\newtheorem{thm}{Theorem}[section]
\newtheorem{lem}[thm]{Lemma}
\newtheorem{cor}[thm]{Corollary}
\newtheorem{conv}[thm]{Convention}
\theoremstyle{definition}
\newtheorem{rem}[thm]{Remark}
\newtheorem{ass}{Assumption}
\newcommand{\bell}{\boldsymbol \ell} 
\newcommand{\bvarphi}{\boldsymbol \varphi}
\newcommand{\bin}[2]{ \left( \hspace{-1mm}
                        \begin{array}{c}
                          #1 \\
                          #2 \\
                        \end{array}
                      \hspace{-1mm} \right) }
\newcommand{\bmu}{\boldsymbol \mu}
\newcommand{\btau}{\boldsymbol \tau}
\newcommand{\bnu}{\boldsymbol \nu}
\newcommand{\bgamma}{\boldsymbol \gamma}
\newcommand{\brho}{\boldsymbol \rho}
\newcommand{\bT}{\mathbf T}
\newcommand{\bu}{\mathbf u}
\newcommand{\bv}{\mathbf v}
\newcommand{\dd}{d}  
\renewcommand{\div}{\operatorname{div}}
\newcommand{\E}{\mathop{{}\mathbb E}}
\newcommand{\law}{\operatorname{law}}
\newcommand{\N}{\mathbb N}
\newcommand{\R}{\mathbb R}
\newcommand{\T}{\mathbb T}
\newcommand{\Z}{\mathbb Z}
\def\softd{{\leavevmode\setbox1=\hbox{d}%
   \hbox to 1.05\wd1{d\kern-0.4ex{\char039}\hss}}}
\def\XXint#1#2#3{{\setbox0=\hbox{$#1{#2#3}{\int}$}
     \vcenter{\hbox{$#2#3$}}\kern-.5\wd0}}
\DeclareFontFamily{U}{mathx}{\hyphenchar\font45}
\DeclareFontShape{U}{mathx}{m}{n}{
      <5> <6> <7> <8> <9> <10>
      <10.95> <12> <14.4> <17.28> <20.74> <24.88>
      mathx10
      }{}
\DeclareSymbolFont{mathx}{U}{mathx}{m}{n}
\DeclareMathAccent{\widecheck}{0}{mathx}{"71}
\newcommand\e{\varepsilon}
\def\bx{\boldsymbol{x}}
\def\by{\boldsymbol{y}}
\def\bX{\bm{X}}
\def\bl{\boldsymbol{\ell}}
\def\bb{\boldsymbol{b}}
\def\bh{{\boldsymbol{h}}}
\def\beff{\bm{f}}
\def\bF{\bm{F}}
\def\bB{\boldsymbol{B}}
\def\cC{\mathcal{C}}
\def\cM{\mathcal{M}}
\def\cP{\mathcal{P}}
\def\cR{\mathcal{R}}
\def\cV{\mathcal{V}}
\def\L{\Lambda_\e}
\def\Lpm{\Lambda_{\e,\pm}}
\def\Nhd{\mathcal{N}}
\newcommand\ds{\displaystyle}
\newcommand\filtF{\mathscr F}
\newcommand\filtM{\mathscr M}
\newcommand\Prob{\mathbb P}
\newcommand\obX{\overline{\bX}}
\newcommand\obx{\overline{\bx}}
\newcommand\omu{\overline \mu}
\newcommand\oX{\overline X}
\newcommand\ox{\overline x}
\newcommand\otau{\overline \tau}
\newcommand\obtau{\overline \btau}
\newcommand\orho{\overline\rho}
\long\def\drop#1{}
\def\RW{(RW_n^\e)}
\def\SDE{(SDE_n)}
\def\FP{(FP_n)}
\def\FPe{(FP_n^\e)}
\def\oSDE{(\overline{SDE})}
\def\MFe{(MF^\e)}
\def\MF{(M F)}
\newlength{\leftstackrelawd}
\newlength{\leftstackrelbwd}
\def\leftstackrel#1#2{\settowidth{\leftstackrelawd}%
{${{}^{#1}}$}\settowidth{\leftstackrelbwd}{$#2$}%
\addtolength{\leftstackrelawd}{-\leftstackrelbwd}%
\leavevmode\ifthenelse{\lengthtest{\leftstackrelawd>0pt}}%
{\kern-.5\leftstackrelawd}{}\mathrel{\mathop{#2}\limits^{#1}}}
\begin{document}

\title{Atomistic origins of continuum dislocation dynamics} 

\author{Thomas Hudson \and Patrick van Meurs \and Mark Peletier}


\maketitle

\begin{abstract}
  This paper focuses on the connections between four stochastic and deterministic models for the motion of straight screw dislocations. Starting from a description of screw dislocation motion as interacting random walks on a lattice, we prove explicit estimates of the distance between solutions of this model, an SDE system for the dislocation positions, and two deterministic mean-field models describing the dislocation density. The proof of these estimates uses a collection of various techniques in analysis and probability theory, including a novel approach to establish propagation-of-chaos on a spatially discrete model. The estimates are non-asymptotic and explicit in terms of four parameters: the lattice spacing, the number of dislocations, the dislocation core size, and the temperature. This work is a first step in exploring this parameter space with the ultimate aim to connect and quantify the relationships between the many different dislocation models present in the literature.
\end{abstract}

\noindent \textbf{Keywords}: {Dislocations, particle system, SDE, mean-field limit, discrete-to-continuum limit.} \\


\section{Introduction}

Plastic deformation of crystals such as metals is a complex phenomenon. It depends crucially on features at widely differing scales, ranging from the thermal motion of individual atoms through the self-organisation of lattice defects to macroscopic aspects of curvature and compatibility.

\emph{Dislocations} are central to plastic deformation; these are curve-like defects in the crystallographic lattice, and their motion is the prime generator of plastic slip~\cite{HirthLothe82}. 
Because of the multi-scale nature of plastic deformation, the literature contains a wide range of models that describe the motion of dislocations. 
At an atomic scale, the motion of a dislocation line is the net result of thermal atomic motion and the stress state of the crystal lattice; models at this scale take into account all atomic positions and momenta~\cite{MoriartyVitekBulatovYip02,AnciauxJungeHodappChoMolinariCurtin18}. 
At scales larger than atomic distances, dislocations are described as zero-thickness curves in a continuum elastic medium, and models at this scale (\emph{discrete dislocations}) represent the system in terms of the positions of the dislocation curves~\cite{BulatovCai06,ArsenlisCaiTangRheeOppelstrupHommesPierceBulatov07,Hudson18TR}. 
At even larger scales, \emph{dislocation densities} represent the net effect of many dislocations together~\cite{Acharya01,Groma97,GromaBalogh99,GromaZaiserIspanovity16}. 
Finally, at macroscopic scales, the dislocation densities on different slip systems combine to form a net plastic slip, leading to descriptions in terms of  continuum plasticity~\cite{Naghdi90,Chaboche08}.
In addition to the variation in scale, models also vary significantly in other ways, such as whether the evolution is stochastic and whether dislocations are curved or straight.

It is important to note that in this zoo of different models for the same physical system,  none of the models is derived \emph{ab initio;} all are \emph{phenomenological}, in the sense that certain aspects are postulated rather than derived. This is a necessity given the complexity of the physical system, but it has led to the following core problem in plasticity:
\begin{quote}
\textit{How can one assess the trustworthiness of such theoretical descriptions, or equivalently, how can one determine regions of parameter space in which one can consider them valid?}
\end{quote}

In this paper we prove a number of rigorous results that exactly address this question. In contrast to the approach taken in the literature previously, we prove quantitative estimates which relate the different models we consider, rather than proving convergence statements directly. The benefit of these estimates is they explicitly characterise the discrepancies between models in various regions of parameter space mentioned in the question above, and are moreover stronger, since they can then be used to deduce convergence statements.

Given the complexity of the physical system, we restrict ourselves to models of straight and parallel screw dislocations, with the same Burgers vector up to a sign $\pm1$. This allows us to represent dislocation positions as points in a two-dimensional cross section with a sign attached to each point. 


\bigskip

We study four models in total, and prove the connections between them that are  illustrated by Figure~\ref{fig:thms}. Two of the four models are discrete in space (the top row in Figure~\ref{fig:thms}), while the other two are set in continuous space; along the other axis, two models are stochastic evolutions of a finite number of dislocations (the left column), while the other two are deterministic evolutions of dislocation densities. 

\let\SystemFontSize\large
 \begin{figure}[h]
 \centering
   \begin{tikzpicture}[scale=.8, >= latex]
     \def \a {10} 
     \def \b {5} 
     \def \c {1.5} 
     \def \d {1} 

     \fill[white!95!black] (-4,-1) rectangle (\a + 1.5, 1);
     \fill[white!95!black] (-4,0) circle (1);
     \fill[white!95!black] (\a+1.5,0) circle (1);
     
     \begin{scope}[shift={(0,-\b)}]
       \fill[white!95!black] (-4,-1) rectangle (\a + 1.5, 1);
       \fill[white!95!black] (-4,0) circle (1);
       \fill[white!95!black] (\a+1.5,0) circle (1);
     \end{scope}
     
     \draw[rounded corners=22, dotted] (-1.5,3.5) rectangle (1.5,-\b-2);
     \draw[rounded corners=22, dotted] (-1.5+\a,3.5) rectangle (1.5+\a,-\b-2);

     \node at (0,0) {\SystemFontSize $\RW$};
     \draw (-\c, -\d) rectangle (\c, \d);
     \draw (-3, 0) node {\begin{tabular}{c}
     Spatially \\ discrete
     \end{tabular}};
     \draw (0, 2.5) node {\begin{tabular}{c}
     Stochastic \\ Process
     \end{tabular}};
     
     \draw[->] (-3,-1) -- (-3,-\b+1) node[midway, left] {\begin{tabular}{c}
       Discrete--to-- \\ Continuum \\ limit
       \end{tabular}};
     \draw[->] (1.5, 2.5) --++ (\a-3,0) node[midway, above] {Mean Field limit};
     \draw[double,<->] (0,-\d) -- (0,-\b+\d) node[midway, anchor=south, rotate=-90]{$\e \to 0$} node[midway, anchor=north, rotate=-90]{Thm.~\ref{t:RWtoSDE}}; 
     \draw[double,<->] (\c,0) -- (\a-\c,0) node[midway, anchor=south]{$n \to \infty$} node[midway, anchor=north]{Thm.~\ref{t:RWtoMFe}}; 
     \draw[double,<->] (\c,-\d) -- (\a-\c,-\b+\d) node[midway, anchor=south, rotate=-23]{$(\e, n) \to (0, \infty)$} node[midway, anchor=north, rotate=-23]{Cor.~\ref{c:RWtoMFvMFe}, \ref{c:RWtoMFvSDE}}; 

     \begin{scope}[shift={(0,-\b)}]
       \node at (0,0) {\SystemFontSize $\SDE$};
       \draw (-\c, -\d) rectangle (\c, \d);
       \draw (-3, 0) node {\begin{tabular}{c}
         Spatially \\ continuous
         \end{tabular}};       
       
       \draw[double,<->] (\c,0) -- (\a-\c,0) node[midway, above]{$n \to \infty$} node[midway, below]{Thm.~\ref{t:SDEtoMF}};
     \end{scope}
  
     \begin{scope}[shift={(\a,0)}]
       \node at (0,0) {\SystemFontSize $\MFe$};
       \draw (-\c, -\d) rectangle (\c, \d);
       \draw (0, 2.5) node {\begin{tabular}{c}
         Mean \\ Field
         \end{tabular}};
       \draw[double,<->] (0,-\d) -- (0,-\b+\d) node[midway, anchor=south, rotate=-90]{$\e \to 0$} node[midway, anchor=north, rotate=-90]{Thm.~\ref{t:MFetoMF}};
     \end{scope}

     \begin{scope}[shift={(\a,-\b)}]
       \node at (0,0) {\SystemFontSize $\MF$};
       \draw (-\c, -\d) rectangle (\c, \d);
     \end{scope}
     
   \end{tikzpicture}
   \caption{Overview of the four models of this paper and the results which connect them.}
   \label{fig:thms}
 \end{figure}
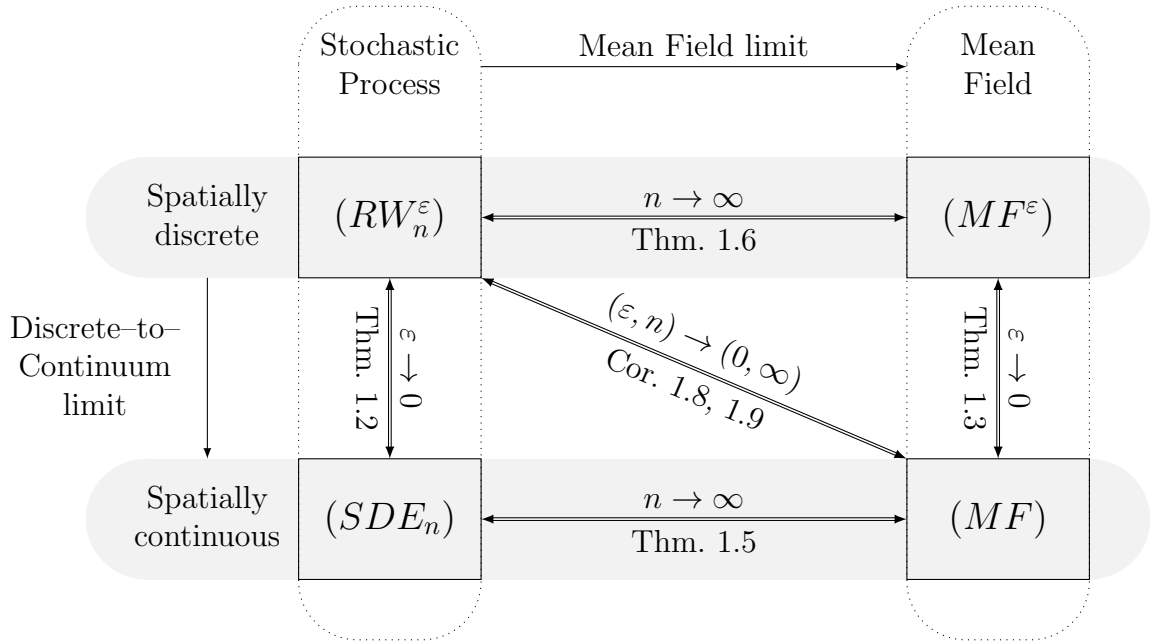 
 
 The model $\SDE$ in the bottom left corner is a stochastic differential equation for~$n$ dislocations in continuous space, with pairwise interaction; single-sign versions of this model have been studied extensively in the community around interacting particle systems~\cite{JabinWang17}. In the bottom right corner, $\MF$ is a mean-field model in continuous space, which corresponds in an appropriate sense to the $n\to\infty$ limit of $\SDE$; the transition from $\SDE$ to $\MF$ is known as `propagation of chaos' and is well studied for the single-sign case with non-singular potentials~\cite{Sznitman91,Philipowski07}.
 
The model  $\RW$ in the top left corner is a random-walk model for $n$ dislocations in a lattice with spacing $\e$. This model appears to be new; following related work on dislocations in lattices~\cite{AlicandroCicalese09,AlicandroDeLucaGarroniPonsiglione14,Hudson17}, we let dislocations jump between the vertices of a lattice, with jump rates that depend on the elastic state of the whole system. 
Finally, in the top right corner, the model $\MFe$ is a mean-field version of this random walk, also in discrete space with lattice spacing $\e$. 
We give a precise definition of these four systems in Section~\ref{sec:models}.

\bigskip
The aim of the paper is to prove the rigorous connections between these four models that are shown in Figure~\ref{fig:thms}. 
The vertical arrows in this figure correspond to estimates of the difference between the laws of $\RW$ and $\SDE$ (on the left) and the difference between the solutions of $\MFe$ and $\MF$ (on the right). For Theorem~\ref{t:RWtoSDE}, this estimate is of the form
\[
\bigl\|\law \RW(t) - \law \SDE(t)\bigr\|\leq f\Bigl(\bigl\|\law\RW(0) - \law \SDE(0)\bigr\|,\e,n,\beta,\delta,t\Bigr),
\]
for some function $f$, and for Theorem~\ref{t:MFetoMF} the structure is similar. For finite values of all the parameters in the argument list of $f$ above, such estimates bound the deviation between the two models; in addition, in certain parameter limits the function $f$ vanishes, implying convergence.

The estimates of Theorems~\ref{t:RWtoSDE}  and~\ref{t:MFetoMF} can be interpreted as convergence results in numerical analysis; for instance, the law of $\SDE$ satisfies a partial differential equation, and the law of $\RW$ satisfies an equation that can be interpreted as a finite-difference discretization.  As a consequence  the method of proof that we use follows the established method due to Lax of combining stability with consistency results.


The two horizontal arrows, on the other hand, indicate estimates of the distance between the solutions of the interacting-particle systems on the left and their mean-field limits on the right. These estimates are of the form 
\[
\mathbb E\bigl\|\mathrm{left}(t) - \mathrm{right}(t) \bigr\| \leq f\Bigl(\bigl\|\mathrm{left}(0) - \mathrm{right}(0)\bigr\|,\e,n,\beta,\delta,t\Bigr).
\]
These estimates are proved by establishing  propagation of chaos in the particle systems $\RW$ and $\SDE$. For $\SDE$ this is a modification of a well-known argument by Sznitman, while for $\RW$ the method of proof appears to be new. 

\medskip
In all of these estimates, the right-hand side $f$  is an explicit function of the initial data and the parameters. The following four parameters play a central role:
\begin{enumerate}[label=(\roman*)]\itemsep0pt
  \item $\e$, the lattice spacing, 
  \item $n$, the number of dislocations, 
  \item $\beta$, the inverse temperature, and 
  \item $\delta$, the size of the dislocation core.  
\end{enumerate}
The parameter $\beta$ characterizes the size of the noise in the two random models $\RW$ and $\SDE$; in the limit $\beta\to\infty$ the noise vanishes. The parameter $\delta$ characterizes the scale at which the interaction between dislocations changes from that of continuum elasticity (at long range) to discrete elasticity (at short range); in a mathematical sense $\delta$ is the scale of regularization of the interaction potential.

\medskip

As consequences of the estimates that we prove, various new convergence statements can be derived. Examples are 
\begin{itemize}
\item $\RW\to \MFe$ as $n\to\infty$, with $\delta = \delta_n\to0$ sufficiently slowly;
\item $\RW\to\MF$ as $n\to\infty$ and $\e = \e_n\to0$, with $\delta_n\to0$ and $\beta_n\to\infty$ sufficiently slowly;
\item $\SDE\to \MF$ as $n\to\infty$, with $\delta_n\to0$ and $\beta_n\to\infty$ sufficiently slowly.
\end{itemize}
We explain these and other consequences in more detail in Section~\ref{sec:convergence-results}. In particular, we highlight that the limiting model $\MF$ is the one developed in \cite{Groma97,GromaBalogh99}. This fundamental model has been used as the basis for many more advanced dislocation density models, and our estimates give a new interpretation of this model as the continuum description of the atomistic and microscopic models $\RW$ and $\SDE$. We make this connection more precise in Section \ref{sec:context}.

\medskip
Section~\ref{sec:main-results} below gives the precise statements of the main theorems of this paper. We first define the discrete and continuous configurations that we will be working with in Section~\ref{sec:notation}, and we specify the dynamics of the four models in Section~\ref{sec:models}.

\subsection{Setting}
\label{sec:notation}


\subsubsection{Configuration spaces}
\label{sec:configuration-spaces}

\paragraph{Continuum configuration space.} For the spatially-continuous models $\SDE$ and $\MF$ we consider the flat torus $\T^2\cong\R^2/\Z^2$ as the spatial domain. This has the advantage that  complications at the boundary and `at infinity' are avoided. We often identify $\T^2$ with translated copies of $Q=[-\tfrac12,\tfrac12)^2$. For $x, y \in \T^2$, we define the metric
\begin{equation*}
  d_{\T^2} (x, y) := \min_{k \in \Z^2} |x-y+k|,
\end{equation*}
where $|\cdot|$ is the Euclidean norm in $\R^2$.

We denote the positions of individual dislocations by $x\in\T^2$ and the positions of $n \geq 2$ dislocations by
\begin{equation*}
  \bx=(x_1,x_2,\dots,x_n) \in (\T^2)^n \cong \T^{2n}.
\end{equation*}
Here and throughout, we will use boldface symbols to distinguish objects which relate to systems of  $n$ dislocations. 

\paragraph{Discrete configuration space.} The spatially discrete models $\RW$ and $\MFe$ are set in the cubic lattice $\L = (\e\Z^2)/ \Z^2 \subset \T^2$, where the atomic lattice spacing $\varepsilon > 0$ is such that $\frac1\varepsilon \in \N$ to fit it inside the torus.  Since $\L \subset \T^2$,  we can use the metric $d_{\T^2}$ to measure the distance between points in $\L$. (The lattice $\L$ contains the positions of the dislocations; the atoms can be considered to be situated on the vertices of the dual lattice~\cite{ArizaOrtiz05,AlicandroDeLucaGarroniPonsiglione14,Hudson17}). 

To distinguish the positions of the dislocations in the lattice from those in the continuous setting above, we write $\ell\in\L$ for a dislocation position on the lattice, and
\begin{equation*}
  \bl=(\ell_1,\ell_2,\dots,\ell_n) \in \L^n := (\L)^n,
\end{equation*}
for the list of positions of $n$ dislocations. 

\paragraph{Lattice increments and difference operators.}
In the lattice model $\RW$ dislocations are assumed to jump at random times to neighbouring lattice sites. The set of directions to neighbouring sites is taken to be
\begin{equation}
  \label{eq:Nhd}
  \Nhd_\e:=\{\pm\e e_1,\pm\e e_2\}\subset\e\Z^2.
\end{equation}
We  refer to such lattice increments as $h\in\Nhd_\e$. To denote possible spatial increments in the $n$-dislocation configuration space, we define
\begin{equation}
  \label{eq:Nhdn}
  \Nhd_\e^n:=\bigcup_{j=1}^n\bigg\{\big(\underbrace{0,\dots,0}_{j-1\text{ times}},h,\underbrace{0,\dots,0}_{n-j\text{ times}}\big) \,\Big|\, h\in\Nhd_\e\bigg\}\subset\e\Z^{2n}.
\end{equation}
We write $\bh$ for an element of $\Nhd^n_\e$.
For a function $f$ defined on $\L$ or $\L^n$ and increments $h\in\Nhd_\e$ and $\bh\in\Nhd_\e^n$, we define the finite-difference operators
\begin{equation*}
  D_h f(\ell) := \frac{f(\ell+h)-f(\ell)}{\e}
  \quad \text{and} \quad
  D_\bh f(\bl) := \frac{f(\bl+\bh)-f(\bl)}{\e},
\end{equation*}
where we have used that $\e = |h| = |\bh|$.

\paragraph{Burgers vectors and extended configuration spaces.}
We assign to each dislocation a Burgers vector, which we identify by its sign $b\in\{\pm 1\}$.
As above, we define lists of Burgers vector signs by
\begin{equation*}
  \bb = (b_1,b_2,\dots,b_n)\in \{\pm1\}^n.
\end{equation*}
For convenience, we divide the indices labelling dislocations according to the sign of their Burgers vector, defining
\begin{equation} \label{Ipm} 
  I^\pm := \{ i : b_i = \pm1 \} 
  \quad \text{and} \quad
  n^\pm = \# I^\pm.
\end{equation}
We note that $n^+ + n^- = n$.

For convenience we extend the spatial configuration spaces introduced
above by identifying $(x,b)\in\T^2\times\{\pm1\}$ and $(\ell,b')\in\L\times\{\pm1\}$ with points in the spaces
\begin{equation*} 
  \T_\pm^2 := \T^2\times\{\pm1\}\quad\text{and}\quad\Lpm := \L\times\{\pm1\}.
\end{equation*}

\subsubsection{Volume measures and probability distributions}
\label{sec:distributions}

Since our focus is on random models, we will consider distributions of dislocation positions, described by probability measures on the configuration spaces. With this aim, we first introduce reference volume measures with total volume scaled to $1$. Using the $n$--fold tensor product, which is defined for any positive measure $\lambda\in\mathcal{M}_+(\T^2)$ by
\begin{equation*}
  \lambda^{\otimes n}:= \underbrace{\lambda\otimes\dots\otimes\lambda}_{n\text{ times}},
\end{equation*}
the reference volume measures on the spaces $\T^2$ and $\T^{2n}$ are
\begin{equation}\label{eq:nu}
  \nu := \mathcal{L}^2\big|_{\T^2}\quad\text{and}\quad \bnu:=\nu^{\otimes n},
\end{equation}
where $\mathcal{L}^2$ is the two-dimensional Lebesgue measure.
Similarly, for the space $\L$ and
$\L^n$ we set
\begin{equation*} 
  \nu_\e := \e^2\sum_{\ell\in\L}\delta_\ell\quad\text{and}\quad \bnu_\e := \nu_\e^{\otimes n}.
\end{equation*}
%

Next we introduce probability distributions. Since the two models involving $n$ dislocations are stochastic, we denote the related $n$--particle probability distribution by $\bmu\in\mathcal{P}(\T^{2n})$ in the continuous case and by $\bmu_\e\in\mathcal{P}(\L^n)$ in the discrete case.

The two models for the dislocation densities are deterministic, however the dislocation densities are most conveniently described as probability measures too. We write these measures as $\rho \in \cP (\T^2_\pm)$ and $\rho_\e \in \cP (\Lpm)$, and note that they are trivially decomposed as
\begin{equation} \label{rhon:pm}
  \rho = \rho^+\otimes \delta_{+1}+\rho^-\otimes \delta_{-1},\quad\text{where}\quad
  \rho^+,\,\rho^- \in \mathcal{M}_+(\T^2).
\end{equation}

For the limit $n \to \infty$ it will be convenient to work with empirical measures for the particle positions rather than with $\bmu$ and $\bmu_\e$. Given $(\bx,\bb)\in\T^{2n}\times\{\pm1\}^n$ and $(\bl,\bb)\in\L^n\times\{\pm1\}^n$, we define the related empirical measures by
\begin{equation*}
  \rho_n:=\frac1n\sum_{i=1}^n\delta_{(x_i,b_i)}\in \mathcal{P}\big(\T_\pm^2\big)
  \quad\text{and}\quad
  \rho_{n,\e}:=\frac1n\sum_{i=1}^n\delta_{(\ell_i,b_i)}\in \mathcal{P}\big(\Lpm\big).
\end{equation*}
Note that the empirical measures are contained in the same spaces as $\rho$ and $\rho_\e$, respectively. For these empirical measures we employ the same decomposition as in \eqref{rhon:pm}.

For the limit  $n \to \infty$ there is no need to distinguish between the distributions $\rho$ and~$\rho_\e$ and their densities given by the usual Radon--Nikodym derivatives $\frac{d \rho}{d\nu}$ and $\frac{d \rho_\e}{d\nu_\e}$. Yet, for the limit $\e \to 0$, the configuration space changes, and therefore we preserve the explicit distinction between the probability distributions $\bmu$ and $\bmu_\e$ and their densities $\frac{d\bmu_\e}{d\bnu_\e}$ and $\frac{d\bmu}{d\bnu}$. 

\subsection{Models of dislocation motion}\label{sec:models}
We now present the four models of dislocation motion that we consider in this paper. 
\subsubsection{Dislocation interaction potential}
\label{sec:disl-inter-potent}
Dislocation motion is driven by the elastic energy of the solid, which in turn is induced by the combination of external loading and the presence of the dislocations. The optimal modelling would therefore be based on appropriate elastic energies for the discrete and continuum systems. This is currently beyond our reach, however, and we take the second-best option: we disregard external loading, and consider in all models energies of a similar, two-point interaction form.

This form is inspired by  linear elasticity theory, which characterizes the interaction between any two dislocations at distances larger than the dislocation core by an interaction potential $V$, defined by
\begin{equation*}
  -\Delta V(\,\cdot\,) = \delta_0 - 1\quad \text{on }\T^2.
\end{equation*}
The derivation of $V$ from linear elasticity is well-established, and the related interaction energy is usually called the `renormalised energy' of dislocations \cite{CermelliLeoni06,BM17}; this name is used in analogy with terminology first coined in the study of Ginzburg--Landau vortices \cite{BBH,SSBook}.

An explicit expression for $V$ is available in terms of Jacobi elliptic functions~\cite[Eq.~(1)]{Mamode14}, and the Fourier series of $V$ is given by
\begin{equation*}
  \widehat V_{k} = \left\{ \begin{aligned}
    -\frac{4 \pi^2}{ |k|^2}
    &&&k \in\Z^2\setminus\{0\}  \\
    0
    &&&k = 0.
  \end{aligned} \right.
\end{equation*}
The function $V$ has a logarithmic singularity at the origin, which is related to the representation of the atomic lattice as a continuum elastic solid. We follow the common approach in the literature to regularise this singularity over a length scale~$\delta$. One might interpret the length scale $\delta$ as the size of the dislocation core; unfortunately, however, with the corresponding assumption $\delta\sim \e$ the estimates that we prove in Section~\ref{sec:main-results} diverge as $\e$ and $\delta$ tend to zero. We address the case $\delta\sim \e$ further in Section \ref{sec:context}. 

We denote the resulting regularised potential by $V_\delta$. The notion of `regularising over the length scale $\delta$' is made precise by the following standing assumption: 

\begin{ass} \label{ass:Vd}
  For each $\delta>0$ the function $V_\delta$ is of class $C^5$, and there exists a constant $\mathsf C_V>0$  such that
  \begin{equation*} 
    \| V_\delta \|_\infty \leq \mathsf C_V \log \frac1\delta 
    \quad \text{and} \quad
    \| d^k V_\delta \|_\infty \leq \frac {\mathsf C_V} {\delta^k}
    \quad \text{for } k = 1,\ldots ,5,
  \end{equation*}
  where $d^k$ is the $k^\mathrm{th}$ order derivative (see Appendix~\ref{app:function-spaces}).
\end{ass}

One possible method to obtain $V_\delta$ is to use a higher-order linear theory of elasticity as in \cite{LM05}; another common choice is to use a mollification $V_\delta = V *\varphi_\delta$ as in \cite{CaiArsenlisWeinbergerBulatov06}. In the latter case, admissible mollifiers $\varphi_\delta$ are non-negative smooth functions which vanish on $\T^2\setminus B_\delta(0)$, scale as $\varphi_\delta (x) := \delta^{-2} \varphi_1 (x/\delta)$, and have unit mass, i.e.
$\int_Q\varphi_\delta = 1$. 

We choose to rescale the total energy in such a way as to ensure that it remains bounded as the number of dislocations tends to infinity. Given a collection of dislocations, the rescaled interaction energy $E_n:\T^{2n}\times \{\pm1\}^n\to\R$ is given by (see~\cite{GarroniVanMeursPeletierScardia19DOI})
\begin{equation*} 
  E_n (\bx,\bb)
  := \frac{1}{n^2}\sum_{i=1}^n \sum_{j=1}^{i-1} b_i b_j V_\delta (x_i - x_j).
\end{equation*}
The rescaled force acting on dislocation $i$ is
\begin{equation} \label{Fi}
  F_i(\bx,\bb) := - n\nabla_{x_i} E_n (\bx,\bb)= - \frac{b_i}{n} \sum_{j\neq i} b_j \nabla V_\delta (x_i - x_j).
\end{equation}
We assemble these individual forces into a configurational force
\begin{equation} \label{F}
  \bF(\bx,\bb) =-n\nabla E_n (\bx,\bb)
  = \biggl(- \frac{b_i}{n} \sum_{j\neq i} b_j \nabla V_\delta (x_i - x_j)\biggr)_{i=1,\dots,n}.
  \end{equation}

\subsubsection{The spatially--discrete random process \texorpdfstring{$\RW$}{RWne}}
\label{s:RW}
We now define the four systems that we consider. 
Model $\RW$ is a continuous--time random Markov process for the
motion of $n$ dislocations in the discrete space~$\L$. The state~$ \bX_\e(t)$ at time $t$ of this process is the vector of dislocation positions $(X_{\e,1}(t),\dots , X_{\e,n}(t))$ $\in \L^n$; the state jumps at random times from a  position $\bl=\bX_\e(t)$
to one of the neighbouring positions $\bl+\bh\in\L^n$ with $\bh\in\Nhd^n_\e$. The jump times are independently and exponentially distributed with rate 
\begin{equation} \label{RW:Rates}
  \mathcal{R}^\e_{n,\bh} (\bl,\bb) 
  := \frac1{\beta\e^2} \exp\Big(\tfrac12\beta \bh \cdot \bF (\bl,\bb) \Big),
\end{equation}
where $\bF$ is the dislocation interaction force defined in \eqref{F}. 


This model with exponential and independent transition times represents a simplified model of a vibrating crystal lattice. In such a lattice, dislocations exist as local minima of the atomistic energy, as demonstrated in \cite{AlicandroDeLucaGarroniPonsiglione14,HudsonOrtner14,HudsonOrtner15,Hudson17}. As a result of thermal fluctuations, a dislocation may overcome the local energetic barrier and move to an adjacent local minimum in any of the directions $h \in \Nhd_\e$. The barrier level varies with the stress near the dislocation, which is reflected in \eqref{RW:Rates} by the dependence on the force $\bF$; for a derivation of~\eqref{RW:Rates}, see~\cite[Sec.~1.6]{BonaschiPeletier16}.

As described above, each dislocation $X_{\e,i}(t)$, $i=1,\dots,n$  has a sign $b_i\in\{\pm1\}$. We assume that the vector of signs $\bb$ is fixed for all time, following the principle that the Burgers vectors of dislocations are conserved \cite{HirthLothe82,HullBacon11}. 
At the initial time we assume that  $\bX_\e$ is randomly distributed according to some distribution, with the initial positions denoted by $\bX_\e^\circ\in \L^n$.

\subsubsection{The spatially--continuous random process \texorpdfstring{$\SDE$}{SDEn}}
\label{s:SDE}

In model $\SDE$, the dislocations are points $\bX = (X_1,\dots,X_n)$ in the continuous spatial domain $\T^{2n}$, and the motion is given by the following family of SDEs on $\T^{2n}$,
\begin{equation} \label{SDE}
  \SDE \qquad 
  d\bX(t)
  = \bF (\bX(t),\bb) \,dt + \sqrt{2\beta^{-1} }\,d\bB(t),
  \quad t \in (0,T),
\end{equation}
where $\bB$ is a $2n$--dimensional Brownian motion. We note that this SDE can be interpreted in the It\^o sense through the identification of $\T^{2n}$ as $(\R^2/\Z^2)^n$, and since $\bF$ is globally Lipschitz, it has well-defined strong solutions.

As in the case of the spatially discrete process $\RW$ described above, we take the Burgers vectors $\bb$ for a given realisation of $\{\bX(t)\}_{t\geq0}$ to be fixed in time. As before, we assume that the initial positions are random, and are denoted by $\bX^\circ\in\T^{2n}$.


\subsubsection{The spatially-discrete mean-field model \texorpdfstring{$\MFe$}{MFe}}
\label{sec:disc-MF}
Model $\MFe$ is a mean-field model which describes the deterministic evolution of the one-particle distribution of dislocations $\rho\in\mathcal{P}\big(\Lpm\big)$, where we recall the definition of the extended configuration space $\Lpm$ from \S\ref{sec:configuration-spaces}. For any $(x,b)\in\Lpm$, $h\in\Nhd_\e$ and $\rho\in\mathcal{P}\big(\Lpm\big)$, the transition rate from $(x,b)$ to $(x+h,b)$ is taken to be
\begin{equation} \label{RW-MF:Rates}
  \mathcal{R}^\e_h (x,b;\rho) 
  := \frac1{\e^2 \beta} \exp\Big(\tfrac12\beta h\cdot  F (x,b;\rho) \Big)
\qquad
 F (x,b;\rho) := 
 - b \,\nabla V_\delta * \big( \rho^+ -\rho^-\big) (x),
\end{equation}
where we recall that $\rho^\pm$ are defined by the decomposition in \S\ref{sec:distributions};
this expression should be compared with the rates for $\RW$, as given in \eqref{RW:Rates}.
The spatially-discrete mean-field model which describes the resulting evolution of $\rho$ is then
\begin{equation} \label{RW:FP-MF}
  \MFe \qquad
\begin{cases}
\ds
\partial_t  \rho^+_\e 
  = \e \sum_{h\in \Nhd_\e} D_{-h}\Big(\mathcal{R}^\e_h (\,\cdot\,,+1;\rho_\e)\rho_\e^+\Big)\\
\ds
\partial_t \rho^-_\e 
  = \e \sum_{h\in \Nhd_\e} D_{-h}\Big(\mathcal{R}^\e_h (\,\cdot\,,-1;\rho_\e)\rho_\e^-\Big)
\end{cases}
\quad \text{on }  \L\times (0, T).
\end{equation}
We suppose that initially the one particle distribution is given by the deterministic initial condition $\rho^\circ_\e\in\mathcal{P}\big(\Lpm\big)$.

\subsubsection{The spatially-continuous mean--field model \texorpdfstring{$\MF$}{MF}}
\label{sec:cont-MF}
The spatially-continuous mean-field model $\MF$ is the continuum analogue of $\MFe$, and is given as the solution to the PDE system
\begin{equation} \label{MF}
  \MF \qquad
\begin{cases}
\ds
\partial_t  \rho^+
  = -\div \big(\rho^+\,F(\,\cdot\,,+1;\rho)\big)+\beta^{-1}\Delta\rho^+\\[2mm]
\ds
\partial_t  \rho^-
  = -\div \big(\rho^-\,F(\,\cdot\,,-1;\rho)\big)+\beta^{-1}\Delta\rho^-
\end{cases}
\quad \text{on }  \T^2 \times (0, T),
\end{equation}
where $F$ is as defined in \eqref{RW-MF:Rates}. Taking $\beta$ large and replacing $V_\delta$ with $V$, $\MF$ was introduced in \cite{CannoneElHajjMonneauRibaud10} as a viscosity approximation of the model in \cite{GromaBalogh99}. As in the case of $\MFe$, we suppose that at initial time the distribution is prescribed deterministically, and is denoted $\rho^\circ\in\mathcal{P}\big(\T_\pm^2\big)$.

\subsubsection{Model parameters and standing assumptions}
\label{sec:model-param-stand}

  We now review the various parameters and their interpretation.
  \begin{itemize}
  \item $\e>0$ is the ratio of the lattice spacing in the atomistic model relative to the domain size; we recall that $\T^2$ is the macroscopic reference domain with side length $1$, and so we assume that $\frac1\e \in \N$. In this case, we note that $\e^{-2}$ is the number of lattice sites, and $\e^2$ is the volume per lattice site.
  \item $\delta>0$ is the length scale in the approximate interaction potential $V_\delta$ taken relative to the domain size; this may be viewed as the `core radius' of the dislocations.
    We assume that $\e\leq\delta \leq1$; the results that we prove require $\e\ll\delta$ to be useful.
  \item $n=n^++n^-\in\N_+$ is the total number of dislocations in the reference domain, where $n^+ \geq 0$ and $n^- \geq 0$ denote the number of positive and negative dislocations respectively.
  \item $\beta>0$ is the inverse temperature of the dislocations in the system, i.e.\ the mean kinetic energy per dislocation in the crystal is assumed to be $\beta^{-1}$.
  \item $T\geq 1$ is the end time of the dynamics, which will be fixed throughout.
  \end{itemize}
  Apart from Assumption~\ref{ass:Vd} and the natural assumptions on the parameters described above, the only technical limitation on the physical parameters we make in order to prove our results is the following.

  \begin{ass}\label{ass:beta-e-delta-bound} 
    We assume that there are fixed constants $C$ and $C'$ such that
    \[0< C\leq\beta \leq \frac{C' \delta}{\e}<+\infty.\]
  \end{ass}
We can interpret this assumption physically as ensuring that the temperature of the system cannot become arbitrarily high, which would lead to $\beta\to0$, nor can it be too low relative to the scale of the lattice spacing; when $\e\ll \delta$, the latter still allows for the 0 temperature limit $\beta \to \infty$.

 Since we seek results which take account of all of the parameters above, we introduce the following convention to clarify this dependence and simplify the statement of our main results.

 \begin{conv}[Polynomial boundedness]\label{conv1}
   A quantity $Q(\alpha_1, \ldots, \alpha_K) \geq 0$ is said to be \emph{polynomially bounded in the parameters }$\alpha_1, \ldots, \alpha_K > 0$ if there exist constants $C > 0$ and $p_1, \ldots, p_k \geq 0$ independent of $\alpha_1, \ldots, \alpha_K$ such that
\begin{equation} \label{pol:growth}
  Q(\alpha_1, \ldots, \alpha_K) \leq C \prod_{k=1}^K \alpha_k^{p_k}
  \quad \text{for all } \alpha_1, \ldots, \alpha_K \text{ large enough}.
\end{equation}
\end{conv}

\subsection{Main results}
\label{sec:main-results}
With the four models identified, we now present our main results, which give estimates of the distance between solutions of the models as a function of the parameters. As mentioned above, Figure~\ref{fig:thms} summarizes these results.

\subsubsection{Discrete-to-continuum estimates}
Our two discrete--to--continuum estimates establish bounds on $L^2$ distances between the laws of the corresponding models. The first of these connects the law of the random walk model $\RW$ introduced in \S\ref{s:RW} with the law of the SDE model $\SDE$ introduced in \S\ref{s:SDE}.

\begin{restatable}[$\RW \leftrightarrow \SDE$]{thm}{thmRWSDE} \label{t:RWtoSDE}
  Let $V_\delta,\e, n, \delta, \beta$ and $T$ satisfy  Assumptions \ref{ass:Vd}--\ref{ass:beta-e-delta-bound}. Let $\bb\in\{\pm1\}^n$ be a fixed collection of Burgers vectors. 
  Suppose that $\bmu_\e: [0,T]\to\mathcal P\big(\L^n\big)$ is
  the law of $\{\bX_\e(t)\}_{t\geq 0}$ evolving under $\RW$
  for some choice of initial conditions
  $\bX_\e^\circ$ with law $\bmu_{\e}^\circ \in\mathcal{P}(\L^n)$.
  Suppose also that $\bmu: [0,T]\to\mathcal P(\T^{2n})$ is
  the law of $\{\bX(t)\}_{t\geq 0}$ evolving under $\SDE$ for some choice of initial conditions
  $\bX^\circ$ with law $\bmu^\circ\in\mathcal{P}(\T^{2n})$.
  
  If $\| \frac{d\bmu^\circ}{d\bnu} \|_{4,\infty}$ is polynomially bounded in $\beta, n, \delta^{-1}$ (see \eqref{pol:growth}), then
    \begin{multline}
      \bigg\|  \frac{d\bmu_\e(t)}{d\bnu_\e}-\frac{d\bmu(t)}{d\bnu}\bigg\|_{L^2(\bnu_\e)}
      \leq
    \bigg\|  \frac{d\bmu_{\e}^\circ }{d\bnu_\e}-\frac{d\bmu^\circ}{d\bnu}\bigg\|_{L^2(\bnu_\e)} \mathrm{e}^{C n \beta \delta^{-2} t}
    + C'' \e^2 \mathrm{e}^{C' n^2 \beta \delta^{-2} T} \\
    \quad \text{for all } t\in[0,T], 
    \end{multline}
   where $C, C', C'' > 0$ depend only on the constants involved in the polynomial bound on the initial data, and those of Assumptions~\ref{ass:Vd} and \ref{ass:beta-e-delta-bound}.
\end{restatable}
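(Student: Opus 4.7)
I adopt the Lax framework announced in the introduction: combine a consistency estimate for the discrete scheme applied to the continuum density with a discrete $L^2(\bnu_\e)$ stability estimate, and put them together by Gr\"onwall. Write $p(t) := d\bmu(t)/d\bnu$ and $p_\e(t) := d\bmu_\e(t)/d\bnu_\e$. Then $p$ solves the linear Fokker--Planck equation
\[
\partial_t p = \mathcal{L}^\dagger p := -\div\bigl(p\,\bF(\,\cdot\,,\bb)\bigr) + \beta^{-1}\Delta p \qquad \text{on } \T^{2n},
\]
while $p_\e$ solves the forward Kolmogorov equation $\partial_t p_\e = \mathcal{L}_\e^\dagger p_\e$ on $\L^n$, with
\[
\mathcal{L}_\e^\dagger q(\bl) := \sum_{\bh\in\Nhd_\e^n}\!\bigl[\mathcal{R}_{n,-\bh}^\e(\bl+\bh,\bb)\,q(\bl+\bh) - \mathcal{R}_{n,\bh}^\e(\bl,\bb)\,q(\bl)\bigr].
\]
Setting $\tilde p(t) := p(t)|_{\L^n}$ and $e_\e := p_\e - \tilde p$, we obtain $\partial_t e_\e = \mathcal{L}_\e^\dagger e_\e + R_\e$ with local truncation error $R_\e := \mathcal{L}_\e^\dagger \tilde p - (\mathcal{L}^\dagger p)|_{\L^n}$.

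\textbf{Consistency.} First I would propagate parabolic regularity in the form
\[
\|p(t)\|_{W^{4,\infty}(\T^{2n})} \leq \|p^\circ\|_{W^{4,\infty}}\,\exp\bigl(C\,\mathrm{poly}(n,\beta,\delta^{-1})\,t\bigr)
\]
by differentiating the Fokker--Planck equation up to four times and applying standard energy/Gr\"onwall estimates. The bounds $\|d^k V_\delta\|_\infty \leq \mathsf{C}_V \delta^{-k}$ (Assumption~\ref{ass:Vd}), together with the fact that $\bF$ in \eqref{F} is a single sum of pairwise interactions (so most mixed partials of $\bF$ vanish), keep the prefactors polynomial in $n$. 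By Assumption~\ref{ass:beta-e-delta-bound} the exponent $\tfrac12\beta\bh\cdot\bF$ in \eqref{RW:Rates} is $O(1)$, so I may Taylor-expand both $\exp\bigl(\tfrac12\beta\bh\cdot\bF(\bl\pm\bh)\bigr)$ and $\tilde p(\bl\pm\bh)$ in $\e$. Pairing the $\pm\bh$ contributions in the symmetric neighbourhood $\Nhd_\e^n$ causes the odd-order terms to cancel and reproduces $\mathcal{L}^\dagger p$ at leading order, leaving an $O(\e^2)$ remainder controlled by derivatives of $\bF$ and $p$ of order $\leq 4$. This gives $\|R_\e(s)\|_{L^2(\bnu_\e)} \leq C\e^2\,\mathrm{poly}(n,\beta,\delta^{-1})\,\|p(s)\|_{W^{4,\infty}(\T^{2n})}$.

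\textbf{Stability.} For any $q:[0,T]\to\R^{\L^n}$ solving $\partial_t q = \mathcal{L}_\e^\dagger q + r$, I differentiate $\|q(t)\|_{L^2(\bnu_\e)}^2$ in time and use that $(\mathcal{L}_\e^\dagger)^* = \mathcal{L}_\e$, the backward generator. The algebraic identity $a(b-a) = \tfrac12(b^2-a^2) - \tfrac12(b-a)^2$, applied inside the double sum in $\langle \mathcal{L}_\e q, q\rangle_{L^2(\bnu_\e)}$, isolates a non-positive Dirichlet-form-type term that may be discarded and a transport-like remainder which, after shifting $\bl \mapsto \bl - \bh$, equals $\tfrac12\sum_{\bh,\bl}\e^{2n}\,[\mathcal{R}_{n,\bh}^\e(\bl-\bh,\bb) - \mathcal{R}_{n,\bh}^\e(\bl,\bb)]\,q(\bl)^2$. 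Pairing $\pm\bh$ and Taylor-expanding the rates, the leading coefficient is $-\tfrac12\div\bF(\bl)$; using $\|\div\bF\|_\infty \leq Cn\delta^{-2}$ and Assumption~\ref{ass:beta-e-delta-bound} to absorb the higher Taylor corrections, Gr\"onwall yields
\[
\|q(t)\|_{L^2(\bnu_\e)} \leq \|q(0)\|_{L^2(\bnu_\e)}\,e^{Kt} + \int_0^t e^{K(t-s)}\|r(s)\|_{L^2(\bnu_\e)}\,ds, \qquad K := Cn\beta\delta^{-2}.
\]

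\textbf{Conclusion and main obstacle.} Applying this stability inequality to $e_\e$ with $r = R_\e$ and invoking the two estimates above, the homogeneous part contributes the first term $\|e_\e(0)\|_{L^2(\bnu_\e)}e^{Kt}$, and the integral $\e^2\int_0^t e^{K(t-s)}\,\mathrm{poly}\cdot e^{C''\mathrm{poly}\cdot s}\,ds$ collapses into a single $C''\e^2 e^{C'n^2\beta\delta^{-2}T}$ factor (the $n^2$ reflecting the regularity exponent, which dominates $K$). The most delicate step is the stability estimate: since $\mathcal{L}_\e^\dagger$ is non-self-adjoint, keeping $K$ bounded as $\e \to 0$ relies on both the $\pm\bh$ cancellations in the non-reversible part of $\mathcal{L}_\e$ \emph{and} the balance condition $\beta\e \leq C'\delta$ of Assumption~\ref{ass:beta-e-delta-bound}; without either, an $\e^{-1}$ factor would appear in $K$ and destroy the estimate. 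A secondary technical difficulty is propagating $W^{4,\infty}$-regularity with constants polynomial (rather than exponential) in $n$, which is where the pairwise structure of the interaction in \eqref{F} is essential.
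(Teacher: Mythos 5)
Your high-level plan is the same as the paper's: write both models as Fokker--Planck equations, view $\FPe$ as a spatial discretisation of $\FP$, prove consistency (Taylor expansion with $\pm\bh$ cancellations) and discrete $L^2(\bnu_\e)$ stability (Dirichlet-form term discarded, transport remainder of size $n\beta\delta^{-2}$), and close with Gr\"onwall. Your stability computation is organised slightly differently from the paper's --- you apply $a(b-a)=\tfrac12(b^2-a^2)-\tfrac12(b-a)^2$ to the full jump generator, whereas the paper first Taylor-expands the rates into operators $L_{\e,0}, L_{\e,1}, \dots$ and treats each piece --- but both routes isolate the same non-positive Dirichlet form and the same $O(n\beta\delta^{-2})$ prefactor, and your truncation-error bound matches the paper's Lemma~\ref{l:consis}.

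The one place where your proposal has a genuine gap is the regularity propagation $\|p(t)\|_{W^{4,\infty}(\T^{2n})}\le \|p^\circ\|_{W^{4,\infty}}\,e^{C\,\mathrm{poly}\cdot t}$. You say you would obtain it ``by differentiating the Fokker--Planck equation up to four times and applying standard energy/Gr\"onwall estimates,'' but energy estimates yield $L^2$- or $H^k$-control, not $L^\infty$, and on $\T^{2n}$ one cannot reach $L^\infty$ from $H^k$ by Sobolev embedding without a number of derivatives that grows with $n$ --- which Assumption~\ref{ass:Vd} does not supply. A maximum-principle argument for $\partial^\alpha p$ is also delicate because $\partial^\alpha p$ is signed and the lower-order coupling terms do not have the right structure. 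The paper instead writes $\beff(t)$ in mild (Duhamel) form against the torus heat kernel $\Phi_\beta$, bounds $\|\partial_{i,j}\Phi_\beta(t)\|_{L^1(\bnu)}\le\sqrt{4\beta/(\pi t)}$ (Lemma~\ref{l:GradPhiBeta}), and then iterates a Gr\"onwall inequality with a $(t-s)^{-1/2}$ kernel (Lemma~\ref{l:Gron:frac}); this gives exactly the $W^{4,\infty}$ bound with $\gamma=16\mathsf C_V^2\beta n^2/\delta^2$ in the exponent that you want. Your remaining observations --- the $n^2$ in the error exponent coming from the regularity constant dominating the stability constant $K\sim n\beta\delta^{-2}$, and the role of $\beta\e\lesssim\delta$ in keeping the rate expansion under control --- are correct and match the paper.
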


\noindent
See Appendix~\ref{app:function-spaces} for the definition of norms such as $\|\cdot\|_{4,\infty}$.

\medskip

Our second main result resembles the one above; it relates the
mean--field models $\MFe$ and $\MF$.

\begin{restatable}[$\MFe \leftrightarrow \MF$]{thm}{thmMFMFe} \label{t:MFetoMF}
  Let $V_\delta,\e, \delta, \beta$ and $T$ satisfy Assumptions~\ref{ass:Vd}--\ref{ass:beta-e-delta-bound}.
  Suppose that $\rho_\e: [0,T]\to \mathcal P\big(\Lpm\big)$ is
  a solution of $\MFe$ for some choice of initial condition $\rho_{\e}^\circ$. 
  Suppose also that $\rho: [0,T]\to\mathcal P\big(\T_\pm^2\big)$ is
  a solution of $\MF$ for some choice of initial condition $\rho^\circ \in C^4(\T^2_\pm)$.
  
  If $\|\frac{d\rho^{\circ,\pm}}{d\nu}\|_{4,\infty}$ is polynomially bounded in $\beta, \delta^{-1}$ (see \eqref{pol:growth}), then
    \begin{multline}
    \sqrt{ \sum_\pm\left\| \frac{d\rho_\e^\pm(t)}{d\nu_\e}-\frac{d\rho^\pm(t)}{d\nu}\right\|_{L^2(\nu_\e)}^2 }
    \leq \left(
           \sqrt{ \sum_\pm\left\|\frac{ d\rho_{\e}^{\circ,\pm} }{d\nu_\e} - \frac{d\rho^{\circ,\pm} }{d\nu} \right\|^2_{L^2(\nu_\e)} }
           + C'' \e^2 \exp \big( C' \beta \delta^{-2} T \big) \right) \\
         \times
  \exp \left( C K \exp \big( 2^5 \mathsf C_V^2 \beta \delta^{-2} t \big) \right) \quad \text{for all } t\in[0,T], 
    \end{multline}
    where 
    \begin{equation}
      K 
      := \beta \delta^{-4} \sum_{\pm} \bigg( \delta^{-1} \Big\| \frac{d\rho^{\circ,\pm} }{d\nu} \Big\|_{1,\infty} + \sqrt{\beta T} \delta^{-4} \Big\| \frac{d\rho^{\circ,\pm} }{d\nu} \Big\|_{\infty} + \beta \bigg),
    \end{equation}
    and $C, C', C'' > 0$ depend only on the constants involved in the polynomial bound on the initial data, and those of Assumptions~\ref{ass:Vd} and \ref{ass:beta-e-delta-bound}.
\end{restatable}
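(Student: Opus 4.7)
The plan is to apply the Lax paradigm (consistency + stability $\Rightarrow$ convergence), regarding $\MFe$ as a finite-difference discretisation of $\MF$. Taylor expanding the exponential rate $\mathcal{R}^\e_h$ and the backward difference $D_{-h}$ around a lattice point $\ell\in\L$, the symmetry of $\Nhd_\e$ causes the constant part $1/(\e^2\beta)$ of the rate to reconstruct $\beta^{-1}\Delta$ after summing over the four neighbours, while the linear part $h\cdot F/(2\e^2)$ reconstructs the drift $-\div(\rho^\pm F)$; the remainder is $O(\e^2)$ times fourth-order spatial derivatives of $\rho^\pm$ and lower-order quantities involving $\nabla^k V_\delta$.

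For the consistency step I would first propagate regularity for $\MF$: since $F(\cdot,\pm1;\rho)=\mp\nabla V_\delta * (\rho^+-\rho^-)$ is a convolution with a smooth kernel, standard parabolic energy estimates applied to $\MF$ propagate $C^4$-bounds of the form $\|\rho^\pm(t)\|_{4,\infty}\leq \|\rho^{\circ,\pm}\|_{4,\infty}\exp(C\beta\delta^{-2}t)$, the $\delta^{-k}$ factors coming from Assumption~\ref{ass:Vd}. Substituting the lattice restriction of $\rho$ into $\MFe$ and invoking this regularity, the Taylor remainder yields a consistency residual $\eta_\e^\pm$ satisfying $\|\eta_\e^\pm\|_{L^2(\nu_\e)}\leq C''\e^2\exp(C'\beta\delta^{-2}T)$, precisely the additive $\e^2$-term in the conclusion.

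For the stability step, set $w^\pm := \frac{d\rho_\e^\pm}{d\nu_\e} - \frac{d\rho^\pm}{d\nu}$; then $w^\pm$ satisfies $\MFe$ up to the source $\eta_\e^\pm$, with the complication that the rates depend nonlinearly on the unknown through $F(\cdot;\rho_\e)$. Testing against $w^\pm$ in $L^2(\nu_\e)$ and performing discrete summation by parts, the constant part of the rate produces a non-positive discrete Dirichlet form that can be discarded. The remaining nonlinear terms pair the difference of rates $\mathcal{R}^\e_h(\cdot;\rho_\e)-\mathcal{R}^\e_h(\cdot;\rho)$ with the reference density $\frac{d\rho^\pm}{d\nu}$; by Assumption~\ref{ass:beta-e-delta-bound}, the exponent $\beta h\cdot F$ is uniformly bounded by $\mathsf C_V\beta\e/\delta \leq C'\mathsf C_V$, so the exponential linearises to $\tfrac12\beta h\cdot(F(\rho_\e)-F(\rho)) = \mp\tfrac12\beta h\cdot\nabla V_\delta*(w^+-w^-)$ modulo quadratic error, and the resulting contribution is controlled through an $L^\infty$-bound on the convolution against $\|w\|_{L^2(\nu_\e)}$. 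Together these ingredients give a differential inequality $\tfrac{d}{dt}\|w\|_{L^2(\nu_\e)}^2 \leq CK\exp(2^5\mathsf C_V^2\beta\delta^{-2}t)\,\|w\|_{L^2(\nu_\e)}^2 + (\text{source term})$, with $K$ precisely the polynomial in $\beta$, $\delta^{-1}$ and initial-data norms appearing in the theorem, arising from propagated $W^{1,\infty}$ and $L^\infty$ bounds on $\rho^\pm$; Gronwall then produces the double-exponential prefactor claimed.

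The main obstacle I anticipate is bookkeeping the interplay between the time-grown regularity of the reference solution and the nonlinear, non-local dependence of $\mathcal{R}^\e_h$ on $\rho$. The doubly exponential structure is unavoidable because the stability rate itself compounds exponentially in time through the propagated $W^{1,\infty}$-bounds of $\rho^\pm$, so every such norm used along the way enters inside the inner $\exp(\cdot t)$ and is then compounded by the outer Gronwall. The delicate point is to arrange the estimates so that the $L^\infty$-bound on $\nabla V_\delta * w$ costs only $\delta^{-2}$ rather than a higher power of $\delta^{-1}$, which is what determines the exact form of $K$ and of the inner exponential $\exp(2^5\mathsf C_V^2\beta\delta^{-2}t)$ in the statement.
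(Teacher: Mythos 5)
Your proposal follows essentially the same Lax-type route as the paper — a splitting into stability, consistency and nonlinearity parts, propagated $C^4$-regularity for $\MF$, and a Gronwall argument whose exponentially growing rate produces the double exponential — and you identify correctly why the bound is doubly exponential.

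There is, however, a concrete misstep in your treatment of the nonlinear term. You write $F(\rho_\e)-F(\rho)=\mp\nabla V_\delta*(w^+-w^-)$ modulo a quadratic error, with $w^\pm:=\frac{d\rho_\e^\pm}{d\nu_\e}-\frac{d\rho^\pm}{d\nu}$. This identity is false: $\rho_\e^\pm$ is a measure on the lattice while $\rho^\pm$ is a measure on $\T^2$, so
\[
\rho_\e^\pm - \rho^\pm = w^\pm\,\nu_\e \;+\; f^\pm\,(\nu_\e-\nu),
\qquad f^\pm := \frac{d\rho^\pm}{d\nu},
\]
and the second term — the quadrature error of the lattice measure against the Lebesgue measure — is linear in $f^\pm$ and is \emph{not} captured by $w^\pm$. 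After convolving with $\nabla V_\delta$ (or $\Delta V_\delta$) this term produces an additional $O(\e^2\|d^2 f^\pm\|_\infty)$ source, which the paper extracts by estimating $\|\rho_\e-\rho\|_{2,\infty}^*$ through the splitting $\rho_\e-\rho=(\rho_\e-f\cV_\e)+(f\cV_\e-\rho)$ together with a tessellation/Taylor estimate on Voronoi cells (see Lemma~\ref{l:T3}). Without accounting for this extra source, your differential inequality for $\|w\|_{L^2(\nu_\e)}$ is missing a term of the same order as the consistency error, and the $\e^2$-prefactor in the theorem does not follow. The fix fits naturally into your framework, but the step as written would fail.
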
 


 \begin{rem}[Small $t$]
 Over short time scales (i.e. when $t \ll 1$) our proof gives sharper estimates than those given in the statements of Theorem~\ref{t:RWtoSDE} and Theorem~\ref{t:MFetoMF}. Indeed, the dependence on the final time $T$ in the exponents in the estimates can be replaced by~$t$ at the cost of a prefactor to the exponential. This prefactor is polynomially bounded in $\beta, n, \delta^{-1}$ for Theorem \ref{t:RWtoSDE}, and polynomially bounded in $\beta, \delta^{-1}$ for Theorem \ref{t:MFetoMF}.
 \end{rem}

\subsubsection{Estimates by mean--field approximations}
The second pair of results  connects particle models for $n$ dislocations to mean-field models; see the horizontal arrows in Figure \ref{fig:thms}. Here, the connection is made through the expectation of the distance between the random empirical measure of the particle system and the solution of the corresponding mean-field model.
The first such result connects the SDE model $\SDE$ to the continuum mean--field model $\MF$.

\begin{restatable}[$\SDE \leftrightarrow \MF$]{thm}{thmSDEMF} \label{t:SDEtoMF}
  Let $V_\delta, n, \delta, \beta$ and $T$ satisfy Assumptions \ref{ass:Vd}--\ref{ass:beta-e-delta-bound}. Let $\rho$ be the solution of $\MF$ for some initial datum $\rho^\circ \in \mathcal P\big(\T_\pm^2\big)$. Fix $\bb\in \{\pm1\}^n$, and let {$\kappa$} be the discrepancy in mass between $\bb$ and $\rho^\circ$,
  \begin{equation}
  \label{def:gamma-mass-disc}
  {\kappa} :=  \left| \int \rho^{\circ,+} - \frac{n^+}n\right|.
  \end{equation}
Let $\bX^\circ = \{ X_i^\circ \}_{1 \leq i \leq n}$ be independent random variables in $\T^2$ with law proportional to $\rho^{\circ,+}$ for $i\in I^+$ and $\rho^{\circ,-}$ for $i\in I^-$. Let $\{\bX_t\}_{0 \leq t \leq T}$ be the stochastic process defined by $\SDE$ with initial datum $\bX^\circ$ and Burgers vectors $\bb$. Then
\begin{equation}
  \E \| \rho_n^\pm (t) - \rho^\pm(t) \|_{1,\infty}^* 
  \leq {\kappa} + C \frac{\log n}{\sqrt n} + 2\mathsf C_V  \Big(\frac1{\sqrt n}  + {\kappa}\Big) \frac t\delta \mathrm{e}^{2\mathsf C_V \delta^{-2} t} 
  \quad \text{for all } t \in [0,T],
\end{equation}
where $\rho_n^\pm$ are the random empirical measures of $(\bX_t, \bb)$ defined in Section \ref{sec:distributions}, and $C>0$ depends only on the initial data and the constants of Assumptions~\ref{ass:Vd} and \ref{ass:beta-e-delta-bound}.
\end{restatable}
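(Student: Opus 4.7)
The plan is to adapt Sznitman's synchronous coupling approach to propagation of chaos, modified to accommodate the two Burgers-vector species with deterministically prescribed signs. I introduce an auxiliary decoupled particle system $\obX = (\overline X_1,\dots,\overline X_n)$ on the same probability space, driven by the \emph{same} Brownian motions $\bB$ as in $\SDE$ and starting from the same initial data $\overline X_i^\circ = X_i^\circ$, but with the pairwise drift replaced by the deterministic mean-field drift evaluated at the PDE solution $\rho(t)$:
\begin{equation*}
  d\overline X_i(t) = F\bigl(\overline X_i(t), b_i; \rho(t)\bigr)\, dt + \sqrt{2\beta^{-1}}\,dB_i(t).
\end{equation*}
Since each equation is decoupled and the linear Fokker--Planck operator attached to this SDE coincides with the $b_i$-component of $\MF$, the processes $\overline X_i$ are mutually independent, and each $\overline X_i$ with $b_i=\pm1$ has law $\rho^{\pm}(t)/\|\rho^{\circ,\pm}\|$ throughout $[0,T]$ (total mass being conserved under $\MF$). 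Denote the corresponding empirical measures by $\overline\rho_n^\pm := \tfrac{1}{n}\sum_{i\in I^\pm}\delta_{\overline X_i}$.

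The triangle inequality reduces the problem to estimating $\E\|\rho_n^\pm - \overline\rho_n^\pm\|_{1,\infty}^*$ and $\E\|\overline\rho_n^\pm - \rho^\pm\|_{1,\infty}^*$ separately. For the first, Kantorovich--Rubinstein duality yields $\E\|\rho_n^\pm - \overline\rho_n^\pm\|_{1,\infty}^* \leq \Delta(t) := \tfrac{1}{n}\sum_i \E|Z_i(t)|$, where $Z_i := X_i - \overline X_i$. Because the two SDEs share the same Brownian motion, $Z_i$ satisfies a pathwise ODE whose drift I split as $\bigl[F_i(\bX,\bb) - F_i(\obX,\bb)\bigr] + \bigl[F_i(\obX,\bb) - F(\overline X_i,b_i;\rho)\bigr]$. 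The first bracket is Lipschitz in positions with constant $\mathsf C_V\delta^{-2}$ by Assumption~\ref{ass:Vd} applied to $\nabla^2 V_\delta$. Rewriting the pairwise sum as $-b_i\nabla V_\delta\ast(\overline\rho_n^+-\overline\rho_n^-)(\overline X_i)$ plus a self-interaction term of size $\mathsf C_V/(n\delta)$ coming from the $j\neq i$ convention in~\eqref{F}, the second bracket equals the convolution fluctuation $b_i\nabla V_\delta\ast\bigl((\rho^+-\rho^-)-(\overline\rho_n^+-\overline\rho_n^-)\bigr)(\overline X_i)$. Removing the atom at $\overline X_i$ from $\overline\rho_n^{b_i}$ and conditioning on $\overline X_i$ makes the remaining sum an i.i.d.\ average of quantities bounded by $\|\nabla V_\delta\|_\infty \leq \mathsf C_V/\delta$; Chebyshev's inequality then bounds this fluctuation pointwise in expectation by $\mathsf C_V\delta^{-1}(\kappa + n^{-1/2})$, where $\kappa$ enters because the conditional mean of $\overline\rho_n^\pm$ equals $(n^\pm/n)\rho^\pm/\|\rho^{\circ,\pm}\|$ rather than $\rho^\pm$. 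Altogether,
\begin{equation*}
  \dot\Delta(t) \leq 2\mathsf C_V\delta^{-2}\Delta(t) + 2\mathsf C_V\delta^{-1}\bigl(\kappa + n^{-1/2}\bigr),\qquad \Delta(0)=0,
\end{equation*}
and the elementary bound $(e^{at}-1)/a \leq t\,e^{at}$ produces the factor $(t/\delta)\exp(2\mathsf C_V\delta^{-2}t)$ appearing in the conclusion.

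For the second triangle-inequality term I use the decomposition $\overline\rho_n^\pm - \rho^\pm = \bigl(\overline\rho_n^\pm - (n^\pm/n)\rho^\pm/\|\rho^{\circ,\pm}\|\bigr) + \bigl((n^\pm/n)/\|\rho^{\circ,\pm}\| - 1\bigr)\rho^\pm$. The second summand has total-variation mass exactly $\kappa$, and the first is controlled by the classical concentration of empirical measures of i.i.d.\ samples in the two-dimensional torus in bounded-Lipschitz (equivalently dual-$W^{1,\infty}$) norm, which supplies the $\log n/\sqrt{n}$ rate. Summing the two contributions completes the estimate. The main obstacle I anticipate is tracking the correct powers of $\delta$ in the source term of the Gronwall inequality: the convolution fluctuation must be bounded using $\|\nabla V_\delta\|_\infty \leq \mathsf C_V\delta^{-1}$ together with the conditional independence of $(\overline X_j)_{j\neq i}$, rather than a naive Lipschitz-in-$W_1$ estimate that would produce $\delta^{-2}$ and destroy the $t/\delta$ scaling. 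A secondary subtlety is maintaining the clean split between the deterministic mass mismatch $\kappa$ and the purely stochastic $n^{-1/2}$ fluctuation across both Burgers species, since the signs $\bb$ are prescribed deterministically rather than sampled from $\rho^\circ$.
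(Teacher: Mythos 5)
Your proposal is correct and follows essentially the same route as the paper: a synchronously coupled auxiliary system $\obX$ driven by the mean-field drift, a triangle-inequality split into a propagation-of-chaos term and a Glivenko–Cantelli term, a Gronwall estimate whose source combines a conditional second-moment bound on the convolution fluctuation ($n^{-1/2}$) with the mass-mismatch contribution ($\kappa$), and the Fournier–Jourdain bound for the auxiliary empirical measure. The only cosmetic differences are that the paper computes $\E|\sum_j g|^2$ directly (rather than invoking ``Chebyshev'') via a zero-mean function $g$ whose off-diagonal cross-terms vanish, and that it handles the mass discrepancy in the Glivenko–Cantelli step by rescaling $\orho_n^\pm$ to match mass rather than by your centering around $(n^\pm/n)\mu^\pm$; both lead to the same $\kappa + C\log n/\sqrt n$ bound.
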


\noindent
The dual bounded-Lipschitz norm $\|\cdot\|_{1,\infty}^*$ is defined in Appendix~\ref{app:function-spaces}.

\medskip

The second of these results connects the random walk model $\RW$ to the discrete mean--field model $\MFe$.

\begin{restatable}[$\RW \leftrightarrow \MFe$]{thm}{thmRWMFe} \label{t:RWtoMFe}
Let $V_\delta,\e, n, \delta, \beta$ and $T$ satisfy Assumptions~\ref{ass:Vd}--\ref{ass:beta-e-delta-bound}. 
Let~$\rho_\e$ be the solution of $\MFe$ for some initial condition $\rho^\circ_\e \in \mathcal P\big( \Lpm \big)$. Fix $\bb\in \{\pm1\}^n$, and let~{$\kappa$} be the discrepancy in mass between $\bb$ and $\rho^\circ_\e$, 
  \begin{equation}
    \label{def:gamma-mass-disc-discrete}
  {\kappa} :=  \left| \int \rho^{\circ,+}_\e - \frac{n^+}n\right|.
  \end{equation}
Let $\bX^\circ_\e = \{ X_{i,\e}^\circ \}_{1 \leq i \leq n}$ be independent random variables in $\L$ with with law proportional to $\rho^{\circ,+}_\e$ for $i\in I^+$ and $\rho^{\circ,-}_\e$ for $i\in I^-$. 
Let $\{\bX_{\e,t}\}_{0 \leq t \leq T}$ be the stochastic process defined by $\RW$ with initial datum $\bX^\circ_\e$ and Burgers vectors $\bb$. Then
\begin{equation}
  \E \| \rho_{\e,n}^\pm (t) - \rho_\e^\pm(t) \|_{1,\infty}^*
  \leq {\kappa} + C'' \frac{\log n}{\sqrt n} + C'  \Big(\frac1{\sqrt n}  + {\kappa}\Big) \frac t\delta \mathrm{e}^{C\delta^{-2} t} 
  \quad \text{for all } t \in [0,T],
\end{equation}
where $\rho_{\e,n}^\pm$ are the random empirical measures of $(\bX_{\e,t}, \bb)$, and $C,\ C',\ C'' > 0$ depend only on the initial data and the constants of Assumptions~\ref{ass:Vd} and \ref{ass:beta-e-delta-bound}. 
\end{restatable}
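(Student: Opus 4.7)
The plan is to follow the classical Sznitman-style propagation-of-chaos blueprint, adapted to the lattice setting. On an enlarged probability space I would construct $n$ independent time-inhomogeneous continuous-time Markov chains $\oX_{\e,1},\ldots,\oX_{\e,n}$ on $\L$, where $\oX_{\e,i}$ jumps with rates $\mathcal R^\e_h(\,\cdot\,,b_i;\rho_\e(t))$; these are genuinely independent because the rates involve only the deterministic mean-field solution $\rho_\e(t)$. I would start each auxiliary particle from $\oX^\circ_{\e,i}=X^\circ_{\e,i}$. Writing $\overline\rho_{\e,n}^\pm$ for the corresponding random empirical measures and using the triangle inequality,
\[
  \E\|\rho^\pm_{\e,n}(t)-\rho^\pm_\e(t)\|^*_{1,\infty}
  \leq \E\|\rho^\pm_{\e,n}(t)-\overline\rho^\pm_{\e,n}(t)\|^*_{1,\infty}
   + \E\|\overline\rho^\pm_{\e,n}(t)-\rho^\pm_\e(t)\|^*_{1,\infty}.
\]

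The second term is a standard concentration estimate for the empirical measure of i.i.d. samples, since conditional on $\bb$ the particles $\{\oX_{\e,i}\}_{i\in I^\pm}$ are i.i.d. with common law of mass $n^\pm/n$ approximating $\rho^\pm_\e$. A covering/Bernstein argument on the unit ball of the bounded-Lipschitz space yields the $C''\log n/\sqrt n$ rate, while the mismatch between $n^\pm/n$ and $\int\rho^{\circ,\pm}_\e$, propagated through the mass-preserving Markov evolution, contributes the $\kappa$ term.

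To control the first term, I would couple the two systems by a graphical Poisson construction: attach to each $(i,h)$ an independent Poisson process of common dominating rate $R_{\max}$ together with auxiliary uniform marks, and drive both $X_{\e,i}$ and $\oX_{\e,i}$ by the thinning rule that accepts a jump iff the mark falls below $\mathcal R^\e_h/R_{\max}$. Under this coupling, at each potential jump time the two chains decouple only by an amount controlled by
\[
  \bigl|\mathcal R^\e_h(x,b;\rho_{\e,n}(t))-\mathcal R^\e_h(y,b;\rho_\e(t))\bigr|,
\]
which by Assumptions~\ref{ass:Vd}--\ref{ass:beta-e-delta-bound} can be bounded by $C\delta^{-2}(d_{\T^2}(x,y)+\|\rho_{\e,n}(t)-\rho_\e(t)\|^*_{1,\infty})$ after expanding the exponential, using $\|\nabla V_\delta\|_\infty\lesssim\delta^{-1}$, $\|\nabla^2 V_\delta\|_\infty\lesssim\delta^{-2}$, and the uniform bound $\beta\e\lesssim\delta$. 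Applying the Dynkin formula to an appropriately chosen test observable (dually paired with the bounded-Lipschitz unit ball) and symmetrising over $i$, I obtain a differential inequality
\[
  u'(t)\leq C\delta^{-2}\bigl(u(t)+v(t)\bigr),
  \qquad u:=\E\|\rho^\pm_{\e,n}-\overline\rho^\pm_{\e,n}\|^*_{1,\infty},\ v:=\E\|\overline\rho^\pm_{\e,n}-\rho^\pm_\e\|^*_{1,\infty}.
\]
Since $v(s)\leq\kappa+C''\log n/\sqrt n$ by the concentration step, Grönwall's lemma together with careful tracking of the $\delta$-scaling produces the claimed factor $(1/\sqrt n+\kappa)\,(t/\delta)\,\mathrm{e}^{C\delta^{-2}t}$, in direct analogy with Theorem~\ref{t:SDEtoMF}.

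The main obstacle will be the design of the graphical coupling and the derivation of the differential inequality in a form compatible with the dual bounded-Lipschitz norm on measures. This is precisely where the argument departs from Sznitman's SDE proof, which uses shared Brownian noise and an Itô-based Grönwall on a strong particle-path distance; in the discrete setting, jumps are discontinuous and the Lipschitz rate bound involves the extra factor $\e^{-1}$ in the acceptance probability, which must be carefully absorbed by combining the $\e$-scale of lattice increments with the bound $\beta\leq C'\delta/\e$ from Assumption~\ref{ass:beta-e-delta-bound} to avoid any blow-up as $\e\to 0$. This absorption, rather than the Gr\"onwall mechanics, is the step where the paper's ``novel approach to propagation-of-chaos on a spatially discrete model'' genuinely enters.
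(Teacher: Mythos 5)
Your coupling is a genuinely different route from the paper's. The paper constructs $\bX_\e$ and $\obX_\e$ via Helms--Kurtz random time changes driven by the \emph{same} unit-rate Poisson processes $N^{\bh}$ evaluated at the respective inhomogeneous clocks $\tau^{\bh}(t)$ and $\otau^{\bh}(t)$, and then applies Doob's optional stopping theorem to the compensated martingale $N^{\bh}(t)-t$ to prove the key identity $\E|N^{\bh}(\tau^{\bh}(t))-N^{\bh}(\otau^{\bh}(t))| = \E|\tau^{\bh}(t)-\otau^{\bh}(t)|$. Your thinning/graphical construction with a common dominating rate $R_{\max}$ reaches essentially the same integral inequality
\[
\E\sum_{i=1}^n|X_{\e,i}(t)-\oX_{\e,i}(t)| \;\leq\; \sum_{\bh\in\Nhd_\e^n}\e\int_0^t\E\big|\mathcal R^\e_{n,\bh}(\bX_\e(s),\bb)-\mathcal R^\e_h(\oX_{\e,i}(s),b_i;\rho_\e(s))\big|\,ds,
\]
since the expected number of discrepancy jumps is exactly the integrated rate-difference and each such jump changes the distance by at most $\e$. (Your remark about the $\e^{-1}$ factor is harmless: the rate-difference is $O(\e^{-1}\delta^{-2}|X-\oX|)$, the jump is of size $\e$, and the $\e$'s cancel automatically; Assumption~\ref{ass:beta-e-delta-bound} is needed mainly to keep the exponential's argument bounded so the Taylor expansion of $\exp$ is valid. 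This cancellation is not the paper's ``novel'' ingredient; the time-change coupling and the Doob identity are.)

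The genuine gap is in your Gr\"onwall step. You plug $v(s)\leq\kappa + C''\log n/\sqrt n$ (the Glivenko--Cantelli bound on $\E\|\overline\rho_{\e,n}^\pm-\rho_\e^\pm\|_{1,\infty}^*$) into the driving term of the differential inequality, and then assert the answer is $(\tfrac1{\sqrt n}+\kappa)\tfrac{t}{\delta}\mathrm{e}^{C\delta^{-2}t}$. That does not follow: Gr\"onwall with a constant source $v$ yields $u(t)\lesssim v\,(\mathrm e^{C\delta^{-2}t}-1)\lesssim (\kappa+\tfrac{\log n}{\sqrt n})\,\tfrac{t}{\delta^2}\,\mathrm e^{C\delta^{-2}t}$, which carries an extra $\tfrac{\log n}{\delta}$ compared to the claim. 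Worse, bounding the empirical-force fluctuation by $\|\overline\rho_{\e,n}-\rho_\e\|_{1,\infty}^*\cdot\|\nabla V_\delta\|_{1,\infty}$ costs $\delta^{-2}$ rather than $\delta^{-1}$, because the test function $y\mapsto\nabla V_\delta(x-y)$ has Lipschitz constant of order $\delta^{-2}$. The paper avoids both losses by estimating the fluctuation term \emph{directly}: it introduces the centered kernel $g(x,y,b)=b[(\nabla V_\delta*\mu_s(\cdot,b))(x)-\nabla V_\delta(x-y)]$ with $\int g(x,\cdot,b)\,d\mu_s=0$, exploits independence of the $\oX_{\e,j}$ to kill all off-diagonal cross terms in $\E|\sum_j g(\oX_i,\oX_j,b_j)|^2$, and concludes by Cauchy--Schwarz that $\E|\tfrac1n\sum_j g|\lesssim\tfrac{\mathsf C_V}{\delta\sqrt n}$ --- no $\log n$, and only one power of $\delta^{-1}$. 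This second-moment cancellation, not the bounded-Lipschitz concentration bound, is what sits inside the Gr\"onwall and produces the $(\tfrac1{\sqrt n}+\kappa)\tfrac{t}{\delta}$ prefactor. You should redo the driving-term estimate in this sharper way; the Glivenko--Cantelli estimate should appear only once, in the second term of the triangle inequality, exactly where you placed it.
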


\begin{rem}[Random $\bb$] \label{r:b}
Since the estimates of Theorems~\ref{t:SDEtoMF} and~\ref{t:RWtoMFe} only depend on $\bb$ through the quantity $\kappa$, the statements readily generalise to the case in which the vector $\bb$ is chosen randomly. As an example, consider the situation  in the context of Theorem~\ref{t:SDEtoMF} where we select the pairs $\{(X_i^\circ,b_i)\}_{i=1,\dots,n}$ randomly and independently from the distribution $\rho^\circ\in \mathcal P(\T_\pm^2)$. By conditioning the distribution of $\{X_i^\circ\}_i$ on  $\bb$, Theorem~\ref{t:SDEtoMF} can be applied to each realization of $\bb$ with a mass discrepancy $\kappa(\bb)$ and with constants that are independent of~$\bb$. After taking a final  expectation over $\bb$ we find the estimate
\begin{equation*}
  \E \| \rho_n^\pm (t) - \rho^\pm(t) \|_{1,\infty}^* 
  \leq \E [\kappa(\bb)] + C \frac{\log n}{\sqrt n} + 2\mathsf C_V  \Big(\frac1{\sqrt n}  + \E[\kappa(\bb)]\Big) \frac t\delta \mathrm{e}^{2\mathsf C_V \delta^{-2} t}   \quad \text{for all } t \in [0,T].
\end{equation*}
Since $n^+$ has a binomial distribution with parameters $n$ and  $p := \int\rho^{\circ,+}$, we can estimate the  expectation of $\kappa$ by
\[
\E [\kappa(\bb)] 
\leq \E \left|p-\frac {n^+}n\right| 
\leq \sqrt{ \E \left|p-\frac {n^+}n\right|^2 }
= \sqrt{ \frac{p(1-p)}n } 
\leq \frac1{2\sqrt n}.
\] 
In this way the fixed-$\bb$ estimates of Theorem~\ref{t:SDEtoMF} and~\ref{t:RWtoMFe} generalize to random $\bb$, in which case the terms related to $\kappa$ can be absorbed by the $\kappa$--independent terms.
\end{rem}

\subsubsection{Direct estimates connecting \texorpdfstring{$\RW$ and $\MF$}{RWn and MF}}
As a consequence of the above results, we obtain the following corollary, linking the random-walk model $\RW$ defined in \S\ref{s:RW} directly to the mean--field continuum model $\MF$ defined in \S\ref{sec:cont-MF}.

\begin{cor}[$\RW \leftrightarrow \MF$ via $\MFe$]
 \label{c:RWtoMFvMFe}
Let the setting be as in both Theorems~\ref{t:MFetoMF} and~\ref{t:RWtoMFe}, and denote the right-hand sides of the corresponding estimates as $R_1$ and $R_2$ respectively. Then 
\begin{equation}
  \E \bigg[ \sum_\pm \| \rho_{\e,n}^\pm (t) - \rho^\pm(t) \|_{1,\infty}^* \bigg]
  \leq 2(R_1 + R_2) + C \e \mathrm{e}^{\beta \delta^{-2} T} \mathrm{e}^{2^5 \mathsf C_V \beta \delta^{-2} t}
  \quad \text{for all } t \in [0,T].
\end{equation}
where $C>0$ depends only on the constants involved in the polynomial bound on the initial data and  Assumptions~\ref{ass:Vd} and \ref{ass:beta-e-delta-bound}.
\end{cor}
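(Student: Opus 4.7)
The strategy is a two-stage triangle inequality $\rho_{\e,n}^\pm \to \rho_\e^\pm \to \rho^\pm$, bridging the empirical measure through the lattice mean field. For each sign, write
\begin{equation*}
\|\rho_{\e,n}^\pm(t) - \rho^\pm(t)\|_{1,\infty}^*
\le \|\rho_{\e,n}^\pm(t) - \rho_\e^\pm(t)\|_{1,\infty}^* + \|\rho_\e^\pm(t) - \rho^\pm(t)\|_{1,\infty}^*,
\end{equation*}
take expectations, and sum over $\pm$. Theorem~\ref{t:RWtoMFe} applies verbatim to each sign and already delivers a dual bounded-Lipschitz estimate, so the two terms coming from the first summand contribute exactly $2R_2$. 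The nontrivial work is therefore to convert the $L^2(\nu_\e)$ estimate of Theorem~\ref{t:MFetoMF} into a dual bounded-Lipschitz estimate on the continuum torus, which is further complicated by the fact that $\rho_\e^\pm$ is supported on the lattice while $\rho^\pm$ has a continuous density.

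To handle this, introduce the piecewise-constant extension $\tilde\rho_\e^\pm$ on $\T^2$ whose $\nu$-density on the cell of centre $\ell$ equals $\tfrac{d\rho_\e^\pm}{d\nu_\e}(\ell)$, and split
\begin{equation*}
\|\rho_\e^\pm - \rho^\pm\|_{1,\infty}^*
\le \|\rho_\e^\pm - \tilde\rho_\e^\pm\|_{1,\infty}^* + \|\tilde\rho_\e^\pm - \rho^\pm\|_{1,\infty}^*.
\end{equation*}
The first summand is a one-cell quadrature error: for any test function $\phi$ with $\|\phi\|_{1,\infty}\le 1$, replacing $\phi$ by $\phi(\ell)$ on each cell of diameter $\sqrt 2\,\e$ produces a local error of order $\e$ times the local mass, which summed over the lattice yields a global bound of order $\e$. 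The second summand is controlled by $\|\tilde g_\e^\pm - g^\pm\|_{L^1(\T^2)}\le\|\tilde g_\e^\pm - g^\pm\|_{L^2(\T^2)}$, where $g^\pm := d\rho^\pm/d\nu$. Decomposing cellwise gives
\begin{equation*}
\|\tilde g_\e^\pm - g^\pm\|_{L^2(\T^2)}^2
\le 2\|g_\e^\pm - g^\pm\|_{L^2(\nu_\e)}^2 + 2\e^2\|\nabla g^\pm(t)\|_\infty^2,
\end{equation*}
whose first term, after summing both signs and using $a^+ + a^- \le \sqrt 2\,\bigl((a^+)^2 + (a^-)^2\bigr)^{1/2}$, delivers the contribution $2R_1$ via Theorem~\ref{t:MFetoMF}.

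The remaining ingredient, and the place where I expect the real work, is an a priori bound on $\|\nabla g^\pm(t)\|_\infty$ for the parabolic system $\MF$, as this controls the residual $C\e$ term and produces the prefactor $\mathrm{e}^{\beta\delta^{-2}T}\mathrm{e}^{2^5 \mathsf C_V \beta \delta^{-2} t}$. The form of that prefactor is consistent with a standard Gronwall-type propagation argument exploiting $\|\nabla V_\delta\|_\infty \le \mathsf C_V\delta^{-1}$ and $\|d^2 V_\delta\|_\infty \le \mathsf C_V\delta^{-2}$, together with the parabolic smoothing of the $\beta^{-1}\Delta\rho^\pm$ term, initiated from the assumed $C^4$ regularity of $\rho^\circ$. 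Such a propagation estimate is almost certainly already developed inside the proof of Theorem~\ref{t:MFetoMF} (in fact the quantity $K$ appearing there is suggestive of exactly this), so the corollary itself reduces to the triangle inequality, the two quadrature steps above, and an invocation of this ambient regularity lemma; no further interaction-potential analysis is needed.
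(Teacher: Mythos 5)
Your proof is correct and follows essentially the same route as the paper: both apply the triangle inequality through $\rho_\e^\pm$, invoke Theorem~\ref{t:RWtoMFe} for the $R_2$ term, pass from the $L^2(\nu_\e)$ estimate of Theorem~\ref{t:MFetoMF} to the dual bounded-Lipschitz norm via $\| \cdot\|_{1,\infty}^* \leq \|\cdot\|_{L^2(\nu_\e)}$ on $\nu_\e$-densities, and control the remaining lattice-vs-continuum discrepancy with the regularity estimate of Lemma~\ref{l:reg-MF}. The only cosmetic difference is your choice of bridge measure: you interpolate through the piecewise-constant extension $\tilde\rho_\e^\pm$ of the lattice solution, whereas the paper interpolates through $f^\pm\nu_\e$, i.e.\ the continuum density sampled on the lattice; both split the $\e$-dependent residual in the same way and yield the same exponential prefactor from the same regularity lemma.
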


For the next result, we introduce the Wasserstein distance $W_1$ on $\cP (\T^{2n})$ through the Kantorovich duality:
\begin{equation} \label{W1:def}
  W_1 (\bmu, \bmu') := \sup_{ \| d \bvarphi \|_\infty \leq 1 } \int_{T^{2n}} \bvarphi \, d(\bmu - \bmu').
\end{equation}
By the particular definition of the Lipschitz constant $\| d \bvarphi \|_\infty$ that we use (see Appendix~\ref{app:function-spaces}) the metric $W_1$ scales linearly in $n$:
\[
  W_1( \rho_1 \otimes \dots \otimes \rho_n, \mu_1 \otimes \dots \otimes \mu_n )
  = \sum_{i=1}^n W_1 (\rho_i, \mu_i).
\]
Hence, the estimate below establishes a convergence rate for $\frac1n W_1$.

\begin{cor}[$\RW \leftrightarrow \MF$ via $\SDE$]
 \label{c:RWtoMFvSDE}
Let the setting be as in both Theorems \ref{t:RWtoSDE} and \ref{t:SDEtoMF}. Let $R_1$ be the right-hand side of the corresponding estimate in Theorem \ref{t:RWtoSDE}, and set  
\begin{equation*}
  \brho := \bigotimes_{i=1}^n \frac{ \rho^{b_i} }{ \| \rho^{b_i} \|_{TV} } \;\in C \big([0,T]; \cP (\T^{2n}) \big),
  \qquad \rho^b := \left\{ \begin{array}{ll}
    \rho^+
    & \text{if } b = 1 \\
    \rho^-
    & \text{if } b = -1.
  \end{array} \right.
\end{equation*}
Then 
\begin{equation}
  \frac1n W_1 \big( \bmu_\e (t), \brho (t) \big)
  \leq  \frac{R_1}{\sqrt n} + 2\mathsf C_V  \Big(\frac1{\sqrt n}  + {\kappa}\Big) \frac t\delta \mathrm{e}^{2\mathsf C_V \delta^{-2} t} + C \e \mathrm{e}^{2\mathsf C_V \beta \delta^{-2} n T}
  \quad \text{for all } t \in [0,T],
\end{equation}
where $C>0$ depends only on the constants involved in the polynomial bound on the initial data and  Assumptions~\ref{ass:Vd} and \ref{ass:beta-e-delta-bound}.
\end{cor}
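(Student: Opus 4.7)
The plan is to invoke the triangle inequality for $W_1$, routing through the law $\bmu(t)$ of $\SDE$:
\[
W_1(\bmu_\e(t),\brho(t)) \;\leq\; W_1(\bmu_\e(t),\bmu(t)) \;+\; W_1(\bmu(t),\brho(t)),
\]
and then bound the two terms using, respectively, Theorem~\ref{t:RWtoSDE} combined with a discretisation step, and a Sznitman-type synchronous coupling in the spirit of Theorem~\ref{t:SDEtoMF}.

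For $W_1(\bmu,\brho)$, I would couple $\bX$ (solving $\SDE$) with an auxiliary process $\bY$ in which each $Y_i$ independently satisfies the McKean--Vlasov SDE associated to $\MF$, driven by the \emph{same} Brownian motion $B_i$, with initial condition coupled to $X_i^\circ$ in a $W_1$-optimal way against the marginal $\rho^{\circ,b_i}/\|\rho^{\circ,b_i}\|_{TV}$. Then $\bY(t)$ has law $\brho(t)$, so $W_1(\bmu,\brho)\leq \E\sum_i|X_i(t)-Y_i(t)|$. Expanding the SDE difference, using $\|\nabla^2 V_\delta\|_\infty\leq \mathsf{C}_V/\delta^2$ from Assumption~\ref{ass:Vd}, and applying Gronwall on $\E\sum_i|X_i(t)-Y_i(t)|$---this is essentially the coupling argument underlying Theorem~\ref{t:SDEtoMF}, but comparing $\bX$ directly to the independent limit $\bY$---produces after division by $n$ the second summand $2\mathsf{C}_V(\tfrac{1}{\sqrt n}+\kappa)\tfrac{t}{\delta}\mathrm{e}^{2\mathsf{C}_V\delta^{-2}t}$. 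The $\kappa$ contribution enters through the initial mass mismatch between $\bb$ and $\rho^\circ$ carried through Gronwall.

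For $W_1(\bmu_\e,\bmu)$, I would insert a piecewise-constant discretisation $\tilde\bmu_\e$ of $\bmu$ on the lattice $\L^n$, obtained by integrating $d\bmu/d\bnu$ over the $\e$-cube centred at each $\bl\in\L^n$. A second triangle inequality gives $W_1(\bmu_\e,\bmu)\leq W_1(\bmu_\e,\tilde\bmu_\e)+W_1(\tilde\bmu_\e,\bmu)$. The second piece is bounded by transporting mass within individual cubes: $W_1(\tilde\bmu_\e,\bmu)\leq \tfrac{\sqrt 2}{2}n\e$, multiplied by the sup-norm of $d\bmu/d\bnu$ obtained from parabolic estimates on the Fokker--Planck equation of $\SDE$, which grows like $\mathrm{e}^{2\mathsf{C}_V\beta\delta^{-2}nT}$; this accounts for the third summand. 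For the first piece, both $\bmu_\e$ and $\tilde\bmu_\e$ have densities $f_\e,\tilde f_\e$ with respect to $\bnu_\e$. Given any test function $\bvarphi$ with $\|d\bvarphi\|_\infty\leq 1$, the per-coordinate Lipschitz constraint yields discrete finite differences bounded by $1$ in each of the $2n$ coordinate directions, so a discrete Poincaré--Wirtinger inequality on $(\L^n,\bnu_\e)$ gives $\|\bvarphi-\bar\bvarphi\|_{L^2(\bnu_\e)}\leq C\sqrt n$. Cauchy--Schwarz then yields
\[
\int\bvarphi\,d(\bmu_\e-\tilde\bmu_\e) = \int(\bvarphi-\bar\bvarphi)(f_\e-\tilde f_\e)\,d\bnu_\e \leq C\sqrt n\,\|f_\e-\tilde f_\e\|_{L^2(\bnu_\e)},
\]
and Theorem~\ref{t:RWtoSDE}---up to an $O(\e)$ correction from comparing $d\bmu/d\bnu$ to its piecewise-constant projection---bounds the $L^2$ norm by $R_1$. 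Dividing by $n$ yields the first summand $R_1/\sqrt n$.

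The delicate step is the per-coordinate Poincaré--Wirtinger estimate: obtaining the \emph{sharp} $\sqrt n$ scaling rather than the trivial $O(n)$ from the $\mathrm{diam}\cdot\mathrm{TV}$ inequality, with constants independent of $n$ and $\e$, is exactly what converts the per-system $L^2$ accuracy of Theorem~\ref{t:RWtoSDE} into the per-dislocation $W_1$ accuracy of order $R_1/\sqrt n$ claimed in the corollary. The matching convention for $\|d\bvarphi\|_\infty$ used in the definition of $W_1$ (which makes it additive under tensor products) is what makes the per-coordinate bound the correct input for the discrete Poincaré inequality.
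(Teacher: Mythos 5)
Your proposal is correct and takes a genuinely different route from the paper at the key technical step. Both proofs open with a triangle inequality that routes through the law $\bmu(t)$ of $\SDE$, and both handle $W_1(\bmu,\brho)$ identically: the synchronous coupling you describe is exactly the coupling $\mathbb P$ provided by Lemma~\ref{l:PoC}, which the paper invokes via Kantorovich duality. The divergence is in how the crucial $\sqrt n$ gain is extracted when converting the $L^2(\bnu_\e)$ estimate of Theorem~\ref{t:RWtoSDE} into a $W_1$ estimate. The paper introduces an extended functional $\tilde W_1$ on $\cM(\T^{2n})$ with test functions constrained by $\|\bvarphi\|_\infty\leq\sqrt n$ in addition to $\|d\bvarphi\|_\infty\leq 1$, claims $\tilde W_1 = W_1$ on $\cP(\T^{2n})$ by a diameter argument, and applies H\"older with the $L^\infty$ bound of the test function; the paper's intermediate measure $\beff\bnu_\e$ need not be a probability measure, which is why it needs $\tilde W_1$ at all. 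You instead insert the mass-$1$ cube-average discretisation $\tilde\bmu_\e$, so that the first piece compares two \emph{probability} measures, and obtain the $\sqrt n$ factor from a tensorised discrete Poincar\'e--Wirtinger inequality on $(\L^n,\bnu_\e)$: since $\|d\bvarphi\|_\infty\leq 1$ forces $\sum_{i=1}^n|\partial_{x_i}\bvarphi|^2\leq n$ pointwise and the Poincar\'e constant of the lattice torus is bounded uniformly in $\e$ (and by tensorisation uniformly in $n$), you get $\|\bvarphi-\overline\bvarphi\|_{L^2(\bnu_\e)}\leq C\sqrt n$. This is a clean alternative; it also sidesteps a fragile point in the paper's route, namely that the cost metric associated with $\|d\bvarphi\|_\infty\leq 1$ under the paper's tensor-norm convention is $\sum_i d_{\T^2}(x_i,y_i)$, whose diameter on $\T^{2n}$ is $n/\sqrt 2$ rather than the Euclidean $\sqrt{n/2}$ the paper quotes, so the asserted identity $\tilde W_1 = W_1$ on $\cP(\T^{2n})$ does not follow from that argument once $n$ is large. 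Your argument never needs it. One small inaccuracy in your write-up: in bounding $W_1(\tilde\bmu_\e,\bmu)$ by within-cube transport there is no factor of $\|d\bmu/d\bnu\|_\infty$; the bound $W_1(\tilde\bmu_\e,\bmu)\leq\tfrac{\sqrt 2}{2}n\e$ holds unconditionally for total mass $1$. This is in fact \emph{sharper} than the corollary's third summand (no exponential), so the final estimate is unaffected. The density regularity of Corollary~\ref{c:rho:est} does enter your argument, but only in the first piece, via the comparison $\|\tilde f_\e - \beff|_{\L^n}\|_{L^2(\bnu_\e)}\lesssim\e^2\|d^2\beff\|_\infty$ needed before $R_1$ applies, and this is of the same order as the $\e^2$-term already inside $R_1$.
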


\subsection{Discussion of results}
\label{sec:discussion}

\subsubsection{Proof techniques}
With the exception of Theorem \ref{t:RWtoMFe}, the proofs of our results mainly use techniques drawn from across the field of applied analysis, and we believe it is the wide variety of the tools which we collect together here which makes the study mathematically interesting. 

A fundamental step in all the proofs is an application of Gronwall's Lemma, which results in the exponential terms in the estimates. In the proofs of Theorems \ref{t:RWtoSDE} and \ref{t:MFetoMF}, we apply this result to classic consistency and stability estimates from numerical analysis, combined with regularity estimates on the solutions to $\SDE$ and $\MF$. In the proof of Theorem~\ref{t:SDEtoMF}, we apply the result to an estimate derived from a Glivenko-Cantelli argument and a classical propagation-of-chaos result, adapted here to particle systems of two species. 

For Theorem \ref{t:RWtoMFe}, however, we use a novel proof technique to obtain an estimate to which we can apply Gronwall's Lemma. More specifically, we use a random time-change characterization as a coupling technique to establish a propagation-of-chaos result for a continuous-time random walk, along with arguments similar to those used in the proof of Theorem~\ref{t:SDEtoMF}.

\subsubsection{Assumptions required and sharpness of estimates}
\label{ss:discussion-of-assumptions}

\paragraph{Regularity of $V_\delta$.}
While the standing Assumption \ref{ass:Vd} imposes bounds on the derivatives of $V_\delta$ up to order $5$, we only need this for the discrete-to-continuum estimates of Theorems~\ref{t:RWtoSDE} and~\ref{t:MFetoMF}. In fact, we can weaken this assumption to bounds on lower-order derivatives at the cost of weakening the decay rate in $\e$. On the other hand, the two mean-field estimates of Theorems~\ref{t:SDEtoMF} and~\ref{t:RWtoMFe} only require Assumption \ref{ass:Vd} to hold up to second order, which corresponds to Lipschitz continuity of the vector field~$F$.

\paragraph{Dependence of Theorem~\ref{t:RWtoSDE} on particle number.}
The $n$ in the exponent in Theorem~\ref{t:RWtoSDE} is unavoidable when one uses the $L^2(\bnu_\e)$-norm. To find an $n$-independent alternative, a different metric, such as the relative entropy, could be used. However, such an approach calls for different proof methods, which is beyond the scope of the current work.

\paragraph{Character of estimates.}
In all of our proofs, we do not use any structural properties of the solutions other than their spatial regularity. Indeed, many of our estimates are based on maximizing the interaction force over all admissible particle configurations. Since such bounds are sharp only on a small, unstable region of the phase space, we do not believe that our estimates are accurate on the macroscopic time scale of order $1$. Such estimates are likely to require Lyapunov functions, and even for simpler particle systems (i.e.\ deterministic and single-sign), we are not aware of any generic techniques that can provide sharper estimates.

\paragraph{Extension to other lattice structures.}
For simplicity, we have chosen here to consider only a simple cubic lattice, which results in a square lattice $\L$ of screw dislocation positions. Other physically relevant cases include the triangular and hexagonal lattices (see \cite{Hudson17} for their treatment in the low-temperature limit). Our setting and results could easily be adjusted to incorporate such lattices too. The main two adjustments are the alteration of the set of neighbouring sites for the dislocations, $\Nhd$ (defined in \eqref{eq:Nhd}), and the change in the periodic domains to accommodate the lattice. Apart from a change of domain, the net effect of these adjustments on our results is a change in value of some of the generic constants in our estimates.

\subsubsection{Convergence corollaries}
\label{sec:convergence-results}
In this section we extract from the main theorems and their two corollaries a number of asymptotic regimes in the five-dimensional parameter space $(\e, n, \beta, \delta, T)$ as $(\e, n) \to (0, \infty)$ in which the solutions to our four models are close. Again, we restrict the parameters to those satisfying Assumption~\ref{ass:Vd} and Assumption~\ref{ass:beta-e-delta-bound}. 

\begin{enumerate}
  \item Given the setting of Theorem \ref{t:RWtoSDE}, if 
  \begin{equation} \label{IC:diff:bd:RWtoSDE}
    \bigg\|  \frac{d\bmu_{\e}^\circ }{d\bnu_\e}-\frac{d\bmu^\circ}{d\bnu}\bigg\|_{L^2(\bnu_\e)} 
     \lesssim \e^2 \mathrm{e}^{C n^2 \beta \delta^{-2} T}
     \quad \text{and} \quad \log \frac1\e \gg n^2 \beta \delta^{-2} T, 
   \end{equation} 
     as $(\e, n) \to (0, \infty)$ then
     \[ \bigg\|  \frac{d\bmu_\e(t)}{d\bnu_\e}-\frac{d\bmu(t)}{d\bnu}\bigg\|_{L^2(\bnu_\e)} \to 0
     \quad \text{uniformly in } t \in [0,T]. \]
     
  \item Given the setting of Theorem \ref{t:MFetoMF}, if 
  \begin{equation} \label{IC:diff:bd:MFetoMF}
    \left\|\frac{ d\rho_{\e}^{\circ,\pm} }{d\nu_\e} - \frac{d\rho^{\circ,\pm} }{d\nu} \right\|_{L^2(\nu_\e)}
     \lesssim \e^2 \mathrm{e}^{C' \beta \delta^{-2} T}
     \quad \text{and} \quad \log \log \frac1\e \gg \beta \delta^{-2} T, 
  \end{equation}
     as $(\e, n) \to (0, \infty)$ then
     \[ \left\|\frac{ d\rho_{\e}^{\pm}(t) }{d\nu_\e} - \frac{d\rho^{\pm} (t) }{d\nu} \right\|_{L^2(\nu_\e)} \to 0
     \quad \text{uniformly in } t \in [0,T]. \]   
     
  \item Given the setting of Theorem \ref{t:SDEtoMF}, let $\bb$ be randomly sampled from $\rho^\circ$ (see Remark \ref{r:b}). If
  \[ \log n \gg \delta^{-2} T, \]
     as $n\to\infty$, then
     \[ \E \| \rho_n^\pm (t) - \rho^\pm(t) \|_{1,\infty}^* \to 0
     \quad \text{uniformly in } t \in [0,T]. \] 
     
  \item Given the setting of Theorem \ref{t:RWtoMFe}, let $\bb$ be randomly sampled from $\rho_\e^\circ$. If 
  \[ \log n \gg \delta^{-2} T, \]
     as $n\to\infty$, then
     \[ \E \| \rho_{\e,n}^\pm (t) - \rho_\e^\pm(t) \|_{1,\infty}^* \to 0
     \quad \text{uniformly in } t \in [0,T]. \]
     
  \item Given the setting of Corollary \ref{c:RWtoMFvMFe}, let $\bb$ be randomly sampled from $\rho_\e^\circ$. If the bound on the difference in initial conditions in \eqref{IC:diff:bd:MFetoMF} is satisfied,
  \begin{equation} \label{pmreg1}
     \log n \gg \delta^{-2} T
  \quad \text{and} \quad
  \log \log \frac1\e \gg \beta \delta^{-2} T, 
  \end{equation} 
     as $(\e, n) \to (0, \infty)$, then
     \[ \E \| \rho_{\e,n}^\pm (t) - \rho^\pm(t) \|_{1,\infty}^* \to 0
     \quad \text{uniformly in } t \in [0,T]. \] 
     
  \item Given the setting of Corollary \ref{c:RWtoMFvSDE}, let $\bb$ be randomly sampled from $\rho^\circ$. If the bound on the difference in initial conditions in \eqref{IC:diff:bd:RWtoSDE} is satisfied (note that $\bmu^\circ = \mu^{\otimes n}$),
  \begin{equation} \label{pmreg2} 
    \log n \gg \delta^{-2} T
  \quad \text{and} \quad
  \log \frac1\e \gg n^2 \beta \delta^{-2} T, 
  \end{equation} 
     as $(\e, n) \to (0, \infty)$, then
     \[ \frac1n W_1 \big( \bmu_\e (t), \brho (t) \big) \to 0
     \quad \text{uniformly in } t \in [0,T]. \] 
\end{enumerate}
All of the statements above can be proved by taking logarithms of the right-hand side of the estimates in our main theorems, and taking appropriate limits, using the relevant assumptions.

We remark that neither of the parameter regimes in \eqref{pmreg1} and \eqref{pmreg2} is contained in the other. Indeed, \eqref{pmreg1} requires no upper bound on $n$, and \eqref{pmreg2} has a weaker upper bound on~$\beta$ than \eqref{pmreg1}.
Furthermore, we expect that the convergence in the six statements above holds in much larger regions in parameter space than the specified ones; this is a ramification of the lack of sharpness of the estimates in our main results, as discussed in \S\ref{ss:discussion-of-assumptions}.

\paragraph{Initial conditions.} Note that it is always possible for any given initial distributions $\bmu^\circ$ and $\rho^\circ$ on the continuous state spaces to find sequences of initial distributions $\bmu_\e^\circ$ and $\rho_\e^\circ$ on the discrete state spaces for which the bounds in \eqref{IC:diff:bd:RWtoSDE} and \eqref{IC:diff:bd:MFetoMF} are satisfied. In fact, the choices
  \begin{align*}
    \frac{d\rho_\e^{\circ,\pm}}{ d\nu_\e } (\ell, b) 
    &:= (1 + \alpha_\e) \frac{d\rho^{\circ,\pm}}{ d\nu } (\ell,b)
    &&\text{for all } (\ell,b) \in \Lpm, \\
    \frac{ d\bmu_\e^\circ }{ d\bnu_\e } (\bell) 
    &:= (1 + a_\e) \frac{ d\bmu^\circ }{ d\bnu } (\bell)
    &&\text{for all } \bell \in \L^n,
  \end{align*}
  satisfy these bounds, where the constants $\alpha_\e, a_\e$ are chosen to ensure that the resulting densities correspond to probability measures. Since $\frac{d\rho^{\circ,\pm}}{ d\nu }$ and $\frac{ d\bmu^\circ }{ d\bnu }$ are assumed to be of class~$C^4$, a computation similar to that in \eqref{fvp:2infty} and \eqref{fvp:2fatinfty} shows that $|\alpha_\e|$ and $|a_\e|$ can be bounded respectively by $\e^2 \|\frac{d\rho^{\circ,\pm}}{d\nu}\|_{2,\infty}$ and $\e^2 \| \frac{d\bmu^\circ}{d\bnu} \|_{2,\infty}$. Then, using the polynomial bound assumed on the initial data, the terms other than $\e^2$ can be absorbed in the exponential, possibly by choosing a larger constant in the exponent.
  
\subsubsection{Scientific and mathematical context}
\label{sec:context}
As mentioned in the introduction, this work has been carried out in the context of a series of studies on related questions. In particular, Figure~\ref{fig:overview} illustrates this context. To clearly indicate the parametric dependence of all models shown, we have included $\beta$ and $\delta$ as sub- and superscripts.

 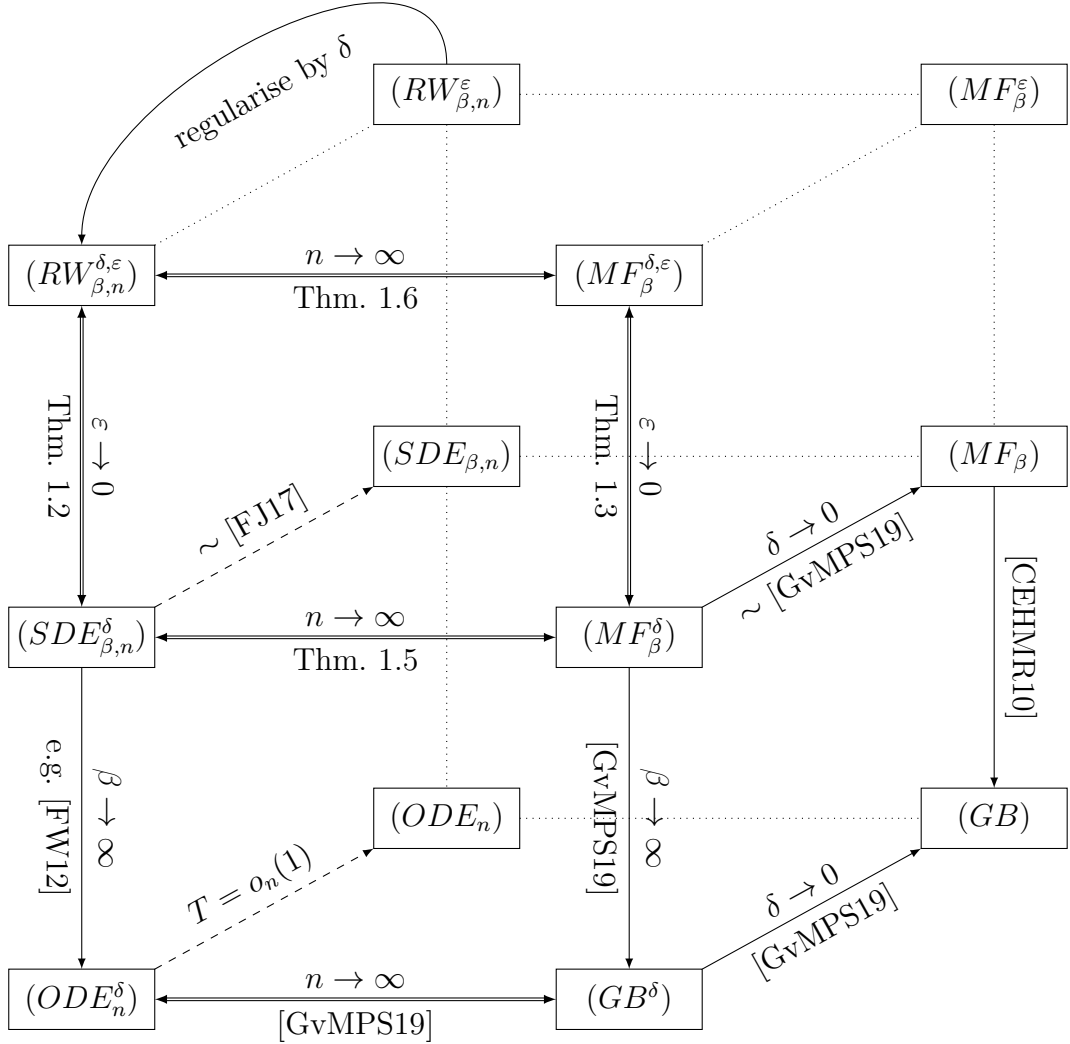
\begin{figure}[p]
 \centering
   \begin{tikzpicture}[scale=.8, >= latex]
     \def \a {9} 
     \def \b {6} 
     \def \c {1.2} 
     \def \d {.5} 
     \def \ee {6} 
    
     \node at (0,0) {$(RW^{\delta, \e}_{\beta,n})$};
     \draw (-\c,-\d) rectangle (\c,\d);
     \draw[double,<->] (0,-\d) -- (0,-\ee+\d) node[midway, anchor=south, rotate=-90]{$\e \to 0$} node[midway, anchor=north, rotate=-90]{Thm.~\ref{t:RWtoSDE}}; 
     \draw[double,<->] (\c,0) -- (\a-\c,0) node[midway, anchor=south]{$n \to \infty$} node[midway, anchor=north]{Thm.~\ref{t:RWtoMFe}}; 
     \draw[dotted] (\c,\d) -- (\b - \c, \a-\b-\d); 
     \draw[<-] (0,\d) to [out=90,in=90, looseness=1] (\b, \a-\b+\d);
     \draw (\b/2, \a-\b) node[rotate=29] {regularise by $\delta$};

     \begin{scope}[shift={(0,-\ee)}]
       \node at (0,0) {$(SDE^{\delta}_{\beta,n})$};
       \draw (-\c,-\d) rectangle (\c,\d);
       \draw[double,<->] (\c,0) -- (\a-\c,0) node[midway, above]{$n \to \infty$} node[midway, below]{Thm.~\ref{t:SDEtoMF}};
       \draw[->, dashed] (\c,\d) -- (\b - \c, \a-\b-\d) node[midway, anchor=south, rotate=29]{$\sim$ \cite{FJ17}};
       \draw[->] (0,-\d) -- (0,-\ee+\d) node[midway, anchor=south, rotate=-90]{$\beta \to \infty$} node[midway, anchor=north, rotate=-90]{e.g.~\cite{FreidlinWentzell12}};
     \end{scope}
     
     \begin{scope}[shift={(\a,0)}]
       \node at (0,0) {$(MF^{\delta, \e}_{\beta})$};
       \draw (-\c,-\d) rectangle (\c,\d);
       \draw[double,<->] (0,-\d) -- (0,-\ee+\d) node[midway, anchor=south, rotate=-90]{$\e \to 0$} node[midway, anchor=north, rotate=-90]{Thm.~\ref{t:MFetoMF}};
       \draw[dotted] (\c,\d) -- (\b - \c, \a-\b-\d); 
     \end{scope}
     
     \begin{scope}[shift={(\a,-\ee)}]
       \node at (0,0) {$(MF^{\delta}_{\beta})$};
       \draw (-\c,-\d) rectangle (\c,\d);
       \draw[->] (\c,\d) -- (\b - \c, \a-\b-\d) node[midway, anchor=south, rotate=29]{$\delta \to 0$} node[midway, anchor=north, rotate=29]{$\sim$ \cite{GarroniVanMeursPeletierScardia19DOI}};
       \draw[->] (0,-\d) -- (0,-\ee+\d) node[midway, anchor=south, rotate=-90]{$\beta \to \infty$} node[midway, anchor=north, rotate=-90]{\cite{GarroniVanMeursPeletierScardia19DOI}};
     \end{scope}
     
     \begin{scope}[shift={(0,-2*\ee)}]
       \node at (0,0) {$(ODE^{\delta}_{n})$};
       \draw (-\c,-\d) rectangle (\c,\d);
       \draw[double,<->] (\c,0) -- (\a-\c,0) node[midway, above]{$n \to \infty$} node[midway, below]{\cite{GarroniVanMeursPeletierScardia19DOI}};
       \draw[->, dashed] (\c,\d) -- (\b - \c, \a-\b-\d) node[midway, anchor=south, rotate=29]{$T = o_n(1)$};
     \end{scope}
     
     \begin{scope}[shift={(\a,-2*\ee)}]
       \node at (0,0) {$(GB^{\delta})$};
       \draw (-\c,-\d) rectangle (\c,\d);
       \draw[->] (\c,\d) -- (\b - \c, \a-\b-\d) node[midway, anchor=south, rotate=29]{$\delta \to 0$} node[midway, anchor=north, rotate=29]{\cite{GarroniVanMeursPeletierScardia19DOI}};
     \end{scope}
     
     \begin{scope}[shift={(\b,\a-\b)}]
       \node at (0,0) {$(RW^{\e}_{\beta,n})$};
       \draw (-\c,-\d) rectangle (\c,\d);
       \draw[dotted] (\c,0) -- (\a-\c,0);
       \draw[dotted] (0,-\d) -- (0,-\ee+\d);
     \end{scope}
     
     \begin{scope}[shift={(\b+\a,\a-\b)}]
       \node at (0,0) {$(MF^{\e}_{\beta})$};
       \draw (-\c,-\d) rectangle (\c,\d);
       \draw[dotted] (0,-\d) -- (0,-\ee+\d); 
     \end{scope}
     
     \begin{scope}[shift={(\b,\a-\b-\ee)}]
       \node at (0,0) {$(SDE_{\beta,n})$};
       \draw (-\c,-\d) rectangle (\c,\d);
       \draw[dotted] (\c,0) -- (\a-\c,0); 
       \draw[dotted] (0,-\d) -- (0,-\ee+\d); 
     \end{scope}
     
     \begin{scope}[shift={(\b+\a,\a-\b-\ee)}]
       \node at (0,0) {$(MF_{\beta})$};
       \draw (-\c,-\d) rectangle (\c,\d);
       \draw[->] (0,-\d) -- (0,-\ee+\d)  node[midway, anchor=south, rotate=-90]{\cite{CannoneElHajjMonneauRibaud10}};
     \end{scope}
     
     \begin{scope}[shift={(\b,\a-\b-2*\ee)}]
       \node at (0,0) {$(ODE_n)$};
       \draw (-\c,-\d) rectangle (\c,\d);
       \draw[dotted] (\c,0) -- (\a-\c,0); 
     \end{scope}
     
     \begin{scope}[shift={(\b+\a,\a-\b-2*\ee)}]
       \node at (0,0) {$(GB)$};
       \draw (-\c,-\d) rectangle (\c,\d);
     \end{scope}
   \end{tikzpicture}
   \caption{Extended version of the overview in Figure \ref{fig:thms}. In this three-dimensional diagram, the passage from foreground to background corresponds to the limit $\delta \to 0$, the passage from left to right corresponds to the limit $n \to \infty$, and from top to bottom corresponds first to the limit $\varepsilon \to 0$, and then the limit $\beta \to \infty$. Double arrows indicate quantitative estimates between the corresponding solutions, single arrows indicate convergence results, dashed arrows indicate partial results, and dotted lines indicate connections which are currently not rigorously proven. Abbreviations such as $(SDE^{\delta}_{\beta,n})$ and $(RW^{\delta, \e}_{\beta,n})$ are used in the rest of the paper without $\beta$ and $\delta$, i.e.\ in this case $\SDE$ and $\RW$.}
   \label{fig:overview}
 \end{figure} 

  \paragraph{The limits $\beta \to \infty$ and $\delta \to 0$ of the continuum models.} 
The main topic of this paper is a study of the models $\RW$, $\SDE$, $\MFe$ and $\MF$. These models are shown at the top of the front face in Figure~\ref{fig:overview} (in their extended notation $(RW^{\delta,\e}_{\beta,n})$, $(SDE^\delta_{\beta,n})$, $(MF^{\delta,\e}_\beta)$, and $(MF^\delta_\beta)$) and the connections between them were already shown in more detail in Figure~\ref{fig:thms}. In the models $(SDE^\delta_{\beta,n})$ and $(MF^\delta_\beta)$ with a continuous state space, classical techniques allow us to consider a further limit in which $\beta \to \infty$; the corresponding limits are shown at the bottom of the front face. In the discrete setting, the limit $\beta \to \infty$ is more complex, and results in more complex dynamics such as rate-independent systems; see for example \cite{BonaschiCarrilloDi-FrancescoPeletier15,MoraPeletierScardia17,Hudson17}.

Taking $\beta\to\infty$ in $(SDE_{\beta,n}^{\delta})$, we obtain
 \begin{equation} \label{ODEen}
   (ODE^\delta_n) \qquad 
   \frac d{dt} \bx (t)
   = \bF(\bx(t), \bb),
   \quad t \in (0,T), \ i = 1,\ldots,n
 \end{equation}
with $\bF$ as in \eqref{F}. 
The system \eqref{ODEen} is the starting point of \cite{GarroniVanMeursPeletierScardia19DOI}, where convergence to $(GB)$ is proven in the joint limit where $(n, \delta) \to (\infty, 0)$. The proof of this joint limit is a combination of two results; the first is a quantitative estimate between the solution of $(ODE^\delta_n)$ and that of the `regularised Groma-Balogh equations' (see~\cite{GromaBalogh99})
\begin{equation*} 
   (GB^\delta) \qquad 
   \partial_t \rho^\pm
   = \div \big( \rho^\pm [ \nabla V_\delta * (\rho^\pm - \rho^\mp) ] \big),
   \quad \mathcal D'(\T^2 \times (0, T)),
 \end{equation*}
which can alternatively be viewed as a zero-viscosity limit in~$(MF_\beta^\delta)$. The second is a convergence result of $(GB^\delta)$ to the (non-regularized) Groma-Balogh equations
\begin{equation} \label{GB}
   (GB) \qquad 
   \partial_t \rho^\pm
   = \div \big( \rho^\pm [ \nabla V * (\rho^\pm - \rho^\mp) ] \big),
   \quad \mathcal D'(\T^2 \times (0, T)).
 \end{equation} 
In the proof, $(MF_\beta^\delta)$ is used as a viscosity approximation of $(GB^\delta)$, which connects these two models by a convergence result. This proof method is an adaptation of the technique used in \cite{CannoneElHajjMonneauRibaud10}, where the existence of finite-entropy solutions to $(GB)$ is proven by passing to the limit $\beta \to \infty$ in
 \begin{equation*} 
   (MF_\beta) \qquad 
   \partial_t \rho^\pm
   = \div \big( \rho^\pm [ \nabla V * (\rho^\pm - \rho^\mp) ] \big) + \beta^{-1} \Delta \rho^\pm,
   \quad \mathcal D'(\T^2 \times (0, T)).
 \end{equation*}
Finally, the connection from $(MF_\beta^\delta)$ to $(MF_\beta)$ follows by the same argument as in the proof of $(GB^\delta)$ to $(GB)$ in \cite{GarroniVanMeursPeletierScardia19DOI}.

We note that since $(GB)$ contains neither a Laplacian nor a regular interaction potential in the right-hand side, the existence and uniqueness of solutions to \eqref{GB} is nontrivial. Besides the existence result of global-in-time finite-entropy solutions in \cite{CannoneElHajjMonneauRibaud10}, uniqueness results are found in \cite{Mainini12a, 
LiMiaoXue14} for regular initial data. Also, while $(GB)$ was originally developed for edge dislocations, it is shown in \cite{GarroniVanMeursPeletierScardia19DOI} that the well-posedness theory in \cite{CannoneElHajjMonneauRibaud10} also extends to the case of screw dislocations.

The connections explained so far allow us to connect $(RW_{\beta,n}^{\delta,\e})$ to $(GB)$. The quantitative estimates of Theorems~\ref{t:RWtoSDE}--\ref{t:RWtoMFe} and Corollaries~\ref{c:RWtoMFvMFe}--\ref{c:RWtoMFvSDE} provide an explicit, $\delta,\beta$-dependent lower bound on $\e$ and $1/n$ along which solutions of $(RW_{\beta,n}^{\delta,\e})$ converge to those of $(MF_\beta^\delta)$. Together with the  result in \cite{GarroniVanMeursPeletierScardia19DOI} on the convergence of solutions of $(MF_\beta^\delta)$  to those of $(GB)$, this gives a convergence result of solutions of $(RW_{\beta,n}^{\delta,\e})$ to those of $(GB)$. Since \cite{GarroniVanMeursPeletierScardia19DOI} does not provide error estimates, we do not have a characterization of the parameter regime in which this convergence holds, and we have to settle for an implicitly given limit along which $(\e, n, \beta, \delta) \to (0, \infty, \infty, 0)$. The future challenge here is to quantify the rate as $(\beta, \delta) \to (\infty, 0)$ in $(MF_\beta^\delta)$, which would specify an asymptotic regime in $(\e, n, \beta, \delta)$ in which the corresponding solutions of $(RW_{\beta,n}^{\delta,\e})$ converge to those of $(GB)$. Incidentally, this would imply a notion of unique solutions to $(GB)$   corresponding to our atomistic model.

\paragraph{Other convergence results.}
For finitely many dislocations, in discrete systems similar to $\RW$, the literature already contains a number of convergence results in the limit of small lattice spacing. Hudson and Ortner showed that in a discrete system arbitrary collections of screw dislocations are locally stable if the lattice spacing $\e$ is sufficiently small~\cite{HudsonOrtner14,HudsonOrtner15}. For the `XY' model, Alicandro, De Luca, Garroni, and Ponsiglione showed the presence of a large number of local minimizers, and they proved convergence of an $n$-dislocation motion with `thermalized' deterministic dynamics, again  in the limit of small lattice spacing~\cite{AlicandroDeLucaGarroniPonsiglione14,AlicandroDeLucaGarroniPonsiglione16,AlicandroDeLucaGarroniPonsiglione17}. For a a discrete random walk, Hudson proved convergence to a deterministic differential equation~\cite{Hudson17}.

The time-dependent many-dislocation limit seems only to have been studied for continuous-space dislocations. When dislocations are points in two dimensions, as in this paper, the many-dislocation limit has a long history in the context of interacting stochastic and deterministic particle systems~\cite{McKean67,Dobrushin79,Oelschlager84,Spohn91,Golse16,JabinWang17}, with specific applications to dislocations in 
\cite{Duerinckx16,
ForcadelImbertMonneau09,
VanMeursMuntean14}. 
However, for dislocations with multiple Burgers vectors, as in this paper, many of the classical interacting-particle methods do not apply. Indeed,  \cite{ChapmanXiangZhu15} identifies the formation of dipoles, which are invisible in the many-dislocation limit, but may alter the many-dislocation limit. In \cite{GarroniVanMeursPeletierScardia19DOI} these computations are made rigorous in a two dimensional, dynamical setting. 
In one spatial dimension and with regular cross-interactions, \cite{vanMeurs18} explores the parts of parameter space where dipole formation is strong enough to affect the many-dislocation limit. In two dimensions and with singular interactions, Schochet devised a method for arbitrary-sign point-vortex solutions of the Euler equations with random initial positions~\cite{Schochet96}, which shares many aspects with dislocations. 
\medskip

In comparison to this earlier work we prove a number of new results: not only the estimates of Theorems~\ref{t:RWtoSDE}--\ref{t:MFetoMF} and Corollaries~\ref{c:RWtoMFvMFe}--\ref{c:RWtoMFvSDE} are new, but also each of the six convergence statements of Section~\ref{sec:convergence-results} was not proved before.

In addition, we believe that the shift from convergence results to non-asymptotic estimates is important. As systems become more complex, with higher-dimensional parameter spaces, their behaviour is more transparently characterized in terms of estimates than in terms of a multitude of scaling regimes.

%
%
%
%
%

\paragraph{Regularisation and dislocation annihilation.}
A key assumption of this study is that dislocations interact via the regularised interaction potential $V_\delta$ introduced in \S\ref{sec:disl-inter-potent}, and it is reasonable to ask what behaviour we expect in the limit where the regularisation parameter $\delta$ tends to zero.
The case where $\delta\to0$ in $(SDE^\delta_{\beta,n})$ introduced in \S\ref{s:SDE} was studied by Fournier and Jourdain in~\cite{FJ17}.
The authors show that there is a critical temperature $\beta_*^{-1}$ above which colliding particles will immediately separate again, and below which colliding particles stick together.

If instead we start from $(ODE^\delta_n)$, the interaction force $-\nabla V_\delta$ blows up as dislocations come close in the limit $\delta \to 0$. Hence, the limiting problem, denoted $(ODE_n)$, requires a collision rule in addition to the set of ODEs in which $V_\delta$ is replaced by $V$. Until the first collision time, the convergence as $\delta \to 0$ is standard ODE theory. However, defining a collision rule which is consistent with $(ODE^\delta_n)$ seems challenging. Indeed, the simulations in \cite[Chap.~9]{VanMeurs15} imply that even in one dimension, the group behaviour of the dislocation dynamics strongly depends on the choice of regularisation $V_\delta$. Also, since  $(ODE_n)$ is a model for screw dislocations, a natural collision rule for dislocations with opposite sign $b$ is that they annihilate each other (i.e., they both vanish). At present, there is no reason to believe that $(ODE^\delta_n)$ for small $\delta$ approximates $(ODE_n)$ except in the case where dislocations remain separated for all time.

In this paper we have avoided any annihilation effects, and thus the six models in the front of the diagram, $(MF_\beta)$ and $(GB)$ conserve the number of dislocations. Yet, to complete the diagram we have also included models that have to deal with collisions. For instance, a more accurate model for dislocation dynamics on the atomic scale is $(RW_{\beta,n}^\e)$ (see \cite{Hudson17}), which differs from $(RW_{\beta,n}^{\delta,\e})$ in that it replaces $V_\delta$ (based on regularising the iterations computed from linear elasticity) by the lattice Green function $\mathsf V_\e : \L \to \R$. While $\mathsf V_\e$ is not singular, a proper description of $(RW_{\beta,n}^\e)$ needs rules on annihilation and possibly creation of dislocations. Such rules are side-stepped in \cite{Hudson17} by assuming that the dislocations remain separated. Instead of starting from $(RW_{\beta,n}^\e)$ with the added complexity of annihilation and creation, we have chosen to replace it by the easier model $(RW_{\beta,n}^{\delta,\e})$; hence the curved arrow in the diagram, which indicates an uncontrolled modelling assumption.



Regarding the other problems in the back of the diagram which are mainly connected by dotted lines, any consistency between them will depend strongly on the chosen collision rule. For instance, in the case of annihilation, there can be no consistency with $(GB)$, because $(GB)$ is mass-conserving. We refer to \cite{BilerKarchMonneau10,AmbrosioMaininiSerfaty11} for the analysis of alternative models to $(GB)$ which include annihilation. Recently, \cite{vanMeursMorandotti19} established a first result on a connection of such models with a particle system which includes annihilation; a more complete result is in preparation.

\subsubsection{Conclusion and outlook}
We return to the main question posed in the introduction: How can one understand the domains of validity of different models describing the same physical phenomenon? 
The results of this paper build on the point of view that such understanding requires quantitative comparison at fixed parameter points. Limit theorems are not sufficient.  

Theorems~\ref{t:RWtoSDE}--\ref{t:RWtoMFe} and Corollaries~\ref{c:RWtoMFvMFe}--\ref{c:RWtoMFvSDE} provide such quantitative comparisons. In particular, they connect precisely an atomistic model to the fundamental dislocation density model in \cite{GromaBalogh99}. At this stage the set of systems that we can connect by quantitative estimates is relatively small, the restrictions on the parameters are severe, and the estimates themselves are clearly sub-optimal. On the other hand, the estimates hold for any parameter point and give a unifying view on the parameter space, which is a definite improvement over the proof of separate asymptotic limits. 

\medskip
Looking forward, even within the restriction to straight and parallel dislocations, the systems of this paper lack some natural physical features. For instance, annihilation of dislocations with opposite Burgers vectors would be an obvious extension. Progress in this direction could start from the atomistic model $\RW$ with $\delta = \e$ equipped with an annihilation rule, where the expected continuum dislocation density model is the system studied in~\cite{BilerKarchMonneau10,AmbrosioMaininiSerfaty11}. Success in this mathematically challenging direction would yield results connecting the models in the rear plane shown in Figure \ref{fig:overview}, and would link to yet further models studied in the literature.

One further physical feature lacking from our analysis is reflected by the upper bound on $\beta$ in Assumption \ref{ass:beta-e-delta-bound}.
Our error estimates show that this bound results in a linear relation between the velocity and the gradient of the energy in the asymptotic regime where $\e \gg 1$. Despite the common use of this linear relation in the literature, it is often criticised as being a crude modelling assumption. Despite this, there is currently no consensus on any improved, nonlinear dislocation dynamics model. We believe that a route which could lead to consensus is the extension of our approach to larger values of $\beta$, which would much more strongly quantify the rigorous justification of other nonlinear models, for example those established in \cite{Hudson17}. 

\smallskip

The choice to only consider straight and parallel screw dislocations also is a severe restriction. Looking beyond this study, it is reasonable to ask to what extent our results hold when considering more general curved dislocations. Naturally, both the technical mathematical and physical challenge of modelling this case is much more significant, so it is difficult to speculate on the validity of any extension of our results to this case, as neither an appropriate microscopic model nor the technical mathematical machinery are currently available to study this case at present. This is a situation we hope to remedy in future work.

\subsection{Outline}
  \label{sec:outline}
The proofs of Theorems~\ref{t:RWtoSDE}, \ref{t:MFetoMF}, \ref{t:SDEtoMF}, and \ref{t:RWtoMFe} are given in Sections~\ref{sec:RWtoSDE}, \ref{sec:MFetoMF}, \ref{s:SDEn:FP-Mark}, and \ref{sec:RWtoMFe};  Corollaries~\ref{c:RWtoMFvMFe} and \ref{c:RWtoMFvSDE} are proved in Section~\ref{sec:RWtoMF}. Each of these sections provides an overview of the proof as a series of auxiliary results; in each case, after the completion of the main argument, the auxiliary results are then proved subsequently within the same section.

\section{Proof of Theorem~\ref{t:RWtoSDE}}
\label{sec:RWtoSDE}
In this section we prove Theorem \ref{t:RWtoSDE}, which provides an estimate on the distance between the laws of $\RW$ and $\SDE$; we recall the statement here for convenience.

\thmRWSDE*

The main technique used here is to consider the Fokker--Planck equations which describe the evolution of the laws for the two processes, and show that the law of the random walk forms an approximation in space of the law for the SDE. This approach is suggested by the Lax-Richtmyer equivalence theorem (sometimes called the Fundamental Theorem of Numerical Analysis), which leads us to prove consistency and stability results concerning the approximation, and from these we deduce a bound by Gronwall's Lemma.

\subsection{Main argument}
As mentioned above, the key to the proof of this theorem is a treatment of the relevant Fokker-Planck equations $\FPe$ and $\FP$, which are given by 
\begin{align} \label{FPe}
  \FPe \qquad 
  \partial_t \bmu_\e
  &= \Omega_\e^* \bmu_\e,
  &\Omega_\e^* \bmu_\e
  &:= \e\!\sum_{\bh \in \Nhd^n_\e} D_{-\bh} \big( \mathcal{R}^\e_{n,\bh}\bmu_\e \big) \\
    &&&= \frac1{\beta\e} \sum_{\bh \in \Nhd^n_\e} 
       D_{-\bh} \Big( \bmu_\e\exp\big(\tfrac12\beta\,\bh\cdot \bF\big) \Big) 
  \label{FPe-F}\\[2\jot]
  \label{FP}
  \FP \qquad 
  \partial_t \bmu
  &= \Omega^* \bmu,
  &\Omega^* \bmu &:= -\div ( \rho \bF ) + \frac{1}{\beta} \Delta \bmu.
\end{align}
Both of these equations are to be understood in a classical sense,
and we note that the laws of $\RW$ and $\SDE$ satisfy the above equations as a direct
consequence of the general theory of Markov processes; see for example Chapter~2 of \cite{Norris97}
and Chapter~2 of \cite{Pavliotis14}.

We note in particular that $\FP$ is a linear parabolic equation on the smooth compact manifold
$\T^{2n}$, and so $\bmu$ is the unique classical solution of this equation for a given initial
condition $\bmu^\circ\in\mathcal{P}(\T^{2n})$. Since $V_\delta$ is assumed to be of class {$C^5$} for each $\delta>0$, the density
with respect to the underlying volume measure $\frac{\dd \bmu(t)}{\dd \bnu}$
is therefore at least $C^{4,1}$ on $(0,T] \times \T^{2n}$ (see for example Theorem~5.3 in \cite{LSU1968}). Applying Theorem~5.1 in \cite{JKO98} for any
$T > 0$ and any initial condition $\bmu^\circ$ which is absolutely continuous with respect to
$\bnu$, we obtain that the smooth solution $\bmu$ satisfies $\frac{\dd \bmu(t)}{\dd \bnu} \to \frac{\dd \bmu^\circ}{\dd \bnu}$ in
$L^1 (\bnu)$ as $t \to 0$.

With the regularity properties of $\bmu$ clarified, our approach can be viewed as an application of the Fundamental Theorem of Numerical Analysis, treating $\FPe$ as a discretisation of $\FP$. In keeping with this approach, we therefore establish appropriate consistency and stability results, encoded in the following two lemmas. To state them, we set some further notation. For $\bx \in \T^{2n}$ and for $\bF:\T^{2n}\to\R^{2n}$ as defined in \eqref{F}, we denote the components by $x_{i,j}$ and $F_{i,j}$, $i=1,\dots,n$, $j=1,2$, respectively. We set $\partial_{i,j}$ as the partial derivative with respect to $x_{i,j}$. 

\begin{lem}[Consistency] \label{l:consis}
Let $\beff \in C^4 (\T^{2n})$, and let $\Omega_\e^*$ and $\Omega^*$ be given by~\eqref{FPe-F} and~\eqref{FP}. Under the assumptions of Theorem~\ref{t:RWtoSDE}, it follows that
 \begin{equation}\label{eq:consis+Rm}
   \begin{gathered}
  \max_{\bell \in \L^n} \big| (\Omega^*_\e - \Omega^*) \beff (\bl) \big| 
  \leq \frac{C\e^2n}{\beta} \sum_{m=0}^4 \beta^m R_m[\beff] \\\text{where}\quad
  R_m[\beff] := \max_{\substack{i= 1,\ldots,n \\ j =1,2}} \big\|\partial_{i,j}^{4-m} \big[\big( F_{i,j}\big)^m \beff \big] \big\|_\infty,
\end{gathered}
\end{equation}
where $C>0$ is a constant independent of $\beta, n$ and $\beff$.
\end{lem}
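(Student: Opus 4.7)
The lemma is a standard numerical-analysis consistency estimate, and the natural strategy is Taylor expansion in $\e$ of the discrete operator, extraction of $\Omega^*\beff$ as the leading order, and a bound on the $\e^4$-remainder in terms of derivatives. First I would rewrite
\[
\Omega_\e^* \beff(\bl)
= \frac{1}{\beta \e^2} \sum_{i=1}^n \sum_{j=1}^2 \sum_{b=\pm 1} \big[ g_b(\bl - \e b \bm{e}_{i,j}) - g_b(\bl) \big],
\quad g_b(\bl') := \beff(\bl') \exp\!\Big(\tfrac{\beta \e b}{2} F_{i,j}(\bl',\bb)\Big),
\]
using that $\Nhd^n_\e$ consists of the $2n$ increments $\e b\bm{e}_{i,j}$, where $\bm{e}_{i,j}$ is the standard basis vector of $\R^{2n}$ corresponding to the $(i,j)$-th coordinate. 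For fixed $(i,j,b)$ and $\bl$, the scalar functions $H_b(\e):=g_b(\bl-\e b\bm{e}_{i,j})$ and $G_b(\e):=g_b(\bl)$ lie in $C^4$ by Assumption~\ref{ass:Vd} and the hypothesis $\beff\in C^4$, and satisfy $H_b(0)=G_b(0)=\beff(\bl)$. Taylor's theorem with Lagrange remainder then gives
\[
H_b(\e)-G_b(\e) = \sum_{k=1}^{3}\frac{\e^k}{k!}\big[H_b^{(k)}(0)-G_b^{(k)}(0)\big] + \frac{\e^4}{24}\big[H_b^{(4)}(\xi^H_b)-G_b^{(4)}(\xi^G_b)\big].
\]

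A direct computation of the first three derivatives, using the chain rule on the exponent $w(\e):=\tfrac{\beta b\e}{2}F_{i,j}(\bl-\e b\bm{e}_{i,j})$, would show that $H_b^{(k)}(0)-G_b^{(k)}(0)$ is odd in $b$ for $k=1,3$---so these terms are killed by $\sum_b$---while $H_b^{(2)}(0)-G_b^{(2)}(0) = \partial_{i,j}^2\beff-\beta\,\partial_{i,j}(F_{i,j}\beff)$ is $b$-independent. Multiplying by $\tfrac{\e^2}{2}$ and the prefactor $\tfrac{1}{\beta\e^2}$, and summing over $b$ and $(i,j)$, recovers exactly $\beta^{-1}\Delta\beff-\div(\bF\beff) = \Omega^*\beff(\bl)$. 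Hence $(\Omega_\e^*-\Omega^*)\beff(\bl)$ is precisely the contribution of the $\e^4$-remainders, and the rest of the proof consists in bounding these.

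For $G_b^{(4)}$ the bound is immediate, since $G_b(\e) = \beff(\bl)e^{\alpha\e}$ with $\alpha:=\tfrac{\beta b}{2}F_{i,j}(\bl)$ independent of $\e$: one gets $|G_b^{(4)}(\xi)|\le \beff(\bl)\alpha^4 e^{|\alpha\xi|}\le C\beta^4 R_4[\beff]$, the exponential being bounded via $|\alpha\e|\le C\beta\e/\delta\le CC'$ from Assumption~\ref{ass:beta-e-delta-bound}. For $H_b^{(4)}$ I would combine Leibniz on $H_b = u\cdot\phi$ (with $u(\e)=\beff(\bl-\e b\bm{e}_{i,j})$ and $\phi=e^w$) with Faà di Bruno on $\phi^{(m)}$. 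Each resulting summand takes the form
\[
C_\pi\,\beta^{c}\,e^{w(\xi)}\,\partial_{i,j}^{k}\beff\cdot\prod_{s=1}^{c}\partial_{i,j}^{a_s}F_{i,j}
\quad\text{(up to extra factors of }\e\text{)},
\qquad c\in\{0,\ldots,4\},\ k+\textstyle\sum_s a_s\le 4,
\]
with every $\|\partial^{a_s}F_{i,j}\|_\infty$ controlled by $C\delta^{-a_s-1}$ through Assumption~\ref{ass:Vd}. Each such term, after the prefactor $\tfrac{\e^4}{24\beta\e^2}=\tfrac{\e^2}{24\beta}$, can be identified via the Leibniz expansion of $\partial_{i,j}^{4-c}(F_{i,j}^c\beff)$ with a contribution bounded by $C\e^2\beta^{c-1}R_c[\beff]$; the apparent mismatch of $\delta^{-1}$-powers created by high-order $\partial^{a_s}F_{i,j}$ and by the auxiliary $\e$-factors in $w^{(a)}=\tfrac{\beta b}{2}[a v^{(a-1)}+\e v^{(a)}]$ is absorbed using $\e\le\delta$ and $\beta\e\le C'\delta$ from Assumption~\ref{ass:beta-e-delta-bound}. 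Summing over the $2n$ directions and the two signs then yields the claimed estimate.

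I expect the main obstacle to be the combinatorial bookkeeping in the Faà di Bruno expansion of $H_b^{(4)}$: matching each of the many mixed products of $\beff$- and $F_{i,j}$-derivatives against a specific $R_c[\beff]$ requires a careful accounting of derivative orders together with the systematic exchange of powers of $\delta^{-1}$ for powers of $\beta$ made available by Assumption~\ref{ass:beta-e-delta-bound}; without this uniform bound on $\beta\e/\delta$ the higher-order terms would be uncontrolled.
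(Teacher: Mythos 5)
Your scaffolding — writing $\Omega_\e^*\beff$ as finite differences of $g_b = \beff\exp(\tfrac12\beta\e b F_{i,j})$, Taylor-expanding to fourth order in $\e$, killing the $k=1,3$ terms by the $b\mapsto -b$ symmetry of $\Nhd^n_\e$, identifying the $k=2$ contribution with $\Omega^*\beff$, and invoking $\beta\e/\delta\lesssim 1$ from Assumption~\ref{ass:beta-e-delta-bound} to control the exponential and the stray $\e$-factors — is all sound. The gap is in the very last step, and it is structural rather than combinatorial. Because you Taylor-expand the composite $H_b(\e)$ directly in $\e$, the Faà di Bruno expansion of $H_b^{(4)}(\xi)$ produces \emph{products of derivatives}, terms of the form $\beta^c e^{w}\,\partial_{i,j}^{4-p}\beff\cdot\prod_{s=1}^c\partial_{i,j}^{a_s}F_{i,j}$. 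But $R_m[\beff]=\max_{i,j}\|\partial_{i,j}^{4-m}\bigl((F_{i,j})^m\beff\bigr)\|_\infty$ is the supremum of a \emph{derivative of a product}, and a single Leibniz summand is not controlled by the $L^\infty$-norm of the full derivative: cancellations among the Leibniz terms can make $\|\partial_{i,j}^{4-m}(F_{i,j}^m\beff)\|_\infty$ far smaller than $\|F_{i,j}^m\partial_{i,j}^{4-m}\beff\|_\infty$, say. Hence your claim that each summand ``can be identified via the Leibniz expansion of $\partial_{i,j}^{4-c}(F_{i,j}^c\beff)$ with a contribution bounded by $C\e^2\beta^{c-1}R_c[\beff]$'' is a non sequitur. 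What your computation actually establishes is a bound of the form $\frac{C\e^2 n}{\beta}\sum_{m}\beta^{m}\sum_{k\le 4-m}\delta^{k-4}\|\partial_{i,j}^{k}\beff\|_\infty$, which is implied by — but strictly weaker than — the statement of the lemma.

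The paper avoids this by ordering the two Taylor expansions the other way round. It first expands $e^z$ in the bounded variable $z=\tfrac12\beta\bh\cdot\bF$, which yields the exact polynomial factors $(F_{i,j})^m$ and the splitting $\Omega_\e^*\beff=\sum_{m=0}^3 L_{\e,m}\beff+\mathcal O\bigl(n\beta^3\e^2\sup|F_{i,j}^4\beff|\bigr)$ with $L_{\e,m}\beff = \tfrac1{\beta\e}\sum_{\bh}\tfrac1{m!}D_{-\bh}\bigl((\tfrac12\beta\bh\cdot\bF)^m\beff\bigr)$; then, for each $m$, it Taylor-expands the spatial difference $D_{-\bh}$ acting on the \emph{intact} function $(F_{i,j})^m\beff$, so that the Lagrange remainder is a supremum of $\partial_{i,j}^{4-m}\bigl((F_{i,j})^m\beff\bigr)$ — exactly $R_m[\beff]$, with no Leibniz splitting ever occurring. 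If you re-sequence your argument accordingly, expanding $e^z$ to third order with remainder before differentiating in $\e$, the product terms recombine into derivatives of $(F_{i,j})^m\beff$ and the gap closes.
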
\medskip

\begin{lem}[Stability] \label{l:staby} 
 Let $\beff \in L^2( \bnu_\e )$ and let $\Omega_\e^*$ be given by~\eqref{FPe-F}. Then, under the assumption of Theorem~\ref{t:RWtoSDE}, we have
\[
 \big(\Omega^*_\e \beff ,\beff\big)_{L^2(\bnu_\e)} 
 \leq
 C \big( n\| d \bF \|_\infty + \beta n \| \bF \|_\infty^2 \big)
 \|\beff \|_{L^2(\bnu_\e)}^2,
\]
where $C>0$ is a constant independent of $\beta, n$ and $\beff$.
\end{lem}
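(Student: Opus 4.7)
The plan is to view $(\Omega_\e^*\beff,\beff)_{L^2(\bnu_\e)}$ as a discretisation of the familiar continuum expression $(\Omega^* g, g)_{L^2(\T^{2n})} = -\tfrac12\int g^2\,\div\bF - \beta^{-1}\|\nabla g\|_{L^2}^2 \leq \tfrac12\|\div\bF\|_\infty\|g\|_{L^2}^2 \lesssim n\|d\bF\|_\infty\|g\|_{L^2}^2$, whose discrete analogue will produce the $n\|d\bF\|_\infty$ term; the extra $\beta n\|\bF\|_\infty^2$ term will arise from discretisation errors. Starting from the definition \eqref{FPe-F}, a discrete summation by parts (change of variables $\bl\mapsto\bl+\bh$ on the torus) gives
\[
(\Omega_\e^*\beff,\beff)_{L^2(\bnu_\e)} \;=\; \frac{1}{\beta\e}\sum_{\bh\in\Nhd_\e^n}\sum_\bl \e^{2n}\,a_\bh(\bl)\,\beff(\bl)\,D_\bh\beff(\bl), \qquad a_\bh(\bl):=\exp\bigl(\tfrac12\beta\bh\cdot\bF(\bl)\bigr).
\]
Pairing each $\bh$ with $-\bh$ I would split $a_\bh = a_\bh^{\mathrm s}+a_\bh^{\mathrm a}$ into its even and odd parts, with $a_\bh^{\mathrm s} = \cosh(\tfrac12\beta\bh\cdot\bF)$ and $a_\bh^{\mathrm a} = \sinh(\tfrac12\beta\bh\cdot\bF)$. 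Assumption~\ref{ass:beta-e-delta-bound} combined with $\|\bF\|_\infty\lesssim \mathsf C_V/\delta$ from Assumption~\ref{ass:Vd} gives $\beta\e\|\bF\|_\infty\lesssim 1$, so Taylor expansions yield the uniform bounds $|a_\bh^{\mathrm s}-1|\lesssim(\beta\e\|\bF\|_\infty)^2$ and $|a_\bh^{\mathrm a}-\tfrac12\beta\bh\cdot\bF|\lesssim(\beta\e\|\bF\|_\infty)^3$.

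Each pair $(\bh,-\bh)$ then contributes $\beff\bigl[a_\bh^{\mathrm s}\,\e\Delta_\bh\beff + 2a_\bh^{\mathrm a}\,D^{\mathrm c}_\bh\beff\bigr]$, where $\Delta_\bh\beff := \e^{-2}\bigl[\beff(\bl+\bh)+\beff(\bl-\bh)-2\beff(\bl)\bigr]$ and $D^{\mathrm c}_\bh\beff := (2\e)^{-1}\bigl[\beff(\bl+\bh)-\beff(\bl-\bh)\bigr]$. The symmetric part is essentially dissipative: its principal piece $\sum\e^{2n}\beff\Delta_\bh\beff = -\sum\e^{2n}(D_\bh\beff)^2 \leq 0$ I simply drop, and the correction $\sum\beff(a_\bh^{\mathrm s}-1)\Delta_\bh\beff$ is controlled by Cauchy--Schwarz together with $\|\Delta_\bh\beff\|_{L^2(\bnu_\e)}\lesssim\e^{-2}\|\beff\|_{L^2(\bnu_\e)}$, producing $\lesssim n\beta\|\bF\|_\infty^2\|\beff\|^2$ after Assumption~\ref{ass:beta-e-delta-bound} turns one spare factor of $\beta\e$ into a constant. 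For the antisymmetric part the leading piece $\tfrac12\beta\bh\cdot\bF$ with $\bh=\e e_{i,j}$ produces the drift term $\sum_{i,j}\sum_\bl\e^{2n}\beff F_{i,j}D^{\mathrm c}_i\beff$, which I handle through the discrete product rule $D^{\mathrm c}_\bh(F\beff) = \bar F D^{\mathrm c}_\bh\beff + \bar\beff D^{\mathrm c}_\bh F$ (with $\bar F(\bl) := \tfrac12[F(\bl+\bh)+F(\bl-\bh)]$) together with $\beff = \bar\beff - \tfrac{\e^2}{2}\Delta_\bh\beff$ and torus periodicity, giving
\[
\sum_\bl F\,\beff\, D^{\mathrm c}_\bh\beff \;=\; -\tfrac12\sum_\bl \beff^2\,D^{\mathrm c}_\bh F \;-\; \tfrac14\sum_\bl D_\bh F\,(D_\bh\beff)^2.
\]
Summing over the $2n$ coordinates and bounding $\|D^{\mathrm c}_\bh F\|_\infty,\,\|D_\bh F\|_\infty\lesssim \|d\bF\|_\infty$ (using $\|D_\bh\beff\|_{L^2(\bnu_\e)}\lesssim\e^{-1}\|\beff\|_{L^2(\bnu_\e)}$ for the second summand) produces the desired contribution $\lesssim n\|d\bF\|_\infty\|\beff\|^2$, while the sub-leading part of $a_\bh^{\mathrm a}$, of size $(\beta\e\|\bF\|_\infty)^3$, contributes a further $\lesssim n\beta\|\bF\|_\infty^2\|\beff\|^2$ after Assumption~\ref{ass:beta-e-delta-bound} is invoked once more.

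I expect the main obstacle to be the careful bookkeeping of these error terms: each application of $\Delta_\bh$ or $D^{\mathrm c}_\bh$ in an $L^2$-estimate contributes a factor of $\e^{-2}$ or $\e^{-1}$, and it is only the combined constraint $\beta\e\|\bF\|_\infty\lesssim 1$ from Assumptions~\ref{ass:Vd} and~\ref{ass:beta-e-delta-bound} that ensures each error carries enough positive powers of $\e$ (or equivalently $\beta\e$) to be absorbed into either the $n\|d\bF\|_\infty$ or the $\beta n\|\bF\|_\infty^2$ summand, rather than blowing up as $\e\to 0$.
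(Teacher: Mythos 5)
Your approach is in substance the same as the paper's: after discrete summation by parts you Taylor-expand the rate $\exp(\tfrac12\beta\bh\cdot\bF)$, isolate a sign-definite dissipative piece (the discrete Laplacian), a drift piece proportional to $\bF$ that you control by passing a difference quotient onto $\bF$, and higher-order remainders that you bound using $\beta\e\|\bF\|_\infty\lesssim 1$ from Assumptions~\ref{ass:Vd}--\ref{ass:beta-e-delta-bound}. The paper organises this via the operators $L_{\e,0}$, $L_{\e,1}$ from \eqref{Om:split} and a remainder $r_\bh$, whereas you organise it via the even/odd split $\cosh$/$\sinh$; these are equivalent bookkeeping devices, and both deliver the same three contributions $0$, $n\|d\bF\|_\infty\|\beff\|^2$, and $\beta n\|\bF\|_\infty^2\|\beff\|^2$.

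There is, however, an arithmetic slip in the discrete integration-by-parts identity you state for the drift term. A direct computation, using $\beff D^{\mathrm c}_\bh\beff = \tfrac12 D^{\mathrm c}_\bh(\beff^2) - \tfrac{\e^2}{2}\Delta_\bh\beff\,D^{\mathrm c}_\bh\beff$ and then $\e^2\Delta_\bh\beff\cdot D^{\mathrm c}_\bh\beff = \tfrac{1}{2\e}\bigl[(\e D_\bh\beff(\bl))^2-(\e D_\bh\beff(\bl-\bh))^2\bigr]$, gives
\[
\sum_\bl F\,\beff\,D^{\mathrm c}_\bh\beff
= -\tfrac12\sum_\bl \beff^2\,D^{\mathrm c}_\bh F
\;+\;\tfrac{\e^2}{4}\sum_\bl D_\bh F\,(D_\bh\beff)^2,
\]
i.e.\ with a plus sign and an extra factor of $\e^2$ on the second term. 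This factor is essential: as written, $\tfrac14\sum D_\bh F\,(D_\bh\beff)^2$ together with $\|D_\bh\beff\|_{L^2(\bnu_\e)}\lesssim\e^{-1}\|\beff\|_{L^2(\bnu_\e)}$ would yield $\|d\bF\|_\infty\,\e^{-2}\|\beff\|^2$, which blows up as $\e\to 0$; with the corrected $\e^2$ in place the two powers of $\e$ cancel and you recover the desired $\|d\bF\|_\infty\|\beff\|^2$. Your final bounds are the right ones, so the subsequent bookkeeping evidently used the corrected identity; nevertheless, the identity as written would not close, and the correction is worth recording explicitly. (Incidentally, the paper side-steps the product-rule route entirely: it symmetrises in $\bh\leftrightarrow-\bh$ directly on $T_2$ and uses that $\bh\cdot\bF=\e F_{i,j}$ already supplies a factor of $\e$, which is a slightly cleaner way to keep the powers of $\e$ under control.)
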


We remark that neither of the two results above rely heavily on the precise nature of $\bF$ beyond its regularity, and we will exploit this in the subsequent proof of Theorem~\ref{t:RWtoMFe}.
The final auxiliary result we require in order to complete our proof provides control of derivatives of the density of the solution to \eqref{FP} in the supremum norm. The result stated here is a corollary of a lemma (Lemma \ref{l:rho:est}) which provides a sharper estimate.

\begin{restatable}{cor}{cordfbnd}
  \label{c:rho:est}
  Suppose that the assumptions of Theorem~\ref{t:RWtoSDE} hold, and further assume that
  \[
    \gamma := \frac{16 \mathsf C_V^2  \beta n^2}{\delta^2}\geq \frac{1}{\delta^2}.
  \]
  Suppose that $\bmu$ solves \eqref{FP}  with initial datum $\bmu^\circ$, and set $\beff := \frac{d\bmu}{d\bnu}$ and $\beff^\circ:= \frac{d\bmu^\circ}{d\bnu}$.
Then for each $k = 0,\dots,4$ there exists a universal constant $C_k$ independent of $\beta$, $n$, $\beff$ and $t$ such that
\[
\| \beff(t) \|_{k,\infty}
      \leq C_k \|\beff^\circ\|_{k,\infty} \mathrm{e}^{ \frac32 (k+1) \gamma T }.
\]
\end{restatable}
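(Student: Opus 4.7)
The plan is to deduce the corollary from the sharper Lemma \ref{l:rho:est}, which presumably controls $\|\beff(t)\|_{k,\infty}$ in terms of the explicit quantities $\|d^m\bF\|_\infty$ for $m=0,\ldots,k+1$. From the definition \eqref{F} together with Assumption \ref{ass:Vd}, each component $F_{i,j}$ satisfies $\|d^m F_{i,j}\|_\infty \leq \mathsf C_V/\delta^{m+1}$, so all relevant derivative norms of $\bF$ are controlled by polynomials in $n$ and $\delta^{-1}$. Under the assumption $\gamma\geq\delta^{-2}$ and Assumption~\ref{ass:beta-e-delta-bound}, all such polynomials are dominated by $\gamma$; for example, $\|\div \bF\|_\infty \lesssim n\mathsf C_V/\delta^2 \leq \gamma$ provided $\beta n$ is bounded below, which is guaranteed. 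The factor $\tfrac{3}{2}(k+1)$ in the exponent then reflects the inductive structure of the lemma, where each differentiation order contributes at most a constant multiple of $\gamma$ to the growth rate.

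For the proof of Lemma \ref{l:rho:est} itself, I would proceed by induction on $k$ using maximum-principle arguments on $\T^{2n}$. In the base case $k=0$, rewriting the Fokker--Planck equation \eqref{FP} in non-divergence form gives
\[
\partial_t\beff = -\bF\cdot\nabla\beff - (\div\bF)\,\beff + \beta^{-1}\Delta\beff.
\]
At a spatial maximum of $\beff(t)$ one has $\nabla\beff=0$ and $\Delta\beff\leq 0$, so the differential inequality $\tfrac{d}{dt}\|\beff(t)\|_\infty \leq \|\div\bF\|_\infty\|\beff(t)\|_\infty$ holds and Gronwall yields the claimed exponential bound. For the inductive step, I would differentiate the equation in arbitrary directions to obtain an equation for $D^{k+1}\beff$ in which the transport--plus--diffusion structure is preserved and new source terms of the form $(d^m\bF)\cdot D^{k+2-m}\beff$ for $m=1,\ldots,k+1$ appear. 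Applying the maximum principle and using the inductive hypothesis together with bounds on $d^m\bF$ then bounds $\tfrac{d}{dt}\|D^{k+1}\beff(t)\|_\infty$ by a constant times $\|D^{k+1}\beff(t)\|_\infty$ plus a forcing term built from lower orders, and Duhamel/Gronwall closes the step.

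The main obstacle is organising the combinatorics of repeated differentiation so that the exponent grows only linearly in $k$ — the factor $k+1$ rather than $k!$ or $2^k$. This requires each source term generated at order $k$ to be bounded by a uniform multiple of $\gamma\cdot\|\beff\|_{k,\infty}$, and the generous prefactor $16\mathsf C_V^2$ in the definition of $\gamma$, together with the restriction to $k\leq 4$, is tailored precisely to absorb these combinatorial constants uniformly. A secondary point requiring care is ensuring the constants $C_k$ remain universal in $\beta$, $n$ and $\delta$; this means that in the Duhamel bookkeeping for the inductive step, the initial datum's $k$-th derivatives must be tracked cleanly and not accumulated into the exponent.
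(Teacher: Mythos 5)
Your proposal takes a genuinely different route from the paper, and the difference is worth explaining carefully.

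The paper does \emph{not} prove Lemma~\ref{l:rho:est} via a parabolic maximum principle. Instead, it writes the solution of~\eqref{FP} via the Duhamel / mild-solution representation
\[
\beff(t) = \Phi_\beta(t) * \beff^\circ - \int_0^t \sum_{l,m}\partial_{l,m}\Phi_\beta(t-s)*\big(F_{l,m}\,\beff(s)\big)\,ds,
\]
then takes $\partial^k_{i,j}$ of both sides and bounds the result in $L^\infty$ using the $L^1$ bounds $\|\Phi_\beta(\cdot,t)\|_{L^1}=1$ and $\|\partial_{l,m}\Phi_\beta(\cdot,t)\|_{L^1}\leq\sqrt{4\beta/(\pi t)}$ (Lemma~\ref{l:GradPhiBeta}). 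Because the kernel carries a $1/\sqrt{t-s}$ singularity, the closing step is a \emph{fractional} Gronwall inequality (Lemma~\ref{l:Gron:frac}), which is where the squared quantity $\gamma = 16\mathsf{C}_V^2\beta n^2\delta^{-2}$ comes from — it is literally $(4n\mathsf{C}_V\sqrt{\beta/\pi}/\delta)^2\pi$. The corollary then follows from the lemma's recursion $b_\ell = \sqrt{\gamma t/\pi}\,e^{\gamma t}\sum_{m<\ell}C_{\ell,m}\delta^{m-\ell-1}b_m$ by estimating $b_k\lesssim(\gamma T)^{k/2}\gamma^k e^{(k+1)\gamma T}$ and then absorbing the polynomial prefactors into exponentials via $(\gamma T)^{\ell/2}\leq e^{\ell\gamma T/2}$ and $\gamma^\ell\lesssim e^{\gamma T/2}$, using $\gamma\geq\delta^{-2}$; the $\tfrac32$ in $\tfrac32(k+1)$ is exactly the overhead of these two absorptions, not an artefact of the lemma's recursion itself, so your explanation of that factor is close in spirit but not quite how it arises.

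Your alternative — differentiating~\eqref{FP} $k$ times and running a maximum-principle argument directly — is plausible and, if carried through, would actually give a \emph{sharper} $\beta$-dependence: at $k=0$ the growth rate is $\|\div\bF\|_\infty\lesssim n\mathsf{C}_V\delta^{-2}$, with no $\beta$, because $\beta^{-1}\Delta\beff\leq 0$ at an interior max and the noise only helps. That sharper rate is dominated by $\gamma$ under Assumption~\ref{ass:beta-e-delta-bound}, so it does imply the stated corollary. However, two things you gloss over are genuine work. First, for $k\geq 1$ the object $D^k\beff$ is tensor-valued and has no sign, so the scalar maximum principle does not apply as written; one must instead argue on $|D^k\beff|^2$ (using $\Delta|D^k\beff|^2 = 2\langle D^k\beff,\Delta D^k\beff\rangle + 2|\nabla D^k\beff|^2$ and the transport term $\bF\cdot\nabla|D^k\beff|^2=0$ at a critical point), and then carefully track how the Leibniz cross-terms $\langle D^m\bF, D^{k+1-m}\beff\rangle$ for $1\leq m\leq k+1$ contract against $D^k\beff$ in the tensor norm of Appendix~\ref{app:function-spaces}. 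Second, the upper-envelope derivative $\frac{d}{dt}\sup_x$ needs the usual Danskin/Rademacher justification to turn the pointwise inequality into a differential inequality Gronwall can use. Neither issue is fatal — this is a workable alternative proof — but the paper's Duhamel route avoids both by never differentiating the PDE pointwise and never needing a sign on $D^k\beff$: it trades them for the $L^1$ control of the heat kernel and a fractional Gronwall lemma, at the (non-essential) cost of a looser $\beta$-dependence in $\gamma$.
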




With these results in place, we may complete the proof of Theorem~\ref{t:RWtoSDE}.\smallskip

\noindent
\emph{Bounding the error terms $R_m[\beff]$.}
We first seek to bound the error terms in the consistency estimate \eqref{eq:consis+Rm}. For convenience, we will define the sum of these terms to be
\begin{equation}\label{KDefs}
    \begin{gathered}
    K_1(\beff) := \sum_{m=0}^4 \beta^{m-1} R_m[\beff].
  \end{gathered}
\end{equation}
Now, recalling the definition of $F_i$ given in \eqref{Fi}, it can be checked that for all
$k,m=0,\ldots,4$ it holds that
\begin{equation*} 
  \big|\partial_{i,j}^k\big(F_{i,j}\big)^m(\bx)\big|=\bigg|\partial^k_{i,j}\bigg(\frac{1}{n}\sum_{l=1}^nb_ib_l\partial_j V_\delta(x_i-x_l)\bigg)^m\bigg|
  \lesssim \delta^{-m-k}. 
\end{equation*}
To see this, we can view this as applying $k$ derivatives to a product of $m$ factors, where each factor is at worst $\delta^{-1-a}$ with $a$ being the number of derivatives that applies to that factor. Recalling the definition of $R_m$ given in \eqref{eq:consis+Rm} and
using the product rule, this entails that
\begin{align*}
  \big|R_m[\beff]\big|
  = \max_{\substack{i= 1,\ldots,n \\ j =1,2}} \bigg\|\sum_{k=0}^{4-m}\bin{4-m}{k}
  \partial_{i,j}^{4-m-k} \big( F_{i,j}\big)^m  \partial_{i,j}^k\beff \bigg\|_\infty 
  \lesssim  \sum_{k=0}^{4-m} \delta^{k-4} \| d^k \beff \|_\infty
  \lesssim \delta^{-4} \| \beff \|_{4,\infty}.
\end{align*}
Note that this estimate holds for an arbitrary smooth function $\beff$. Now, taking $\beff$ to be the solution of \eqref{FP}, applying this estimate to the definition of $K_1$ and then using Corollary~\ref{c:rho:est}, we obtain
\begin{align*}
  K_1\big(\beff(t)\big)
  \lesssim \beta^{3} \delta^{-4} \| \beff(t) \|_{4,\infty}
  \lesssim \gamma^3 \| \beff^\circ \|_{4,\infty} \mathrm{e}^{\frac{15}2  \gamma T}
\end{align*}
Now, from the given polynomial bound on $\|\beff^\circ\|_{4,\infty}$ in terms of $\beta, n, \delta^{-1}$, we have $\|\beff^\circ\|_{4,\infty} \lesssim \gamma^p$ for some $p \geq 0$. Then, from the elementary inequality $\gamma^{p + 3} \lesssim \mathrm{e}^{\gamma / 2} \leq \mathrm{e}^{\gamma T / 2} $, we finally obtain
\begin{equation}\label{eq:K1Est2}
  K_1(\beff(t)) 
  \lesssim \mathrm{e}^{8 \gamma T}.
\end{equation}

\noindent 
\emph{Applying Gronwall's Lemma}.
  Our aim is now to apply Gronwall's Lemma to conclude the proof. In terms of $\beff_\e$ and $\beff$,
  \eqref{FPe} and \eqref{FP} become
  \begin{equation*}
    \partial_t\beff_\e = \Omega^*_\e\beff_\e\quad \text{and}\quad \partial_t\beff = \Omega^*\beff.
  \end{equation*}
  Restricting the latter equation to $\L^n$
  and considering the difference between the resulting equations, we add and subtract
  $\Omega^*_\e\beff$ to find
  \begin{equation}\label{FPDiff} 
    \partial_t\big(\beff - \beff_\e\big) 
    = \Omega^*\beff - \Omega_\e^* \beff_\e
    = (\Omega^* - \Omega_\e^*)\beff + \Omega_\e^* (\beff-\beff_\e).
  \end{equation}
  Defining $v_\e:=\big\|\beff-\beff_\e\big\|_{L^2(\bnu_\e)}$,
  we multiply \eqref{FPDiff} by $\beff - \beff_\e$ and then integrate with respect to $\bnu_\e$,
  giving
  \begin{equation}\label{eq:ell2Err}
    \begin{aligned}
  \frac{1}{2} \frac{\mathrm{d}}{\mathrm{d}t} v_\e^2 
  &= \frac{1}{2}\frac{\mathrm{d}}{\mathrm{d}t}\|\beff- \beff_\e\|^2_{L^2(\bnu_\e)}\\
  &= \Big(\Omega^*\beff-\Omega_\e^*\beff, \beff - \beff_\e\Big)_{L^2(\bnu_\e)}+
  \Big(\Omega_\e^* (\beff - \beff_\e),\beff - \beff_\e\Big)_{L^2(\bnu_\e)}.
\end{aligned}
    \end{equation}
    For the former term, we apply H\"older's inequality, Lemma \ref{l:consis}, the definition of $K_1(\beff)$ given in \eqref{KDefs}, and the triangle inequality to find
\begin{equation*} 
  \big( (\Omega^* -\Omega^*_\e)\beff, \beff - \beff_\e\big)_{L^2(\bnu_\e)}
  \lesssim n \e^2 \, K_1(\beff) \, v_\e.
\end{equation*}
We estimate the latter term in \eqref{eq:ell2Err} directly using Lemma \ref{l:staby}. For the prefactor in the estimate in Lemma \ref{l:staby}, we use Assumption~\ref{ass:Vd} and Assumption~\ref{ass:beta-e-delta-bound} to simplify it to
\begin{equation*}
  n \bigg( {\max_{\substack{i=1,\dots,n\\j=1,2}} \| \partial_{i,j}F_{i,j} \|_\infty} + \beta\, \max_{\substack{i=1,\dots,n\\j=1,2}} \| F_{i,j} \|_\infty^2 \bigg) 
  \lesssim n \delta^{-2} (1 + \beta)
  \lesssim n \beta \delta^{-2}.
\end{equation*}

Plugging these estimates into \eqref{eq:ell2Err} and then applying Young's inequality, we obtain
\begin{equation*} 
  \frac{\mathrm{d}}{\mathrm{d}t} v_\e^2
  \lesssim C' n \e^2 \, K_1(\beff) \, v_\e +
    C n \beta \delta^{-2} v_\e^2 
  \leq C' n \e^4 \beta^{-1} \delta^2 K_1(\beff)^2 +
    C n \beta \delta^{-2} v_\e^2.
\end{equation*}
Now, via a standard application of Gronwall's Lemma, we obtain
\begin{equation}\label{eq:TimeEst}
  v_\e^2 (t)
  \leq \bigg( v_\e^2 (0) + C' n \e^4 \beta^{-1} \delta^2 \int_0^t K_1\big(\beff(s)\big)^2 \, \mathrm{e}^{-C n \beta \delta^{-2} s} \mathrm{d}s \bigg)
  \mathrm{e}^{C n \beta \delta^{-2} t}.
\end{equation}
Finally, we insert estimate \eqref{eq:K1Est2} into \eqref{eq:TimeEst}, giving
\begin{align*}
  v_\e^2(t)
  &\leq \bigg( v_\e^2(0) + C' n \e^4 \beta^{-1} \delta^{2} \mathrm{e}^{16 \gamma T} \int_0^t \mathrm{e}^{-C n \beta \delta^{-2} s} \mathrm{d}s \bigg)
       \mathrm{e}^{C n \beta \delta^{-2} t} \\
  &\leq \bigg( v_\e^2(0) + C' \e^4 \beta^{-2} \delta^{4} \mathrm{e}^{16 \gamma T} \bigg)
       \mathrm{e}^{C n \beta \delta^{-2} t} \\
  &\leq v_\e^2(0) \mathrm{e}^{C n \beta \delta^{-2} t} + C' \e^4 \mathrm{e}^{C'' n^2 \beta \delta^{-2} T}. 
  \end{align*} 
  Taking a square root and using that $\sqrt{a+b}\leq \sqrt{a}+\sqrt{b}$ for $a,b\geq0$,
  this therefore completes the proof of Theorem~\ref{t:RWtoSDE}.

\subsection{Proofs of auxiliary results}
\label{sec:cons-stab-proofs}

In this section we provide the proofs of the various auxiliary results stated in the previous section.

\subsubsection{Proof of Lemma~\ref{l:consis}}
  First, recalling the definition $\mathcal{R}^\e_{n,\bh}$ given in \eqref{RW:Rates}, we Taylor-expand the exponential factor to obtain
  \begin{equation}\label{eq:Rate_Expansion}
    \mathcal{R}^\e_{n,\bh} (\bl) 
    = \frac1{\beta\e^2 }\sum_{m=0}^3\frac{1}{m!}\Big({\frac12}\beta\,\bh \cdot \bF (\bl)\Big)^m
    +{\mathcal O\bigg(\frac{\beta^3}{\e^2}\big|\bh\cdot \bF(\bl)\big|^4\bigg)}.
  \end{equation}
  Here, we have used that the variable in which we Taylor-expand is bounded; indeed, using Assumptions \ref{ass:Vd} and  \ref{ass:beta-e-delta-bound}, we find
  \begin{equation*}
    \Big|{\frac12}\beta\,\bh \cdot \bF (\bl)\Big|
    \leq C \beta \e \| \nabla V_\delta \|_{\infty}
    \leq C \beta \frac\e\delta
    \leq C.
  \end{equation*}
    
  Introducing linear operators $L_{\e,m}$ for $m=0,1,2,3$ which are defined by
\begin{equation}\label{Om:split}
 L_{\e,m} \beff (\bl) 
 := \frac{1}{\beta\e}\sum_{\bh \in \Nhd_\e^n}\frac{1}{m!}D_{-\bh}\Big(\big({\tfrac12}\beta\,\bh\cdot\bF\big)^m\beff\Big) (\bl),
\end{equation}
  we expand to find
\begin{align}
    \Omega_\e^* \beff (\bl)
    &= \e\!\sum_{\bh \in \Nhd^n_\e} D_{-\bh} \big( \mathcal{R}^\e_{n,\bh} \beff \big) (\bl)\notag\\
    &= \sum_{m=0}^3 L_{\e,m} \beff (\bl) 
    + {
      \mathcal O\bigg(n\beta^3\e^2 \max_{\substack{i=1,\dots,n\\j=1,2}} \sup_{\bx \in B(\bell, \e)}\big|\big(F_{i,j}\big)^4(\bx)\big||\beff(\bx)|\bigg),}\notag\\
    &= \sum_{m=0}^3 L_{\e,m} \beff (\bl) 
    + {
      \mathcal O\bigg(n\beta^3\e^2R_4[\beff](\bl)\bigg),}
  \label{eq:Ome_remainder}\end{align}
To continue the expansion of $\Omega_\e^* \beff$, we note that for any smooth function $\boldsymbol{g}:\T^{2n}\to\R$
\begin{equation*} 
  D_{-\bh}\boldsymbol{g}(\bl) 
  = \frac{1}{\e} \sum_{k=1}^{K-1} \frac{1}{k!} (-\bh\cdot\nabla)^k \boldsymbol{g}(\bl) + \mathcal{O} \Big( \frac1\e \sup_{\theta \in[0,1]} \big| (\bh\cdot\nabla)^K \boldsymbol{g}(\bl+\theta \bh) \big| \Big).
\end{equation*}
We note in particular that
\begin{equation*}
  \begin{aligned}
    L_{\e,0}\beff(\bl)
    &= \frac{1}{\beta\e}\sum_{\bh\in\Nhd_\e^n}D_{-\bh}\beff(\bl)\\
    &= \frac{1}{\beta\e^2} \sum_{k=1}^3 \frac{1}{k!} \sum_{\bh\in\Nhd^\e_n}(-\bh\cdot\nabla)^k \beff(\bl)
    + \mathcal{O} \Big( \frac{\e^2n}{\beta} \sup_{\substack{i=1,\ldots,n\\j=1,2}}\sup_{\theta \in[0,1]}
    \big| \partial^4_{i,j} \beff(\bl+\theta \bh) \big| \Big).
  \end{aligned}
\end{equation*}
Since $\Nhd^\e_n$ is symmetric under inversion, i.e. $\bh\in\Nhd^\e_n$ implies $-\bh\in\Nhd^\e_n$,
the terms in the first sum corresponding to odd values of $k$ vanish, and hence using the
definition of $R_0$ given in \eqref{eq:consis+Rm}, we have
\begin{equation}\label{eq:L0e_expansion}
  L_{\e,0}\beff(\bl)
  = \frac{1}{\beta} \Delta \beff(\bl)
  + \mathcal{O} \bigg( \frac {\e^2n}{\beta} R_0[\beff](\bl) \bigg).
\end{equation}
Similarly, for $L_{\e,1}\beff$, we find
\begin{equation*} 
  \begin{aligned}
    L_{\e,1}\beff(\bl)
    &= \frac{1}{\beta\e}\sum_{\bh\in\Nhd_\e^n}D_{-\bh}\big(\tfrac12\beta\bh\cdot\bF\,\beff\big)(\bl)\\
    &= \frac{1}{2\e^2} \sum_{k=1}^2 \frac{1}{k!} \sum_{\bh\in\Nhd^\e_n}(-\bh\cdot\nabla)^k(\bh\cdot\bF\,\beff)(\bl) \\
    &\qquad + \mathcal{O} \Big( n\e^2 \sup_{\substack{i=1,\ldots,n\\j=1,2}}\sup_{\theta \in[0,1]}
    \big| \partial^3_{i,j} (F_{i,j}\beff)(\bl+\theta \bh) \big| \Big).
  \end{aligned}
\end{equation*}
Again using the inversion symmetry of $\Nhd^\e_n$, we find that the $k=2$ term vanishes in the
first sum, which yields
\begin{equation}\label{eq:L1e_expansion}
  L_{\e,1}\beff(\bl)
    = -\div(\bF\,\beff)(\bl)
    + \mathcal{O} \Big( n\e^2  R_1[\beff] \Big).
\end{equation}
Similar arguments then show that
\begin{equation}\label{eq:L2e_expansion}
  L_{\e,2} \beff (\bl)
  =\mathcal{O}\Big(n\beta\e^2 R_2[\beff](\bl) \Big)\quad\text{and}\quad
  L_{\e,3} \beff (\bl)
  =\mathcal{O}\Big(n\beta^2\e^2 R_3[\beff](\bl)\Big).
\end{equation}
Combining the expansions \eqref{eq:L0e_expansion}, \eqref{eq:L1e_expansion} and
\eqref{eq:L2e_expansion} with \eqref{eq:Ome_remainder}, we obtain the result.


\subsubsection{Proof of Lemma~\ref{l:staby}}
  Using the operators $L_{\e,m}$ as defined in \eqref{Om:split}, we expand
  \begin{equation} \label{Omea:expa}
    \!\big(\Omega^*_\e \beff ,\beff\big)_{L^2(\bnu_\e)}
    = \underbrace{\big(L_{\e,0} \beff , \beff\big)_{L^2(\bnu_\e)}}_{=:T_1}
      + \underbrace{\big(L_{\e,1} \beff , \beff\big)_{L^2(\bnu_\e)}}_{=:T_2}
      + \underbrace{\big(\Omega^*_\e\beff - L_{\e,0}\beff-L_{\e,1}\beff,\beff\big)_{L^2(\bnu_\e)}}_{=:T_3}.
  \end{equation}
  In the remainder of the proof, we bound all three terms in the right-hand side separately. \medskip
  
\noindent{\it Estimate on $T_1$.}
  For the first term in \eqref{Omea:expa}, using
  translation invariance, we have that $\sum_{\bl \in \L^n} \boldsymbol{g}(\bl) = \sum_{\bl \in \L^n} \boldsymbol{g}(\bl + \boldsymbol{m})$ for any function $\boldsymbol{g}:\L\to\R$ and any
  translation $\boldsymbol{m} \in \e\Z^{2n}$. Using this fact and the symmetry of $\Nhd^n_\e$
  enables us to `sum by parts' to obtain
    \begin{align}
  T_1&= \frac{\e^{2n-1}}{\beta} \sum_{\bl\in\L^n}\sum_{\bh\in\Nhd^n_\e} \beff(\bl) D_{-\bh}\beff (\bl ) \notag\\
      &= \frac{\e^{2n-1}}{\beta} \bigg(\frac{1}{2}\sum_{\bl\in\L^n}\sum_{\bh\in\Nhd^n_\e} \beff(\bl)  D_{-\bh}\beff (\bl)  + \frac{1}{2}\sum_{\bl\in\L^n}\sum_{\bh\in\Nhd^n_\e} \beff(\bl+\bh) D_{-\bh}\beff (\bl+\bh) \bigg) \notag\\
      &= \frac{\e^{2n-1}}{\beta} \bigg(\frac{1}{2}\sum_{\bl\in\L^n}\sum_{\bh\in\Nhd^n_\e} \beff(\bl)  D_{\bh}\beff (\bl)  - \frac{1}{2}\sum_{\bl\in\L^n}\sum_{\bh\in\Nhd^n_\e} \beff(\bl+\bh) D_{\bh}\beff (\bl) \bigg) \notag\\
  &=-\frac{\e^{2n}}{2\beta} \sum_{\bl\in\L^n}\sum_{\bh\in\Nhd^n_\e} \big|D_{-\bh}\beff(\bl)\big|^2
  \leq 0.\label{eq:T1_stab}
\end{align}
  
\noindent{\it Estimate on $T_2$.}
Next we bound the second term in \eqref{Omea:expa}:
\begin{align*}
  T_2
  &= \frac{\e^{2n-1}}2 \sum_{\bl \in \L^n} \sum_{\bh \in \Nhd^n_\e} \beff(\bl) D_{-\bh}\big((\bh \cdot \bF) \, \beff\big) (\bl) \\
  &= \frac{\e^{2n-2}}2 \sum_{\bl\in\L^n}\sum_{\bh\in\Nhd^n_\e} \Big(\bh \cdot \bF (\bl - \bh) \, \beff (\bl - \bh) \, \beff(\bl) - \bh \cdot \bF (\bl) \, \beff(\bl)^2\Big).
\end{align*}
Employing the inversion symmetry of $\Nhd^n_\e$ as used in the proof of Lemma~\ref{l:consis},
the sum of the second terms in the summand vanishes. To treat the first terms in the summand,
we apply translation invariance and symmetry of $\Nhd^n_\e$, and then the Cauchy--Schwarz inequality to obtain
\begin{align}
  T_2
  &= \frac{\e^{2n-2}}{4} \sum_{\bl\in\L^n}\sum_{\bh\in\Nhd^n_\e} \bh \cdot \bF (\bl - \bh) \, \beff (\bl - \bh)
    \,\beff(\bl) -\bh \cdot \bF (\bl) \, \beff (\bl) \, \beff(\bl-\bh) \notag\\
  &= \frac{\e^{2n-1}}{4} \sum_{\bl\in\L^n}\sum_{\bh\in\Nhd^n_\e} D_{-\bh}(\bh \cdot \bF)
    (\bl)\,\beff(\bl) \,\beff (\bl - \bh) \notag\\
  &\leq n  {\max_{\substack{i=1,\dots,n\\j=1,2}} \| \partial_{i,j}F_{i,j} \|_\infty} \,\|\beff\|_{L^2(\bnu_\e)}^2.\label{eq:T2_stab}
\end{align}

\noindent{\it Estimate on $T_3$.}
Finally, we bound the third term in \eqref{Omea:expa}. Defining $r_\bh(\bl) = \beta\e^2\,\mathcal{R}^\e_{n,\bh}(\bl)-1-{\frac12}\beta\,\bh\cdot\bF(\bl)$, we observe that
\begin{align*}
  T_3
  &= \bigg( \frac1{\beta\e} \sum_{\bh\in\Nhd^n_\e}
    D_{-\bh} \Big[ \Big( \beta\e^2\,\mathcal{R}^\e_{n,\bh}(\bl)-1
    -{\tfrac12}\beta\,\bh\cdot\bF(\bl) \Big) \beff \Big] , \beff \bigg)_{L^2(\bnu_\e)} \\
  &= \frac{\e^{2n-1}}{\beta}\sum_{\bl\in\L^n}\sum_{\bh\in\Nhd^n_\e}D_{-\bh}(r_\bh\beff)(\bl)\beff(\bl).
\end{align*}
Using a similar Taylor series expansion as in \eqref{eq:Rate_Expansion}, we find that
\begin{equation*}
  \max_{\bl \in \L^n} | r_\bh(\bl) | 
  = {\mathcal O \Big( \beta^2\,\max_{\bl \in \L^n} | \bh\cdot\bF(\bl) |^2 \Big)=
  \mathcal O \Big( \beta^2\e^2\,\max_{\substack{i=1,\dots,n\\j=1,2}} \| F_{i,j} \|^2 \Big)}. 
\end{equation*}
Then, a similar argument to that made in \eqref{eq:T2_stab} yields
\begin{equation}\label{eq:T3_stab}
  T_3
  \lesssim \beta n\,\|\beff\|_{L^2(\bnu_\e)}^2 \max_{\substack{i=1,\dots,n\\j=1,2}} \| F_{i,j} \|^2.
\end{equation}
Combining the estimates on $T_1$, $T_2$ and $T_3$ made in \eqref{eq:T1_stab}, \eqref{eq:T2_stab} and \eqref{eq:T3_stab}, we conclude that the statement holds.

\subsubsection{Proof of Corollary~\ref{c:rho:est}}
The final result required in order to establish Theorem~\ref{t:RWtoSDE} is Corollary~\ref{c:rho:est}, which is a simplification of the more precise estimate given in the following lemma.

\begin{lem} \label{l:rho:est}
Suppose that $\bmu$ solves \eqref{FP}  with initial datum $\bmu^\circ$, and set $\beff := \frac{d\bmu}{d\bnu}$.
  Under the assumptions of Theorem~\ref{t:RWtoSDE}, for any $k=0,\dots,4$, the derivative $d^k \beff(s,\cdot)$ is continuous and bounded on $[0,T]\times \T^{2n}$, and there exist universal constants $C_\ell$ and $C_{\ell,m}$ 
such that
\begin{equation} \label{pf:est:pkijft}
      \big\| \beff(t)\big\|_{k,\infty}
      \leq \sum_{\ell=0}^k C_\ell b_\ell \big\| \beff^\circ\big\|_{k-\ell, \infty }
\end{equation}
where $b_\ell$ satisfies the recursion relation
\begin{equation}\label{e:bl:defn}
  b_0 = \mathrm{e}^{\gamma t}, \qquad 
  b_\ell = \sqrt{\frac{\gamma t}{\pi}} \mathrm{e}^{\gamma t} \sum_{m=0}^{\ell-1} C_{\ell,m} \delta^{m-\ell-1} b_m,
\end{equation}
where $\gamma := 16 \mathsf C_V^2 \beta n^2 \delta^{-2}.$
\end{lem}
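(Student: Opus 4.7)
My plan is to establish this estimate via the mild (Duhamel) formulation of $\FP$ combined with heat-kernel smoothing, inducting on the derivative order $k$. Since $V_\delta \in C^5$, standard parabolic regularity (already invoked below~\eqref{FP}) guarantees that $\beff(t,\cdot)\in C^4(\T^{2n})$ for $t>0$ whenever $\beff^\circ\in C^k$ with $k\leq 4$, so that all differentiations to come are justified. Writing $\Omega^*\beff = \beta^{-1}\Delta\beff - \mathrm{div}(\beff\bF)$ and letting $G_t$ denote the heat semigroup on $\T^{2n}$ with diffusivity $\beta^{-1}$, Duhamel's principle yields
\begin{equation*}
\beff(t) = G_t * \beff^\circ + \int_0^t \nabla G_{t-s} * \bigl(\beff(s)\bF\bigr)\,ds,
\end{equation*}
after an integration by parts that moves the divergence onto $G_{t-s}$. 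The key heat-kernel bounds I will use are $\|G_t*g\|_\infty \leq \|g\|_\infty$ and the coordinatewise gradient estimate $\|\partial_{i,j} G_t\|_{L^1(\T^{2n})} \lesssim \sqrt{\beta/t}$; summing over the $2n$ coordinate directions contributes the factor $n$ that ultimately appears in $\gamma$.

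For the base case $k=0$, combining Duhamel with $\|F_{i,j}\|_\infty \lesssim \delta^{-1}$ (from Assumption~\ref{ass:Vd}) and the coordinatewise-summed gradient $L^1$ bound produces the weakly singular Volterra inequality
\begin{equation*}
\|\beff(t)\|_\infty \leq \|\beff^\circ\|_\infty + C n\sqrt{\beta}\,\delta^{-1} \int_0^t (t-s)^{-1/2} \|\beff(s)\|_\infty \,ds.
\end{equation*}
A generalized Gronwall inequality for weakly singular kernels (iterate the convolution and sum the resulting Mittag--Leffler-like series) then gives $\|\beff(t)\|_\infty \leq e^{\gamma t}\|\beff^\circ\|_\infty$ with $\gamma = 16\mathsf C_V^2 \beta n^2 \delta^{-2}$, establishing the claim for $k=0$ with $b_0 = e^{\gamma t}$. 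For the inductive step, I differentiate Duhamel $k$ times, commuting $d^k$ with the convolution $G_t *$ on the first term and applying Leibniz inside the second:
\begin{equation*}
d^k \beff(t) = G_t * d^k \beff^\circ + \sum_{m=0}^k \binom{k}{m} \int_0^t \nabla G_{t-s} * \bigl( d^m \beff(s) \cdot d^{k-m} \bF \bigr)\,ds.
\end{equation*}
The $m=k$ contribution, bounded via $\|\bF\|_\infty \lesssim \delta^{-1}$, reproduces the self-coupling of the base case and yields an $e^{\gamma t}$ factor through the same weakly singular Gronwall argument. The $m<k$ contributions are treated using $\|d^{k-m}\bF\|_\infty \lesssim \delta^{-(k-m+1)}$ together with the inductive hypothesis, fed into the Volterra integral via the standard identity $\int_0^t (t-s)^{-1/2} e^{\gamma s}\,ds \leq \sqrt{\pi/\gamma}\, e^{\gamma t}$ (and its variants for $s^{m/2} e^{\gamma s}$), which produces the prefactor $\sqrt{\gamma t/\pi}$ in~\eqref{e:bl:defn} once constants are reorganised using the definition of $\gamma$.

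The main obstacle will be the careful bookkeeping of the weakly singular convolution kernel $(t-s)^{-1/2}$ through the induction. Each iteration produces both an exponential factor (from the $m=k$ self-coupling, whose extraction requires the generalized Gronwall inequality because the kernel is not integrable at $s=t$) and a polynomial-in-$t$ factor; ensuring that the dimensional factor $n^2$ enters only through $\gamma$, that the constants $C_\ell$ and $C_{\ell,m}$ are universal (independent of $\beta$, $n$, $\delta$), and that the $\delta$-scaling $\delta^{m-\ell-1}$ matches precisely at each level will demand a systematic use of convolution identities for the heat kernel together with a careful accounting of how each application of Leibniz produces additional powers of $\delta^{-1}$.
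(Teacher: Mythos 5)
Your proposal follows essentially the same route as the paper's proof: a Duhamel/mild formulation using the torus heat kernel, $L^1$ bounds on the heat kernel and its gradient (with the $\sqrt{\beta/t}$ singularity), a weakly singular Volterra inequality controlled by a fractional Gronwall lemma to handle the $(t-s)^{-1/2}$ kernel, and a Leibniz expansion plus induction on the derivative order to produce the recursion for $b_\ell$. The only cosmetic difference is how the prefactor $\sqrt{\gamma t/\pi}$ is extracted (you use $\int_0^t (t-s)^{-1/2}\mathrm{e}^{\gamma s}\,ds \leq \sqrt{\pi/\gamma}\,\mathrm{e}^{\gamma t}$, the paper uses $\int_0^t h(s)(t-s)^{-1/2}\,ds \leq 2\sqrt{t}\,h(t)$ for monotone $h$); both are elementary and lead to the same form of $b_\ell$.
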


In order to establish this result, we will require two further general estimates, which we state next. 

\begin{restatable}{lem}{GradPhiBeta}
  \label{l:GradPhiBeta}
  Let $\Phi_\beta\in \mathcal D'\big( \T^{2n} \times (0, T) \big)$ be the heat
  kernel on the torus, satisfying
  \begin{equation}\label{eq:TorusHeatKernel}
    \partial_t\Phi_\beta-\beta^{-1}\Delta \Phi_\beta = 0\quad\text{and}\quad
    \Phi_\beta |_{t=0} = \delta_0,
  \end{equation}
  in the sense of distributions. Then 
  \begin{equation*}
    \|\Phi_\beta(\cdot,t)\|_{L^1(\bnu)}=1\quad\text{and}\quad
    \max_{ \substack{ i = 1,\dots,n \\ j = 1,2} } \|\partial_{i,j}\Phi_\beta(\cdot,t)\|_{L^1(\bnu)}\leq \sqrt{ \frac{4\beta}{t\pi} }\quad
    \text{for all }t>0.
  \end{equation*} 
\end{restatable}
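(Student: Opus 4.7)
The plan is to represent $\Phi_\beta$ as the periodicisation of the Euclidean heat kernel on $\R^{2n}$ and reduce both estimates to integrals over $\R^{2n}$. Specifically, writing
\[
G_\beta(x,t) := \Big(\frac{\beta}{4\pi t}\Big)^{n} \exp\Big(-\frac{\beta|x|^2}{4t}\Big),\qquad x\in\R^{2n},\ t>0,
\]
for the fundamental solution of $\partial_t G - \beta^{-1}\Delta G = 0$ on $\R^{2n}$, one may check that
\[
\Phi_\beta(x,t) = \sum_{k\in\Z^{2n}} G_\beta(x+k,t)
\]
defines a smooth, non-negative, $\Z^{2n}$-periodic function which satisfies \eqref{eq:TorusHeatKernel} on $\T^{2n}\times(0,T)$ and concentrates on the origin as $t\to 0$. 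The rapid Gaussian decay of $G_\beta$ makes the series and all differentiations termwise absolutely convergent on compact time intervals $t\geq t_0>0$.

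For the first claim, since $\Phi_\beta\geq 0$ one has $\|\Phi_\beta(\cdot,t)\|_{L^1(\bnu)} = \int_{\T^{2n}}\Phi_\beta(\cdot,t)\,d\bnu$. Unfolding the sum, applying monotone convergence, and identifying the fundamental cell of $\T^{2n}$ with $Q^n$ gives
\[
\int_{\T^{2n}} \Phi_\beta(x,t)\,d\bnu(x) = \sum_{k\in\Z^{2n}} \int_{Q^n + k} G_\beta(y,t)\,dy = \int_{\R^{2n}} G_\beta(y,t)\,dy = 1.
\]

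For the gradient estimate, differentiate the series termwise and apply the triangle inequality followed by the same unfolding:
\[
\int_{\T^{2n}} |\partial_{i,j}\Phi_\beta(x,t)|\,d\bnu(x) \leq \int_{\R^{2n}} |\partial_{i,j} G_\beta(y,t)|\,dy.
\]
A direct computation yields $\partial_{i,j} G_\beta(y,t) = -\frac{\beta y_{i,j}}{2t} G_\beta(y,t)$. Factoring $G_\beta$ into a product of one-dimensional Gaussians with variance $\sigma^2 = 2t/\beta$ and using the classical identity $\mathbb{E}|Z| = \sigma\sqrt{2/\pi}$ for $Z\sim N(0,\sigma^2)$ gives
\[
\int_{\R^{2n}} |\partial_{i,j} G_\beta(y,t)|\,dy = \frac{\beta}{2t}\sqrt{\frac{2t}{\beta}}\sqrt{\frac{2}{\pi}} = \sqrt{\frac{\beta}{t\pi}} \leq \sqrt{\frac{4\beta}{t\pi}},
\]
which establishes the second inequality.

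There is no genuine obstacle here, only bookkeeping. The only point that requires any care is justifying that $\Phi_\beta$ defined by the lattice sum really is the heat kernel on $\T^{2n}$ (i.e.\ the distributional solution to \eqref{eq:TorusHeatKernel} with initial data $\delta_0$), and that termwise differentiation and the exchange of summation with integration are legitimate; both follow routinely from the Gaussian bounds $|\partial^\alpha G_\beta(x,t)|\lesssim_{\alpha,t} e^{-c|x|^2}$ uniformly on $t\geq t_0$.
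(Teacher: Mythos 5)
Your proof is correct and follows the same basic strategy as the paper: periodicize the Euclidean heat kernel $G_\beta$ over $\Z^{2n}$, unfold the sum over translates of the fundamental cell, and reduce both estimates to integrals over $\R^{2n}$. The only methodological difference is in computing $\|\partial_{i,j} G_\beta(\cdot,t)\|_{L^1(\R^{2n})}$: the paper passes to cylindrical coordinates and ends up with a Gamma-function ratio, whereas you factor $G_\beta$ into a product of one-dimensional Gaussians (with $\sigma^2 = 2t/\beta$) and invoke $\E|Z| = \sigma\sqrt{2/\pi}$. Your one-dimensional route is cleaner and gives the sharp value $\sqrt{\beta/(\pi t)}$, which you then bound by $\sqrt{4\beta/(\pi t)}$. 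Incidentally, the paper's cylindrical-coordinates chain of equalities appears to carry a factor-of-$2$ slip (the coefficient should be $2n-1$, not $4n-2$), so the paper mistakenly presents $\sqrt{4\beta/(\pi t)}$ as the exact value of the integral rather than as an upper bound; your computation gives the correct exact value, and the stated inequality in the lemma remains valid either way.
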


\begin{lem} \label{l:Gron:frac}
  Let $C, T > 0$. Let $u, g \in L^1(0,T)$ be non-negative, with $g$ non-decreasing. If 
  \begin{equation*}
    u(t) \leq g(t) + C \int_0^t \frac{u(s)}{ \sqrt{t-s} } \, ds
    \quad \text{for a.e.~$t \in (0,T)$}, 
  \end{equation*}
  then
    $u(t) \leq 2 g(t) \exp (C^2 \pi t)$ for a.e.~$t \in (0,T)$.
\end{lem}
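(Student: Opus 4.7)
The inequality is a classical singular Gronwall inequality of Henry type, with integrable but non-smooth kernel $(t-s)^{-1/2}$. The natural strategy is to iterate the inequality to regularize the singularity, and then sum the resulting series in closed form via a Mittag--Leffler-type bound.

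Introduce the integral operator $Iu(t) := C\int_0^t (t-s)^{-1/2} u(s)\,ds$, so that the hypothesis reads $u\le g+Iu$. Iterating $N$ times gives
\[
  u(t) \le \sum_{k=0}^{N-1} I^k g(t) + I^N u(t),
  \qquad N\ge 1.
\]
The Beta-function identity $\int_r^t (t-s)^{-1/2}(s-r)^{-1/2}\,ds = \pi$, applied repeatedly via Fubini, yields the semigroup-type formula
\[
  I^k u(t) = \frac{(C\sqrt{\pi})^k}{\Gamma(k/2)} \int_0^t (t-s)^{k/2-1}\, u(s)\, ds \qquad (k\ge 1),
\]
i.e.\ $I$ equals $C\sqrt{\pi}$ times the Riemann--Liouville fractional integral of order $1/2$. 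Using that $g(s)\le g(t)$ for $s\le t$, which is where the monotonicity hypothesis enters, I would obtain
\[
  I^k g(t) \le g(t)\, \frac{(C\sqrt{\pi t})^{k}}{\Gamma(k/2+1)},
\]
and the crude bound $I^N u(t) \le (C\sqrt{\pi})^N t^{N/2-1} \|u\|_{L^1(0,t)}/\Gamma(N/2)$ shows that the remainder tends to $0$ as $N\to\infty$, thanks to the factorial growth of $\Gamma(N/2)$ dominating the geometric factor $(C\sqrt{\pi t})^N$.

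Passing to the limit $N\to\infty$ and summing then gives
\[
  u(t) \le g(t)\, \sum_{k=0}^\infty \frac{(C\sqrt{\pi t})^k}{\Gamma(k/2+1)} = g(t)\, E_{1/2}\bigl(C\sqrt{\pi t}\bigr),
\]
where $E_{1/2}$ is the Mittag--Leffler function of order $1/2$. To finish, one needs the sharp bound $E_{1/2}(z)\le 2 e^{z^2}$ for $z\ge 0$; this is immediate from the classical identity $E_{1/2}(z) = e^{z^2}\bigl(1+\mathrm{erf}(z)\bigr)$, but it can also be proved elementarily by splitting the series as
\[
  E_{1/2}(z) = \sum_{k\ge 0}\frac{z^{2k}}{k!} + z\sum_{k\ge 0}\frac{z^{2k}}{\Gamma(k+3/2)},
\]
recognizing the first sum as $e^{z^2}$, and estimating the second using $\Gamma(k+3/2)\ge k!$ for $k\ge 1$ (with the $k=0$ term $1/\Gamma(3/2)=2/\sqrt{\pi}$ treated separately). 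Substituting $z=C\sqrt{\pi t}$ then yields $u(t)\le 2g(t)\, e^{C^2 \pi t}$.

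The main (minor) obstacle is obtaining the sharp prefactor $2$: iterating the inequality only once and then applying the classical Gronwall lemma yields a bound of the form $g(t)(1+2C\sqrt{t})\, e^{C^2\pi t}$, whose prefactor $1+2C\sqrt{t}$ exceeds $2$ for large $t$, so one genuinely needs to iterate infinitely and close the argument through the Mittag--Leffler function. The remaining subtlety, justifying $I^N u(t)\to 0$ for a.e.\ $t$ purely under the $L^1$ hypothesis on $u$, is handled by the uniform estimate above.
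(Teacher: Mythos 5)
Your proposal is a complete, from-scratch proof; the paper itself omits the proof and simply cites Ye--Gao--Ding~\cite[Cor.~2]{YeGaoDing07} together with ``a standard estimate on the Mittag-Leffler function.'' The cited corollary is proved there by exactly the same iteration/semigroup argument you carry out, and the ``standard estimate'' is the bound $E_{1/2}(z) \leq 2e^{z^2}$ you invoke, so your route coincides with the paper's, merely spelled out rather than delegated to a reference. The computations you present (semigroup identity for fractional integrals, the monotonicity bound $I^k g(t) \leq g(t)(C\sqrt{\pi t})^k/\Gamma(k/2+1)$, and the remainder estimate for $N\geq 2$ using $(t-s)^{N/2-1} \leq t^{N/2-1}$ together with $u\in L^1$) are correct and handle the measure-theoretic subtleties properly.

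One defect worth noting: the ``elementary'' alternative you sketch for $E_{1/2}(z)\leq 2e^{z^2}$ does not actually close. After splitting into even and odd terms and using $\Gamma(k+3/2)\geq k!$ for $k\geq 1$, the odd part is bounded by
\[
  \frac{2z}{\sqrt\pi} + z\bigl(e^{z^2}-1\bigr),
\]
so the total bound is $(1+z)e^{z^2} + z\bigl(\tfrac{2}{\sqrt\pi}-1\bigr)$, which exceeds $2e^{z^2}$ for every $z>1$. The identity $E_{1/2}(z)=e^{z^2}\bigl(1+\mathrm{erf}(z)\bigr)$ (equivalently $E_{1/2}(z)=e^{z^2}\,\mathrm{erfc}(-z)$) is the correct way to obtain the factor $2$, and you do state it; just drop or repair the elementary fallback, since as written it only establishes the required bound on a bounded $z$-range and the lemma needs it for all $t\in(0,T)$, i.e.\ all $z\geq 0$.
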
 

Lemma~\ref{l:Gron:frac} is a specific form of Gronwall's Lemma for fractional derivatives; the proof is a straightforward application of \cite[Cor.~2]{YeGaoDing07} with a standard estimate on the Mittag-Leffler function, and so we omit it.

\begin{proof}[Proof of Lemma~\ref{l:GradPhiBeta}]
  We begin by noting that the heat kernel on $\R^{2n}$, denoted
  $\Psi_\beta\in \mathcal D'\big( [0, T] \times \R^{2n} \big)$, satisfies the equation
  \begin{equation*} 
    \partial_t\Psi_\beta-\beta^{-1}\Delta \Psi_\beta = 0,\quad\text{and}\quad
    \Psi_\beta \Big|_{t=0} = \delta_0,
  \end{equation*}
  and has the expression
  \begin{equation*} 
    \Psi_\beta(t,\bx) := \Big(\frac\beta{4\pi t}\Big)^{n}\mathrm{e}^{-\beta|\bx|_2^2/4t},
  \end{equation*}
  where $|\cdot|_2$ denotes the Euclidean norm.
  Identifying $\bx\in \T^{2n}$ with a point in $Q^n = [-\frac12, \frac12)^{2n}$, It follows that we may express
  $\Phi_\beta$ by summing over translates of $\Psi_\beta$, i.e.
  \begin{equation*} 
    \Phi_\beta(t,\bx) = \sum_{\boldsymbol{m}\in\Z^{2n}}\Psi_\beta(t,\bx-\boldsymbol{m}).
  \end{equation*}
  The decay of $\Psi_\beta$ ensures that this sum is well--defined, and converges absolutely
  for all $(t,\bx)\in(0,T]\times\T^{2n}$. As a consequence, it is clear that $\Phi_\beta$ is periodic, and
  satisfies \eqref{eq:TorusHeatKernel}.
  
  To prove the first results, we note that $\Phi_\beta$ is positive, so 
  \begin{equation*}
    \|\Phi_\beta(t)\|_{L^1(\bnu)} = \int_{Q^n} \sum_{m\in\Z^{2n}}\Psi_\beta(t,\bx-\boldsymbol{m}) d\bx =
    \int_{\R^{2n}}\Psi_\beta(t,\bx) d\bx = 1.
  \end{equation*}
  The second result follows by noting that
  \begin{align*}
      \|\partial_{i,j} \Phi_\beta(t,\cdot)\|_{L^1(\bnu)}
      &= \int_{Q^n} \left|\sum_{\boldsymbol{m}\in\Z^{2n}}\partial_{i,j} \Psi_\beta(t,\bx-\boldsymbol{m})\right| \, d\bx \\
      &\leq\sum_{\boldsymbol{m}\in\Z^{2n}} \int_{Q^n} \left|\partial_{i,j} \Psi_\beta(t,\bx-\boldsymbol{m})\right|  \, d\bx
      =\left\|\partial_{i,j} \Psi_\beta(\cdot,t)\right\|_{L^1(\R^{2n})}.
  \end{align*} 
  The result now follows by transforming to cylindrical coordinates and integrating:
    \begin{equation*}
      \begin{aligned}
        \left\|\partial_{i,j} \Psi_\beta(\cdot,t)\right\|_{L^1(\R^{2n})}
        &=2\pi\left(\frac\beta{4\pi t}\right)^{n+1}\int_{\R^{2n}}|x_{i,j}| \mathrm{e}^{-\beta|\bx|^2_2/4t}d\bx\\
        &=2\pi\left(\frac\beta{4\pi t}\right)^{n+1}
        \int_{-\infty}^\infty|x_{i,j}| \mathrm{e}^{-\beta x^2_{i,j}/4t}dx_{i,j}
        \int_{\R^{2n-1}}\mathrm{e}^{-\beta |\boldsymbol{y}|^2_2/4t}d\boldsymbol{y}\\
        &=4\pi\left(\frac\beta{4\pi t}\right)^{n+1}\int_0^\infty r\mathrm{e}^{-\beta r^2/4t}dr
        \int_{\R^{2n-1}}\mathrm{e}^{-\beta|\boldsymbol{y}|^2_2/4t}d\boldsymbol{y}\\
        &= 2\left(\frac\beta{4\pi t}\right)^{n}
        \big|\mathbb{S}^{2n-2}\big|\int_0^\infty R^{2n-2}\mathrm{e}^{-\beta R^2/4t}dR\\
        &=(4n-2)\frac{\Gamma(n-\tfrac12)}{\Gamma(n+\tfrac12)}
        \left(\frac\beta{4\pi t}\right)^{1/2}=\sqrt{\frac{4\beta}{t\pi}}
    \end{aligned}
  \end{equation*}
  The final equality follows as a straightforward consequence of the fact that
  $\Gamma(x+1)=x\,\Gamma(x)$.
\end{proof}

Using the results of Lemma~\ref{l:GradPhiBeta} and Lemma~\ref{l:Gron:frac}, we may now establish Lemma~\ref{l:rho:est}.

\begin{proof}[Proof of Lemma~\ref{l:rho:est}]
Let $\Phi_\beta$ be the heat kernel as defined in \eqref{eq:TorusHeatKernel}.
We can characterise the solution $\bmu$ to~\eqref{FP} as
\begin{equation*}
  \bmu(t) = \Phi_\beta(t) * \bmu^\circ
  - \int_0^t \sum_{l=1}^n\sum_{m=1,2}\partial_{l,m} \Phi_\beta(t-s)*\big(F_{l,m}\,\bmu(s)\big)\mathrm{d}s,
\end{equation*}
where all convolutions are spatial. Taking densities with respect to $\bnu$, this becomes
\begin{equation*}
  \beff(t) = \Phi_\beta(t) * \beff^\circ
  - \int_0^t \sum_{l=1}^n\sum_{m=1,2}\partial_{l,m} \Phi_\beta(t-s)*\big(F_{l,m}\,\beff(s)\big)\mathrm{d}s.
\end{equation*}
Differentiating, we find that
\begin{equation*}
  \partial^k_{i,j} \beff(t) = \Phi_\beta(t) * \partial^k_{i,j}\beff^\circ 
  - \int_0^t \sum_{l=1}^n\sum_{m=1,2} \partial^k_{i,j} \big(F_{l,m}\,\beff(s) \big) * \partial_{l,m} \Phi_\beta(t-s) \mathrm{d}s.
\end{equation*}
Taking norms and using the fact that $\|f*g\|_{L^\infty}\leq \|f\|_{L^1}\|g\|_{L^\infty}$ we obtain
\begin{align*}
  \big\|\partial^k_{i,j} \beff(t)\big\|_\infty
  &\leq \big\|\Phi_\beta(t)\big\|_{L^1(\bnu)}\big\|\partial^k_{i,j}\beff^\circ\big\|_\infty \\
  &\qquad\qquad+ \int_0^t \sum_{l=1}^n\sum_{m=1,2} \big\|\partial^k_{i,j} \big(F_{l,m}\,\beff(s) \big)
    \big\|_\infty \|\partial_{l,m} \Phi_\beta(t-s)\|_{L^1(\bnu)}ds.
\end{align*}
Now using the expressions provided in Lemma~\ref{l:GradPhiBeta}, we obtain
\begin{equation}\label{eq:dkf_Linfty}
  \big\|\partial^k_{i,j} \beff(t)\big\|_\infty
  \leq \big\|\partial^k_{i,j}\beff^\circ\big\|_\infty
  + 2 \sqrt{\frac\beta{\pi}} \int_0^t \sum_{l=1}^n\sum_{m=1,2}  \frac{ \big\|\partial^k_{i,j} \big(F_{l,m}\,\beff(s) \big)
  \big\|_\infty }{\sqrt{t-s}} ds.
\end{equation}
We now use this expression to prove the estimate by induction.
For $k = 0$, we have
\begin{equation*}
  \|\beff(t)\|_\infty
  \leq \|\beff^\circ\|_\infty
  + \frac{4n\mathsf C_V}{\delta}\sqrt{\frac{\beta}{\pi }} \int_0^t \frac{ \|\beff(s)\|_\infty }{\sqrt{t-s}}
  \mathrm{d}s.
\end{equation*}
Applying the result of Lemma \ref{l:Gron:frac}, we obtain
\begin{equation}\label{eq:0OrderSupBound}
  \|\beff(t)\|_\infty 
  \leq G_0(t)
  := 2 \|\beff^\circ\|_\infty \exp \left( 16 \mathsf C_V^2 \beta n^2 \delta^{-2} t \right) 
  = 2\|\beff^\circ\|_\infty\mathrm{e}^{\gamma t}.
\end{equation}

For $k=1$, using the product rule and \eqref{eq:0OrderSupBound}, it follows that
\eqref{eq:dkf_Linfty} may be estimated by
\begin{equation*} 
\begin{aligned}
  &\big\|\partial_{i,j} \beff(t)\big\|_\infty\\
  & \leq \big\|\partial_{i,j}\beff^\circ\big\|_\infty
  + 2 \sqrt{\frac\beta{\pi}} \int_0^t \sum_{l=1}^n\sum_{m=1,2} \big\|\partial_{i,j} F_{l,m}\big\|_\infty\big\|\beff(s)
  \big\|_\infty \frac{ 1 }{\sqrt{t-s}}ds\\
  & \qquad+ 2 \sqrt{\frac\beta{\pi}} \int_0^t \sum_{l=1}^n\sum_{m=1,2} \big\|F_{l,m}\big\|_\infty\big\|\partial_{i,j} \beff(s)
  \big\|_\infty \frac{1}{\sqrt{t-s}}ds\\
  & \leq \big\|\partial_{i,j}\beff^\circ\big\|_\infty
  + 4 \sqrt{\frac\beta{\pi}} \mathsf C_V n \delta^{-2} \int_0^t \frac{G_0(s)}{\sqrt{t-s}} ds
  + 4 \sqrt{\frac\beta{\pi}} \mathsf C_V n \delta^{-1} \int_0^t \frac{ \big\|\partial_{i,j}\beff(s)\big\|_\infty }{\sqrt{t-s}}ds\\
  & \leq\big\|\partial_{i,j}\beff^\circ\big\|_\infty
  + 2\sqrt{\frac{\gamma t}{\pi}} \delta^{-1} G_0(t)
  + \sqrt{\frac{\gamma}{\pi}} \int_0^t \big\|\partial_{i,j}\beff(s)\big\|_\infty\frac{1}{\sqrt{t-s}}ds,
\end{aligned}
\end{equation*}
where, to estimate the first integral term on the penultimate line, we have relied on the
elementary upper bound
\begin{equation*}
  \int_0^t \frac{ h(s) }{\sqrt{t-s}}\mathrm{d}s
  \leq h(t) \int_0^t \frac1{\sqrt{t-s}}\mathrm{d}s = 2 \sqrt t \,h(t),
\end{equation*}
for any monotone increasing function $h$.
Applying Lemma \ref{l:Gron:frac} once more, we now obtain
\begin{equation*}
  \begin{aligned}
  \big\|\partial_{i,j} \beff(t)\big\|_\infty
  \leq G_1(t)
  &:= 2 \left(\big \|\partial_{i,j}\beff^\circ\big\|_\infty + 2\sqrt{\frac{\gamma t}{\pi}} \delta^{-1} G_0(t) \right)\mathrm{e}^{\gamma t} \\
  &= 2 \big\|\partial_{i,j}\beff^\circ\big\|_\infty\mathrm{e}^{\gamma t} 
     + 4\sqrt{\frac{\gamma t}{\pi}} \delta^{-1} \|\beff^\circ\|_\infty\mathrm{e}^{2 \gamma t}.
\end{aligned}
\end{equation*}
For $k=2$, \eqref{eq:dkf_Linfty} becomes 
\begin{equation*}
  \begin{aligned}
  \big\|\partial^2_{i,j} \beff(t)\big\|_\infty
  &\leq \big\|\partial_{i,j}^2 \beff^\circ\big\|_\infty
  + 2\sqrt{\frac{\gamma t}{\pi}} \delta^{-1} \sum_{a=0}^1\bin{2}{a} G_a(t) \delta^{a-2} \\
  &\qquad + \sqrt{\frac{\gamma}{\pi}} \int_0^t \big\|\partial_{i,j}^2 \beff(s)\big\|_\infty\frac{1}{\sqrt{t-s}}ds.
\end{aligned}
\end{equation*}
Again, applying Lemma \ref{l:Gron:frac}, we obtain
\begin{equation*}
  \begin{aligned}
  \big\|\partial^2_{i,j} \beff(t)\big\|_\infty
  &\leq G_2(t)
  := 2 \left(\big\|\partial^2_{i,j}\beff^\circ\big\|_\infty
    + 2\sqrt{\frac{\gamma t}{\pi}} \delta^{-1} \sum_{a=0}^1\bin{2}{a} G_a(t) \delta^{a-2} \right)\mathrm{e}^{\gamma t} \\
  &= 2 \big\|\partial^2_{i,j}\beff^\circ\big\|_\infty\mathrm{e}^{\gamma t} 
     + 16\sqrt{\frac{\gamma t}{\pi}} \delta^{-2} \big\|\partial_{i,j}\beff^\circ\big\|_\infty\mathrm{e}^{2 \gamma t}\\
  &\qquad+ 8\sqrt{\frac{\gamma t}{\pi}} \delta^{-3} \|\beff^\circ\|_\infty\mathrm{e}^{2 \gamma t}
  + 32 \frac{\gamma t}{\pi} \delta^{-2} \|\beff^\circ\|_\infty\mathrm{e}^{3 \gamma t}.
\end{aligned}
\end{equation*}
By repeating the argument above inductively, the assertion of the lemma follows.
\end{proof}

We now restate and prove Corollary~\ref{c:rho:est}, which simplifies the previous lemma, and is used to prove Theorem~\ref{t:RWtoSDE}.

\cordfbnd*

\begin{proof}
Replacing $t$ by $T \geq 1$ in the expression in \eqref{e:bl:defn}, the term $m = \ell-1$ dominates in the sum as $\delta\to0$. Then, estimating $\delta^{-2} \leq \gamma$, we get
\begin{align*}
  b_k
  \lesssim \sqrt{ \gamma T } \gamma\mathrm{e}^{\gamma T} b_{k-1}
  &\lesssim (\gamma T)^{k/2} \gamma^k\mathrm{e}^{(k+1) \gamma T}.
\end{align*}
Inserting this upper bound into \eqref{pf:est:pkijft}, we obtain
\begin{align*} 
      \| \beff(t) \|_{k,\infty}
  &\leq \sum_{\ell=0}^k C_\ell (\gamma T)^{\ell/2} \gamma^\ell \| \beff^\circ \|_{k - \ell,\infty}\mathrm{e}^{(\ell+1) \gamma T}
\end{align*}
for different universal constants $C_\ell$.

Next, we continue the estimate by bounding the factors $(\gamma T)^{\ell/2}$ and $\gamma^\ell$. Since 
$$
 (\gamma T)^{\ell/2} \leq\mathrm{e}^{\ell \gamma T/2}
 \quad \text{and} \quad
 \gamma^\ell \lesssim\mathrm{e}^{\gamma /2} \leq\mathrm{e}^{\gamma T /2},
 $$
we obtain
\begin{align*} 
      \| \beff(t) \|_{k,\infty}
  &\leq \sum_{\ell=0}^k C_\ell\mathrm{e}^{\ell\gamma t/2}\mathrm{e}^{\gamma T /2} \| \beff^\circ \|_{k - \ell,\infty}\mathrm{e}^{(\ell+1) \gamma T}
  \leq\sum_{\ell=0}^k C_\ell \| \beff^\circ \|_{k - \ell,\infty}\mathrm{e}^{\frac32 (\ell+1) \gamma T}.
\end{align*}
This now directly yields the result as stated.
\end{proof}

\section{Proof of Theorem~\ref{t:MFetoMF}}
\label{sec:MFetoMF}
In this section we prove Theorem~\ref{t:MFetoMF}, which we recall here for convenience. 

\thmMFMFe*

As in the proof of Theorem~\ref{t:RWtoSDE}, the main technique used here is an application of Gronwall's Lemma after splitting the difference of the `nonlinear generators' that appear on the right-hand side of $\MFe$ and $\MF$; we recall the definitions of these equations from \eqref{RW:FP-MF} and~\eqref{MF}. This splitting yields three terms: two of these can be dealt with in analogue to our treatment of the splitting performed in the proof of Theorem~\ref{t:RWtoSDE}, while the third is a product of the nonlinearity, and requires new arguments.

\subsection{Main argument}
To facilitate our analysis, we first introduce convenient notation. First, to avoid cluttering of sums over $\pm$, we introduce the volume measures $\cV \in \cM_+ (\T_\pm^2)$ and $\cV_\e \in \cM_+ (\Lpm)$ by defining
\begin{align*}
  \cV(A^+, A^-) &:= \nu(A^+) + \nu(A^-) && \text{for all } \big(A^+\times\{+\}\big)\cup \big(A^-\times\{-\}\big) \subset \T_\pm^2 \\
  \cV_\e(A^+,A^-) &:= \nu_\e(A^+) + \nu_\e(A^-) && \text{for all } \big(A^+\times\{+\}\big)\cup \big(A^-\times\{-\}\big) \subset \Lpm.
\end{align*}
Then, given $\rho \in \cP (\T_\pm^2)$, we set $f^\pm := \frac{d \rho^\pm}{d \nu}$ as the related densities on $\T^2$, define
\begin{equation*}
  f(x, b) := \left\{ \begin{array}{ll}
    f^+(x)
    & \text{if } b = +1 \\
    f^-(x)
    & \text{if } b = -1
  \end{array} \right.
\end{equation*}
as the density on $\cP (\T_\pm^2)$, and observe from
\begin{multline*}
  \int_A d\rho 
  = \sum_\pm \int_{A^\pm} d\rho^\pm
  = \sum_\pm \int_{A^\pm} f^\pm d\nu
  = \int_{A} f d\cV \\
   \text{for all } A = \big(A^+\times\{+\}\big)\cup \big(A^-\times\{-\}\big) \subset \T_\pm^2
\end{multline*}
that $f = \frac{d \rho}{d \cV}$. Similarly, for given $\rho_\e \in \cP (\Lpm)$, we define $f_\e^\pm$ and $f_\e$ with respect to the volume measures $\nu_\e$ and $\cV_\e$. With these definitions, the square of the left-hand side in the asserted estimate in Theorem \ref{t:MFetoMF} reads as (removing the time variable)
\begin{equation*}
  \sum_\pm\left\| f_\e^\pm - f^\pm \right\|_{L^2(\nu_\e)}^2
  = \sum_\pm \int_{\L} (f_\e^\pm - f^\pm)^2 d\nu_\e
  = \int_{\Lpm} (f_\e - f)^2 d\cV_\e
  = \| f_\e - f \|_{L^2(\cV_\e)}^2.
\end{equation*}

Second, we introduce the generators in the right-hand sides of \eqref{RW:FP-MF} and~\eqref{MF}. Recall that the function $F:\T_\pm^2\times \mathcal P(\T_\pm^2) \to \R^2$ defined in \eqref{RW-MF:Rates} is given by 
\begin{equation*}
 F(x,b;\rho) := -b\nabla V_\delta*\big(\rho^+-\rho^-\big)(x).
\end{equation*}
When considering $\MFe$, for any $\rho\in \mathcal P(\T_\pm^2)$ and $\sigma_\e \in \mathcal P(\Lpm)$, we introduce 
\[
A_\e[\rho] \frac{d \sigma_\e}{d \cV_\e} (\ell,b) := \frac1{\e\beta} \sum_{h\in \Nhd_\e} D_{-h} 
  \Bigl[ \frac{d \sigma_\e}{d \cV_\e} \exp\bigl( \tfrac12 \beta h \cdot F(\,\cdot\,; \rho) \bigr) \Bigr](\ell,b) 
  \qquad \text{for }(\ell,b)\in \Lpm, 
\]
so that $\MFe$ reads as 
\begin{equation} \label{eq:MF-MFe-eqns-fe}
  \partial_t f_\e = A_\e [\rho_\e] f_\e\qquad\text{on } \Lpm.
\end{equation}
Similarly, when considering $\MF$, for any $\rho,\sigma\in \mathcal P(\T_\pm^2)$, we set 
\[
A[\rho]\frac{d \sigma}{d \cV} := -\div\Bigl(\frac{d \sigma}{d \cV} F(\,\cdot\,; \rho)\Bigr)
+ \beta^{-1} \Delta \frac{d \sigma}{d \cV}
\qquad \text{on } \T^2_\pm
\]
and note that $\MF$ reads as
\begin{equation} \label{eq:MF-MFe-eqns-f}
  \partial_t f = A[\rho]f \qquad \text{on } \T^2_\pm.   
\end{equation}
\bigskip

Next we sketch the proof of Theorem \ref{t:MFetoMF}. Given the setting of the theorem, we first argue that the densities $f$ and $f_\e$ are uniquely defined through \eqref{eq:MF-MFe-eqns-fe} and \eqref{eq:MF-MFe-eqns-f}. Since \eqref{eq:MF-MFe-eqns-fe} is a system of ODEs of size $2/\e^2$ with regular right-hand side (which is moreover bounded since the mass of $\rho_\e$ is conserved), $f_\e$ is indeed uniquely defined for all $t \in [0,T]$. Regarding \eqref{eq:MF-MFe-eqns-f}, since the nonlinear part in the right-hand side is regular and contains derivatives only up to first order, it follows from the regularity theory of semilinear parabolic PDEs \cite{LSU1968} that \eqref{eq:MF-MFe-eqns-f} admits a unique, global in time classical solution $f$.

With $f$ and $f_\e$ characterised, we obtain from \eqref{eq:MF-MFe-eqns-fe} and \eqref{eq:MF-MFe-eqns-f} that
\begin{align}
\label{eq:time-deriv-MF-MFe}
\frac12 \frac d{dt} \big\|f_\e(t)-f(t)\big\|^2 _{L^2(\cV_\e)} 
&= \big( f_\e - f , A_\e [\rho_\e] f_\e - A [\rho] f \bigr)_{L^2(\cV_\e)}.
\end{align}
We split the difference of the generators as 
\begin{align} \label{T123}
A_\e [\rho_\e] f_\e - A [\rho] f
= \underbrace{A_\e [\rho_\e] (f_\e - f)}_{T_1}
 + \underbrace{(A_\e[\rho_\e] - A[\rho_\e])f}_{T_2} 
 + \underbrace{(A[\rho_\e] - A[\rho])f}_{T_3},
\end{align}
and treat the three terms individually in the lemmas below. The constants $C$ appearing in these lemmas share the same independency of the parameters as the constants appearing in Theorem \ref{t:MFetoMF}. The two first lemmas are direct analogues of the stability and consistency lemmas in Section~\ref{sec:RWtoSDE}.

\begin{lem}[Estimate of $T_1$:\ stability]
\label{l:T1}
There exists $C>0$ such that for all $\rho_\e\in \mathcal P(\Lpm)$ and all $g\in L^2(\cV_\e)$, we have 
\[
\big(A_\e[\rho_\e] g, g \big)_{L^2(\cV_\e)}
\leq C \beta \delta^{-2} \|g\|^2_{L^2(\cV_\e)}.
\]
\end{lem}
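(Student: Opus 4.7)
The proof will closely mirror that of Lemma~\ref{l:staby}, exploiting the fact that $A_\e[\rho_\e]$ is structurally the same operator as $\Omega_\e^*$ but now acting on functions on the single-particle space $\Lpm$ rather than on the $n$-particle space $\L^n$. Since $\rho_\e$ is fixed throughout, I would first observe that the drift field $F(\,\cdot\,, \pm; \rho_\e)$ can be treated as a given smooth vector field on $\Lpm$ satisfying, by Assumption~\ref{ass:Vd} and the fact that $\rho^+ - \rho^-$ has total variation at most $2$,
\[
\| F(\,\cdot\,, \pm; \rho_\e) \|_\infty \leq 2\mathsf C_V \delta^{-1} \quad \text{and} \quad \| \nabla_\ell F(\,\cdot\,, \pm; \rho_\e) \|_\infty \leq 2\mathsf C_V \delta^{-2}.
\]

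I would then Taylor-expand the exponential appearing in the definition of $A_\e[\rho_\e]$, exactly as in \eqref{eq:Rate_Expansion}. Using Assumption~\ref{ass:beta-e-delta-bound} the argument of the exponential is uniformly bounded ($|\tfrac12\beta h\cdot F|\lesssim \beta\e/\delta\lesssim 1$), so the expansion yields $A_\e[\rho_\e] g = L_{\e,0} g + L_{\e,1} g + R_\e g$ with
\[
L_{\e,0} g := \tfrac1{\beta\e}\!\!\sum_{h\in\Nhd_\e}\!\! D_{-h} g,\quad L_{\e,1} g := \tfrac1{2\e}\!\!\sum_{h\in\Nhd_\e}\!\! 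D_{-h}\bigl( (h\cdot F)\, g\bigr),\quad R_\e g := \tfrac1{\beta\e}\!\!\sum_{h\in\Nhd_\e}\!\! D_{-h}(r_h g),
\]
where the residual satisfies $\|r_h\|_\infty \lesssim \beta^2\e^2\|F\|_\infty^2$.

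Next I would estimate $(L_{\e,0}g,g)_{L^2(\cV_\e)}$, $(L_{\e,1}g,g)_{L^2(\cV_\e)}$ and $(R_\e g,g)_{L^2(\cV_\e)}$ separately, following the summation-by-parts arguments in \eqref{eq:T1_stab}, \eqref{eq:T2_stab} and \eqref{eq:T3_stab}. Using translation invariance of the sum over $\L$ and the inversion symmetry of $\Nhd_\e$ yields
\[
(L_{\e,0}g,g)_{L^2(\cV_\e)} = -\tfrac{\e^2}{2\beta}\sum_{(\ell,b)\in\Lpm}\sum_{h\in\Nhd_\e}|D_{-h}g(\ell,b)|^2 \leq 0,
\]
while the same manipulation applied to $L_{\e,1}$ produces a discrete divergence of $F$, giving
\[
(L_{\e,1}g,g)_{L^2(\cV_\e)} \lesssim \|\nabla\cdot F\|_\infty\, \|g\|_{L^2(\cV_\e)}^2 \lesssim \delta^{-2}\|g\|_{L^2(\cV_\e)}^2.
\]
For $R_\e$, exactly the argument of \eqref{eq:T3_stab} gives
\[
(R_\e g,g)_{L^2(\cV_\e)} \lesssim \beta \|F\|_\infty^2\, \|g\|_{L^2(\cV_\e)}^2 \lesssim \beta\delta^{-2}\|g\|_{L^2(\cV_\e)}^2.
\]
Summing the three contributions and using the lower bound $\beta \geq C$ from Assumption~\ref{ass:beta-e-delta-bound} to absorb the pure $\delta^{-2}$ term into $\beta\delta^{-2}$ yields the claimed bound.

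There is no real obstacle here beyond careful bookkeeping: the argument is a single-particle (and $\rho_\e$-frozen) version of the stability estimate of Lemma~\ref{l:staby}, and the factor $n$ that appears there is absent because we are working on $\Lpm$ rather than on $\L^n$. The only subtlety worth emphasising in the write-up is that, although $A_\e[\rho_\e]$ is \emph{defined} nonlinearly through $\rho_\e$, once $\rho_\e$ is fixed the estimate is linear in $g$, which is precisely what will be needed when applying Gronwall's Lemma to \eqref{eq:time-deriv-MF-MFe} via the splitting \eqref{T123}.
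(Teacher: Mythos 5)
Your proof is correct and follows essentially the same route as the paper: both proofs apply the decomposition and summation-by-parts argument of Lemma~\ref{l:staby} with $n=1$ (treating $F(\,\cdot\,,\pm;\rho_\e)$ as a frozen drift field, since $A_\e[\rho_\e]$ then coincides structurally with $\Omega_\e^*$), and then absorb the $\delta^{-2}$ term into $\beta\delta^{-2}$ via $\beta\geq C$ from Assumption~\ref{ass:beta-e-delta-bound}. The only quibble is cosmetic: $\|\rho^+_\e-\rho^-_\e\|_{\mathrm{TV}}\leq 1$ (not $2$) since $\rho_\e\in\mathcal P(\Lpm)$, but this affects only an absolute constant.
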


\begin{lem}[Estimate of $T_2$:\ consistency]
\label{l:T2}
There exists $C>0$ with the following property. Let $\phi \in C^4(\T_\pm^2)$ and $\rho_\e\in \mathcal P(\Lpm)$. Then
\[
\max_{(\ell,b) \in \Lpm} \big| ( A_\e[\rho_\e] - A[\rho_\e] ) \phi (x,b) \big|
\leq \frac{C\,\e^2\,\beta^3}{\delta^4} \|\phi\|_{4,\infty}.
\]
\end{lem}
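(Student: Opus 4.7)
The plan is to prove Lemma \ref{l:T2} by essentially replaying the Taylor expansion argument used in Lemma \ref{l:consis}, now specialised to the single-particle setting on $\T^2_\pm$. The only things that change are cosmetic (no factor of $n$ from summing over particle indices) and the fact that the drift $F(\,\cdot\,;\rho_\e)$ inherits its regularity from Assumption \ref{ass:Vd} through
\[
  \|d^k F(\,\cdot\,,b;\rho_\e)\|_\infty \leq \|d^{k+1} V_\delta\|_\infty \cdot \big(\|\rho_\e^+\|_{TV}+\|\rho_\e^-\|_{TV}\big) \leq 2 \mathsf C_V \delta^{-(k+1)},
\]
uniformly in $\rho_\e \in \cP(\Lpm)$, since $\rho_\e$ is a probability measure. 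This uniform control is the only property of $\rho_\e$ that will be used.

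First I would Taylor expand the exponential appearing in $A_\e[\rho_\e]$ as in \eqref{eq:Rate_Expansion}:
\[
  \mathcal{R}_h^\e(x,b;\rho_\e)
  = \frac1{\beta\e^2}\sum_{m=0}^{3}\frac{1}{m!}\Big(\tfrac12\beta\, h\cdot F(x,b;\rho_\e)\Big)^m
  + \mathcal O\!\Big(\tfrac{\beta^3}{\e^2}|h\cdot F|^4\Big),
\]
where Assumption \ref{ass:beta-e-delta-bound} ensures $|\tfrac12\beta h\cdot F| \lesssim \beta \e/\delta \lesssim 1$, so the remainder is controlled. This decomposes $A_\e[\rho_\e]$ as a sum of one-particle analogues $L^{(1)}_{\e,m}$, $m=0,1,2,3$, of the operators in \eqref{Om:split}, plus a remainder of order $\beta^3 \e^2 \|F^4\phi\|_\infty$.

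Next I would expand each $L^{(1)}_{\e,m}\phi$ using the finite-difference expansion
\(
  D_{-h}g(\ell)=\tfrac{1}{\e}\sum_{k\geq 1}\tfrac{1}{k!}(-h\cdot\nabla)^k g(\ell)+\text{Taylor remainder},
\)
and exploit the inversion symmetry of $\Nhd_\e$ in \eqref{eq:Nhd} to kill odd-order contributions exactly as in the derivations of \eqref{eq:L0e_expansion}--\eqref{eq:L2e_expansion}. This yields
\begin{align*}
  L^{(1)}_{\e,0}\phi &= \beta^{-1}\Delta\phi + \mathcal O\big(\e^2\beta^{-1}\|\phi\|_{4,\infty}\big),\\
  L^{(1)}_{\e,1}\phi &= -\div(F\phi) + \mathcal O\big(\e^2 \|F\phi\|_{3,\infty}\big),\\
  L^{(1)}_{\e,2}\phi &= \mathcal O\big(\e^2 \beta \|F^{2}\phi\|_{2,\infty}\big),\\
  L^{(1)}_{\e,3}\phi &= \mathcal O\big(\e^2 \beta^2 \|F^{3}\phi\|_{1,\infty}\big).
\end{align*}
The leading terms $\beta^{-1}\Delta\phi-\div(F\phi)$ cancel exactly against $A[\rho_\e]\phi$, leaving only the error terms.

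To conclude, I would bound each product $\|F^m\phi\|_{k,\infty}$ with $k+m\leq 4$ by applying the product rule and the bound $\|d^j F\|_\infty \lesssim \delta^{-(j+1)}$ above, giving $\|F^m\phi\|_{k,\infty}\lesssim \delta^{-(m+k)}\|\phi\|_{k,\infty}\lesssim \delta^{-4}\|\phi\|_{4,\infty}$. Summing the five contributions produces
\[
  \big|(A_\e[\rho_\e]-A[\rho_\e])\phi(\ell,b)\big|
  \lesssim \e^2 \sum_{m=0}^{4} \beta^{m-1}\delta^{-4}\|\phi\|_{4,\infty},
\]
and the dominant $m=4$ term gives the claimed bound $C\e^2 \beta^3 \delta^{-4}\|\phi\|_{4,\infty}$. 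The main obstacle is really only careful bookkeeping of the various $\beta$ and $\delta^{-1}$ powers across the five contributions; the analytical content is essentially identical to that of Lemma \ref{l:consis}, with $n$ replaced by $1$ because we are on $\T^2$ rather than $\T^{2n}$.
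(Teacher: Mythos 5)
Your proposal is correct and follows essentially the same route as the paper, which itself simply invokes Lemma~\ref{l:consis} with $n=1$ and the single-particle drift $F(\,\cdot\,,b;\rho_\e)$ and then bounds the resulting terms $\|d^{4-m}(\phi (F_j)^m)\|_\infty$ by $\delta^{-4}\|\phi\|_{4,\infty}$ using Assumption~\ref{ass:Vd} and $\beta\geq C$ from Assumption~\ref{ass:beta-e-delta-bound}. Your re-derivation of the one-particle analogues of $L_{\e,m}$, the cancellation against $A[\rho_\e]\phi$, and the product-rule estimate $\|F^m\phi\|_{k,\infty}\lesssim\delta^{-(k+m)}\|\phi\|_{k,\infty}$ are the same bookkeeping in slightly expanded form (note you also need $\beta\geq C$ to absorb the lower-order powers $\beta^{m-1}$, $m<4$, into the final $\beta^3$, as the paper does).
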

\bigskip

The third lemma is an extra ingredient that arises from the nonlinearity of equations $\MF$ and $\MFe$.

\begin{lem}[Estimate of $T_3$:\ nonlinearity]
\label{l:T3}
There exists $C>0$ such that for any $\phi \in C^1(\T^2)$, $\rho_\e\in \mathcal P(\Lpm)$ and $\rho\in \mathcal P(\T_\pm^2)$ with $f \in C^2(\T^2)$, we have 
\begin{multline*}
\max_{(x,b) \in \T^2_\pm} \big| ( A[\rho_\e] - A[\rho] ) \phi (x,b) \big|
\leq \frac{C}{\delta^4} \Big[ \delta \|d\phi\|_\infty + \|\phi\|_\infty\Big]\Big(\|f_\e - f\|_{L^2(\cV_\e)}
+ \e^2 \|d^2f\|_{\infty} \Big).
\end{multline*}
\end{lem}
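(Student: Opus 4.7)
The proof would rest on the observation that $A[\rho]\phi$ is affine in $\rho$: the diffusion part $\beta^{-1}\Delta\phi$ does not depend on $\rho$ at all, so it cancels from $A[\rho_\e]\phi-A[\rho]\phi$; the drift depends linearly on $\rho$ through $F(\cdot,b;\rho)=-b\,\nabla V_\delta*(\rho^+-\rho^-)$. Setting
\[
G(x,b):=F(x,b;\rho_\e)-F(x,b;\rho)=-b\,\nabla V_\delta*\bigl[(\rho_\e^+-\rho^+)-(\rho_\e^--\rho^-)\bigr](x),
\]
I would write
\[
(A[\rho_\e]-A[\rho])\phi(x,b)=-\nabla\phi(x,b)\cdot G(x,b)-\phi(x,b)\,\div_x G(x,b),
\]
so that the task reduces to pointwise estimates $\|G\|_\infty\leq C\delta^{-3}R$ and $\|\div_x G\|_\infty\leq C\delta^{-4}R$ with $R:=\|f_\e-f\|_{L^2(\cV_\e)}+\e^2\|d^2 f\|_\infty$; combining these with $|\nabla\phi|\leq\|d\phi\|_\infty$ and $|\phi|\leq\|\phi\|_\infty$ then produces the $\delta^{-4}[\delta\|d\phi\|_\infty+\|\phi\|_\infty]$ prefactor of the claim.

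Both $G$ and $\div_x G$ are convolutions of a smooth kernel against the signed difference $\rho_\e^\pm-\rho^\pm$, so it suffices to estimate, for a generic smooth kernel $h$ on $\T^2$, the quantity $\int h(x-y)\,d(\rho_\e^\pm-\rho^\pm)(y)$. The natural numerical-analysis splitting is
\[
\int h(x-y)\,d(\rho_\e^\pm-\rho^\pm)(y)=\underbrace{\e^2\!\sum_{\ell\in\L}h(x-\ell)(f_\e^\pm-f^\pm)(\ell)}_{I_1(x)}+\underbrace{\e^2\!\sum_{\ell\in\L}h(x-\ell)f^\pm(\ell)-\int h(x-y)f^\pm(y)\,dy}_{I_2(x)}.
\]
The discrete term $I_1$ is controlled by Cauchy--Schwarz on $(\L,\nu_\e)$, giving $|I_1(x)|\leq\|h\|_\infty\,\|f_\e^\pm-f^\pm\|_{L^2(\nu_\e)}$. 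The term $I_2$ is the midpoint-rule quadrature error: cube by cube, Taylor-expand $y\mapsto h(x-y)f^\pm(y)$ to second order around $\ell$, note that the constant term reproduces the Riemann sum exactly and that the linear term integrates to zero by the symmetry of the cube $\ell+[-\e/2,\e/2)^2$, and bound the remainder to obtain $|I_2(x)|\leq C\e^2\bigl\|d^2[h(x-\cdot)f^\pm]\bigr\|_\infty$. The required $C^2$ regularity of $f^\pm$ is supplied by the semilinear-parabolic regularity of solutions to $\MF$.

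Plugging in $h=\nabla V_\delta$ (for $G$) and $h=\Delta V_\delta$ (for $\div_x G$), using Assumption~\ref{ass:Vd} to bound $\|d^k V_\delta\|_\infty\leq\mathsf C_V\delta^{-k}$ up to $k=4$, expanding the quadrature bound $\|d^2[h(x-\cdot)f^\pm]\|_\infty$ via Leibniz and collecting all powers of $\delta^{-1}$ into the $\delta^{-4}$ prefactor, and finally summing over the two signs using $\|f_\e-f\|^2_{L^2(\cV_\e)}=\sum_\pm\|f_\e^\pm-f^\pm\|^2_{L^2(\nu_\e)}$, would produce the asserted estimates on $\|G\|_\infty$ and $\|\div_x G\|_\infty$ and hence close the proof.

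The only genuinely delicate step is the quadrature estimate for $I_2$: it relies on having second-order regularity of $f$ and on the symmetric centring of the cubes about lattice points to kill the first-order Taylor term. Everything else is linear algebra and bookkeeping of powers of $\delta^{-1}$; the deliberately loose $\delta^{-4}$ prefactor in the claim is designed precisely to absorb the lower-order terms in the Leibniz expansion without having to track them individually.
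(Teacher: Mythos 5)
Your proposal is correct and follows essentially the same route as the paper's proof: both exploit the linearity of $F$ in $\rho$ to cancel the diffusion term, then split the resulting convolution error into a discrete-density part (bounded by Cauchy--Schwarz against $\|f_\e-f\|_{L^2(\cV_\e)}$) and a midpoint-quadrature part (bounded via second-order Taylor expansion and the centring symmetry of the lattice cells). The only cosmetic difference is that the paper packages these two estimates once as a bound on the dual norm $\|\rho_\e-\rho\|_{2,\infty}^*$ and then pairs it with $\|\nabla V_\delta\|_{2,\infty}$ and $\|\Delta V_\delta\|_{2,\infty}$, whereas you carry out the splitting kernel by kernel; the underlying computations are the same.
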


The application of these lemmas requires the solution $\rho$ to $\MF$ to be sufficiently regular. The \emph{nonlinear} problem $\MF$ has similar regularity properties as the linear counterpart $\FP$ introduced in \eqref{FP}; in particular, we have the following result.

\begin{lem}[Regularity estimates of the solution $f$]
\label{l:reg-MF}
There exist universal constants $C_k>0$ such that for any solution $\rho$ of $\partial_t f = A[\rho] f$ with initial datum $\rho^\circ$, we have
\begin{subequations}\label{est:reg-MF}
\begin{align}\label{est:reg-MF:k0}
\|f(t)\|_\infty &\leq C_0 \|f^\circ\|_\infty\mathrm{e}^{\gamma t}, \\\label{est:reg-MF:k1}
\|d f(t)\|_\infty &\leq C_1 \Big( \|f^\circ\|_{1, \infty}\mathrm{e}^{\gamma t} + \sqrt \beta \delta^{-3} \|f^\circ\|_\infty \sqrt t\mathrm{e}^{2 \gamma t} \Big), \\\label{est:reg-MF:k234}
\|d^k f(t)\|_\infty &\leq C_k\mathrm{e}^{2(k+2)\gamma T}
\qquad \text{for $k=0, \ldots, 4$,}
\end{align}
\end{subequations}
with $\gamma = 16 \mathsf C_V^2 \beta / \delta^2$.
\end{lem}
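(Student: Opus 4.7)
The proof strategy will mirror that of Lemma~\ref{l:rho:est}, exploiting a Duhamel representation against the heat kernel $\Phi_\beta$ on $\T^2$ combined with the fractional Gronwall estimate of Lemma~\ref{l:Gron:frac}. The key observation making this viable despite the nonlinearity is that since $\rho^\pm$ are probability (sub)measures, Young's convolution inequality gives $\|d^k F(\,\cdot\,; \rho)\|_\infty \leq \mathsf C_V \delta^{-k-1}$ uniformly in $\rho$ for all $k = 0, \dots, 4$; hence the nonlinear term in $A[\rho]f$ enjoys exactly the same pointwise derivative bounds as a linear transport term with a smooth $\delta$-dependent drift.

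I would begin by writing, separately on each component $\T^2_\pm$, the mild formulation
\begin{equation*}
f(t,\cdot,b) = \Phi_\beta(t)*f^\circ(\cdot,b) - \int_0^t \nabla\Phi_\beta(t-s)*\bigl(f(s,\cdot,b)\,F(\cdot,b;\rho(s))\bigr)\,ds,
\end{equation*}
obtained by integrating the divergence-form transport term by parts under the convolution. For the base case $k=0$, taking the $L^\infty$ norm and using $\|\Phi_\beta(t)\|_{L^1(\nu)}=1$ together with $\|\nabla\Phi_\beta(t-s)\|_{L^1(\nu)} \leq 2\sqrt{\beta/(\pi(t-s))}$ from Lemma~\ref{l:GradPhiBeta}, plus the uniform bound $\|F\|_\infty \leq \mathsf C_V/\delta$, yields
\begin{equation*}
\|f(t)\|_\infty \leq \|f^\circ\|_\infty + \frac{2\mathsf C_V}{\delta}\sqrt{\frac{\beta}{\pi}}\int_0^t \frac{\|f(s)\|_\infty}{\sqrt{t-s}}\,ds,
\end{equation*}
and Lemma~\ref{l:Gron:frac} with $C = 2\mathsf C_V\delta^{-1}\sqrt{\beta/\pi}$ (so $C^2\pi = 4\mathsf C_V^2\beta\delta^{-2} \leq \gamma$) delivers \eqref{est:reg-MF:k0}.

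For the sharp first-order estimate \eqref{est:reg-MF:k1}, I would differentiate the mild formulation once in space, expand $\partial_{i}(Ff)$ by the product rule, and estimate the resulting two integrals separately: the term containing $\partial_i F$ picks up a factor $\mathsf C_V\delta^{-2}$ and is controlled by the $k=0$ bound $G_0(t)$, contributing the $\sqrt t\,\delta^{-3}$ term after evaluating $\int_0^t G_0(s)/\sqrt{t-s}\,ds \leq 2\sqrt{t}\,G_0(t)$, while the term containing $\partial_i f$ is absorbed via another application of Lemma~\ref{l:Gron:frac}. Iterating this scheme for $k=2,3,4$ — differentiating the mild formulation $k$ times, distributing the derivatives over $F$ and $f$ via Leibniz, and using the previous orders $\|d^jf\|_\infty$ for $j<k$ inside the integral — yields a recursion analogous to \eqref{e:bl:defn} which, after absorbing the prefactorial polynomials $(\gamma T)^{k/2}$ and $\gamma^k$ into the exponential exactly as in the proof of Corollary~\ref{c:rho:est}, collapses to the coarse bound \eqref{est:reg-MF:k234}.

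The main subtlety, as opposed to an obstacle, lies in justifying the classical regularity $f(t,\cdot) \in C^4(\T^2_\pm)$ needed to write these pointwise identities: this follows from the smoothness of $V_\delta$ and standard parabolic bootstrap (\cite{LSU1968}), which the paper has already invoked to obtain the classical solution. A minor bookkeeping point is that the $n^2$ appearing in the definition of $\gamma$ in Corollary~\ref{c:rho:est} is absent here, because the mean-field force $F(\,\cdot\,;\rho)$ is bounded in terms of $\mathsf C_V/\delta$ alone rather than the $n$-particle force $\bF$; this is precisely why $\gamma = 16\mathsf C_V^2\beta\delta^{-2}$ in the present statement.
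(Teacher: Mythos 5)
Your approach is correct and is essentially the same as the paper's; in fact the paper dispenses with the re-derivation entirely and simply observes that Lemma~\ref{l:rho:est} and Corollary~\ref{c:rho:est} never use anything about the force beyond the bound $\|d^k F^\pm\|_\infty \leq \mathsf C_V \delta^{-k-1}$, which is exactly the uniform-in-$\rho$ bound you identify via Young's inequality, and so those results apply verbatim with $n=1$ (whence $\gamma = 16\mathsf C_V^2\beta\delta^{-2}$ as you note). You are therefore re-proving something the paper merely cites, which is fine; your constants in the $k=0$ step are off by a harmless factor (the paper gets $C^2\pi = \gamma$ exactly by summing over both coordinate directions of $\partial_m\Phi_\beta$), but this only makes the bound stronger. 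The one step you gloss over is the last clause of the paper's proof: estimate \eqref{est:reg-MF:k234} carries no $\|f^\circ\|_{k,\infty}$ factor, whereas Corollary~\ref{c:rho:est} does, and eliminating it requires invoking the standing assumption that $\|f^\circ\|_{k,\infty}$ is polynomially bounded in $\beta,\delta^{-1}$ so that it can be absorbed as an extra $\mathrm{e}^{O(\gamma T)}$ factor; your proposal should make this explicit rather than only mentioning absorption of the $(\gamma T)^{k/2}$ and $\gamma^k$ prefactors.
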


Leaving the proofs of these four lemmas to the following section,
we complete the proof of Theorem~\ref{t:MFetoMF}. Let
\begin{equation*}
  v_\e^2(t) := \|f_\e (t) - f (t)\|^2_{L^2(\nu_\e^\pm)},
\end{equation*}
Recalling \eqref{eq:time-deriv-MF-MFe} and \eqref{T123}, we estimate the inner products of $f_\e (t) - f (t)$ with $T_i$ by applying Lemmas~\ref{l:T1}--\ref{l:T3}. This yields 
\begin{align*}
\frac d{dt}  v_\e^2(t)
&\lesssim
\beta \delta^{-2} v_\e^2(t)
+ \frac{\e^2\,\beta^3}{\delta^4} v_\e(t) \|f(t)\|_{4,\infty} \\
&\qquad {}+ \frac{1}{\delta^4} \; \Big( \big( \delta \|df(t)\|_\infty + \|f(t)\|_\infty \big) \; \Big( v_\e(t) + \e^2 \|d^2 f(t)\|_{\infty} \Big) v_\e(t) \\
&= \delta^{-4} \big(\beta \delta^2 + \delta \|df(t)\|_\infty + \|f(t)\|_\infty \big) v_\e^2(t) \\
&\qquad {}+ \e^2 \delta^{-4} \Big( \beta^3 \|f(t)\|_{4,\infty} + \big(\delta \|df(t)\|_\infty + \|f(t)\|_\infty \big) \|d^2 f(t)\|_{\infty} \Big) v_\e(t).
\end{align*}

Next we estimate the right-hand side further before applying Gronwall's Lemma. Since the main contribution in the final result comes from the prefactor of $v_\e(t)^2$, we apply fine estimates to control it, and use more rough estimates to bound the prefactor of $v_\e(t)$. In particular, using \eqref{est:reg-MF:k0} and \eqref{est:reg-MF:k1} in Lemma~\ref{l:reg-MF}, we obtain
\begin{align*}
  &\beta \delta^2 + \delta \|df(t)\|_\infty + \|f(t)\|_\infty \\
  &\lesssim \beta \delta^2 
      + \delta \| f^\circ \|_{1,\infty}\mathrm{e}^{\gamma t} 
      + \sqrt\beta \delta^{-2} \| f^\circ \|_{\infty} \sqrt t\mathrm{e}^{2 \gamma t} 
      + \| f^\circ \|_{\infty}\mathrm{e}^{\gamma t} \\
  &\lesssim \big( \beta \delta^2 + \delta \| f^\circ \|_{1,\infty} + \sqrt{\beta T} \delta^{-2} \| f^\circ \|_{\infty} \big)\mathrm{e}^{2 \gamma t}.
\end{align*}
For the prefactor of $v_\e(t)$, it suffices to apply \eqref{est:reg-MF:k0}. This yields
\begin{multline*}
  \delta^{-4} \Big( \beta^3 \|f(t)\|_{4,\infty} + \big(\delta \|df(t)\|_\infty + \|f(t)\|_\infty \big) \|d^2 f(t)\|_{\infty} \Big) \\
  \lesssim \delta^{-4} \Big( \beta^3\mathrm{e}^{12 \gamma T} + \big(\delta\mathrm{e}^{6 \gamma T} +\mathrm{e}^{4 \gamma T} \big)\mathrm{e}^{8 \gamma T} \Big)
  \lesssim\mathrm{e}^{15 \gamma T},
\end{multline*}
where we have estimated negative powers of $\delta$ by $e^{\gamma T}$. 
Collecting the estimates, we obtain
\begin{equation*}
  \frac d{dt}  v_\e^2(t)
  \lesssim K_2\mathrm{e}^{2 \gamma t} v_\e^2(t) + \e^2\mathrm{e}^{15 \gamma T} v_\e(t), \qquad K_2 := \beta \delta^{-2} + \sqrt{\beta T} \delta^{-6} \| f^\circ \|_{\infty} + \delta^{-3} \| f^\circ \|_{1,\infty}.
\end{equation*}
Applying Cauchy-Schwarz to the second term and using that $K_2\mathrm{e}^{2 \gamma t} \geq 1$, we find that
\begin{align*}
\frac d{dt}  v_\e^2(t)
\leq C K_2\mathrm{e}^{2 \gamma t} v_\e^2(t) + C' \e^4\mathrm{e}^{30 \gamma T}.
\end{align*}
Finally, we apply Gronwall's Lemma. This yields
\begin{align*}
 v_\e^2(t)
\leq \big( v_\e(0)^2 + C' \e^4\mathrm{e}^{30 \gamma T} t \big) \exp \big( 2 C K_2 \gamma\mathrm{e}^{2 \gamma t} \big)
\end{align*}
The assertion of Theorem~\ref{t:MFetoMF} follows by observing that
\begin{equation*}
  K_2 \gamma \lesssim \beta \delta^{-4} \big( \delta^{-1} \| f^\circ \|_{1,\infty} + \sqrt{\beta T} \delta^{-4} \| f^\circ \|_{\infty} + \beta \big).
\end{equation*}

\subsection{Proofs of auxiliary results}
%

For the proofs of Lemmas \ref{l:T1} and \ref{l:T2} below, we set
$$F^\pm := F(\cdot,\pm1;\rho_\e):\T^2\to\R^2.$$

\subsubsection{Proof of Lemma~\ref{l:T1}}
We apply an argument similar to that used in the proof of Lemma~\ref{l:staby} with the choice $n=1$. Replacing $\bF$ in this argument by $F^\pm$ as defined above, the operator $A[\rho_\e]$ acting on functions with $b=\pm1$ fixed coincides with $\Omega_\e^*$ in~\eqref{FPe-F}. Following the argument of the proof of Lemma~\ref{l:staby} then directly implies that
\[
\big(A_\e[\rho_\e] g^\pm, g^\pm \big)_{L^2(\nu_\e)}
\leq
C \big(\|d F\|_\infty + \beta \|F\|^2_\infty \big) \|g^\pm \|^2_{L^2(\nu_\e)}.
\]
Applying Assumption~\ref{ass:Vd} and using $\beta \geq C$ from Assumption \ref{ass:beta-e-delta-bound}, we find that 
\[
\big(A_\e[\rho_\e] g, g\big)_{L^2(\cV_\e)}
\leq
C \beta \delta^{-2} \|g\|^2_{L^2(\cV_\e)}.
\]

\subsubsection{Proof of Lemma~\ref{l:T2}}
As in the previous proof, we follow the strategy of the proof of consistency Lemma~\ref{l:consis} applied to the operators $A[\rho_\e]$ and $A[\rho_\e]$ to find
\[
\max_{\ell \in \L} \big|(A_\e[\rho_\e]-A[\rho_\e]) \phi^\pm (\ell) \big|
\leq 
\frac{C \e^2}\beta \sum_{m=0}^4 \beta^m \max_{j=1,2}\big\|d^{4-m} \big( \phi^\pm (F_j^\pm)^m \big) \big\|_\infty.
\]
Using Asssumption~\ref{ass:Vd} and $\beta \geq C$ from Assumption \ref{ass:beta-e-delta-bound}, we find that
\[
\sum_{m=0}^4 \beta^m \max_{j=1,2}\big\|d^{4-m} \big( \phi^\pm (F_j^\pm)^m \big) \big\|_\infty
\leq C\beta^4 \sum_{k=0}^4 \delta^{k-4}\|d^k \phi\|_\infty
\leq \frac{C\beta^4}{\delta^4} \|\phi\|_{4,\infty}.
\]
The assertion of the lemma now follows.

\subsubsection{Proof of Lemma~\ref{l:T3}}
For the proof of Lemmas \ref{l:T3}, the dependence of $F^\pm$ on $\rho_\e$ becomes important; we set
$$F^\pm(\rho) := F(\cdot,\pm1;\rho):\T^2\to\R^2.$$

Since $A[\rho]\phi = -\div (\phi F(\rho)) + \beta^{-1}\Delta \phi$, and $F$ is linear in $\rho$, we can estimate 
\begin{align*}
\big\|(A[\rho_\e] - A[\rho]) \phi \big\|_\infty
&= 
\big\|\div (\phi F(\rho_\e-\rho))\big\|_\infty\\
&\leq 
\big\| \nabla \phi \cdot F(\rho_\e-\rho)\big\|_\infty +
  \big\| \phi\,\div F(\rho_\e-\rho)\big\|_\infty\\
&\leq \|d\phi\|_\infty \|F(\rho_\e-\rho)\|_\infty + \|\phi\|_\infty \|\div F(\rho_\e-\rho)\|_\infty.
\end{align*}
We continue with the two norms of $F$. Writing $\kappa := (\rho_\e^+ - \rho^+) - (\rho_\e^- - \rho^-)$, 
\begin{align*}
\|F(\rho_\e-\rho)\|_\infty 
&=\|\nabla V_\delta * \kappa \|_\infty 
  \leq \|\nabla V_\delta\|_{2,\infty} \|\kappa\|_{2,\infty}^*
  \leq \frac C{\delta^3} \|\rho_\e - \rho\|_{2,\infty}^*\\
\|\div F(\rho_\e-\rho)\|_\infty 
&=\|\Delta V_\delta * \kappa\|_\infty 
  \leq \|\Delta V_\delta\|_{2,\infty} \|\kappa\|_{2,\infty}^*
  \leq \frac C{\delta^4} \|\rho_\e - \rho\|_{2,\infty}^*.
\end{align*}
We find that 
\begin{equation}
\label{est:MF-MFe-nonlinearity}
\big\|(A[\rho_\e] - A[\rho])\phi \big\|_\infty
\leq 
\frac C{\delta^4}  \Big[ \delta \|d\phi\|_\infty + \|\phi\|_\infty\Big]
 \|\rho_\e - \rho\|_{2,\infty}^*.
\end{equation}

\bigskip

To estimate the norm  $\|\rho_\e - \rho\|_{2,\infty}^*$ we split it as
\begin{equation}
\label{est:MF-MFe-split}
\|\rho_\e - \rho\|_{2,\infty}^*
\leq \|\rho_\e - f \cV_\e \|_{2,\infty}^*
+\|f \cV_\e - \rho\|_{2,\infty}^*.
\end{equation}
The first term is estimated by 
\begin{align}
\|\rho_\e - f \cV_\e\|_{2,\infty}^*
&= \sup_{\|\varphi\|_{2,\infty}\leq 1} \int_{\T^2} \varphi \, (f_\e - f) \, d \cV_\e \notag\\
&= \sup_{\|\varphi\|_{2,\infty}\leq 1} \big(\varphi, f_\e - f \big)_{L^2(\cV_\e)}
  \leq \|f_\e-f\|_{L^2(\cV_\e)} .
\label{est:MF-MFe-2inf-a}
\end{align}
For the second term, let $\{(\ell_i,b_i)\}_{i=1,\dots,2/\e^{2}}$ be an enumeration of $\Lpm$, and let $\{\mathcal C_i\}_{i=1,\dots,2/\e^{2}}$ be the corresponding open, square Voronoi cells in $\T^2_\pm$. The cells $\mathcal C_i$ are disjoint, have $\cV (\mathcal{C}_i) = \e^{2}$,  and $\T^2_\pm\setminus \bigcup_i \mathcal C_i$ is a Lebesgue null set. Each cell $\mathcal C_i$ is centered with respect to $\ell_i$, i.e., $\int_{\mathcal C_i} (x-\ell_i) \, dx = 0$. Then, for any $g \in C^2(\T_\pm^2)$, it follows from the second order Taylor approximation and the symmetry of $\cC_i$ that 
\[
\bigg|\,\frac1{\e^2}\int_{\mathcal C_i} \big(g(\ell_i,b_i) -  g(x,b_i)\big)\, dx\,\bigg|
\leq C\e^2 \|d^2g\|_\infty.
\]
Therefore
\begin{align} \notag
\|f \cV_\e - \rho\|_{2,\infty}^*
&= \sup_{\|\varphi\|_{2,\infty}\leq 1} \int_{\T^2_\pm} \varphi f \, d (\cV_\e - \cV) \\\notag
&= \sup_{\|\varphi\|_{2,\infty}\leq 1} \sum_{i=1}^{2/\e^2} 
  \int_{\mathcal C_i} \big[ (\varphi f)(\ell_i,b_i) - (\varphi f)(x,b_i) \big]\, dx\\\notag
&\leq C \e^4 \sum_{i=1}^{2/\e^2} 
  \sup_{\|\varphi\|_{2,\infty}\leq 1}
  \|d^2( \varphi f)\|_{\infty}\\\label{fvp:2infty}
&\leq C \e^2 \|f\|_{2,\infty}.
\end{align}
Combining this estimate with~\eqref{est:MF-MFe-nonlinearity}, \eqref{est:MF-MFe-split}, \eqref{est:MF-MFe-2inf-a}, and an application of Cauchy--Schwarz we find the assertion of the Lemma. 

\subsubsection{Proof of Lemma~\ref{l:reg-MF}}
Since Lemma~\ref{l:rho:est} and Corollary~\ref{c:rho:est} only require that $F^\pm$ satisfies $\|d^k F^\pm\|_\infty \leq \mathsf C_V\, \delta^{-k-1}$ for $k=0,\dots,4$, we can apply them with $n=1$ to find the same estimate~\eqref{est:reg-MF}. In addition, in \eqref{est:reg-MF:k234} we further rely on the given polynomial bound on $\|f^\circ\|_{k, \infty}$ to absorb it in the exponential.

\section{Proof of Theorem~\ref{t:SDEtoMF}}
\label{s:SDEn:FP-Mark} 
This section is devoted to the proof of Theorem~\ref{t:SDEtoMF}; for convenience, we restate this result here.

\thmSDEMF*

The proof of this result is based on an established strategy for proving `propagation of chaos', i.e., the property that the components of the process $\bX_t$ are approximately independent when $n$ is large. This strategy goes back at least to McKean~\cite{McKean67}; we follow Sznitman's treatment~\cite{Sznitman91,Philipowski07} while generalizing to particles of two signs. Duong and Tugaut~\cite{DuongTugaut18TR} prove a similar result, but impose weaker assumptions and obtain a weaker bound; since we care about the bound, we give a full proof of the result here. 

Overall, the strategy in the proof of Theorem~\ref{t:SDEtoMF} is to carry out the following steps:
\begin{itemize}
  \item We first construct an auxiliary stochastic process $\{ \obX_t \}_{0 \leq t \leq T}$ which is driven by the same noise as $\{ \bX_t \}_{0 \leq t \leq T}$, but in which $\oX_{t,i}$ are all independent and all particles of the same sign are identically distributed.
  \item We split the norm $\E \| \rho_n^\pm (t) - \rho^\pm(t) \|_{1,\infty}^*$ which is our measure of comparison between the law of $\SDE$ and $\MF$ into two parts, by introducing the empirical measure $\orho_n$ of the auxiliary process $\obX$.
  \item One of these parts is bounded via an estimate of $\E \sum_{i=1}^n | X_i - \oX_i |(t)$ using propagation of chaos techniques.
  \item The other part is bounded using the quantitative Glivenko--Cantelli estimate first used by Fournier and Jourdain~\cite{FJ17}. 
  \end{itemize}
  The latter estimates are then combined to complete the proof. As in previous sections, we provide an overview of the proof in the following section, and postpone the proofs of various technical results to the end of the section.

  \subsection{Main argument}

Let $\rho(t)$ and $\{ \bX_t \}_{0 \leq t \leq T}$ be as asserted in Theorem \ref{t:SDEtoMF}. In Sections \ref{sec:RWtoSDE} and \ref{sec:MFetoMF} we prove that $\rho(t)$ and $\{ \bX_t \}_{0 \leq t \leq T}$ are well defined. 
We assume that $n^+, n^- \geq 1$ and $\rho^{\circ,+},\rho^{\circ,-}\not=0$; all alternative cases can be treated with a simplification of the arguments below. 
We set 
\begin{equation} \label{mu:rho}
  \mu_t^\pm := \frac{\rho^\pm(t)}{\|\rho^\pm(t)\|_{\mathrm{TV}}} \in \cP(\T^2)
  \quad \text{for all } t \in [0,T],
\end{equation}
where the $TV$-norm of a non-negative measure is simply the total mass of that measure.

As laid out in the strategy described above, we define the stochastic process $\{ \obX_t \}_{0 \leq t \leq T}$ as the solution to
\begin{equation} \label{oSDE}
  \oSDE \qquad 
  \left\{ \begin{aligned}
    d \oX_i
    &= F \big(\oX_i, b_i; \rho(t)\big) \,dt + \sqrt{2\beta^{-1}}\, d B_i
    && t \in [0,T], \: i = 1, \ldots, n \\
    \obX_0
    &= \bX^\circ,
  \end{aligned} \right.  
\end{equation}
where $B_i$ are the same Brownian Motion processes as in \eqref{SDE}, and $F$ is defined in \eqref{RW-MF:Rates}. Recall that $X^\circ_i$ are assumed to be independently distributed with law proportional to $\rho^{\circ,+}$ if $i\in I^+$ or ${\rho^{\circ,-}}$ if $i\in I^-$ (the index sets $I^\pm$ are defined in \eqref{Ipm}). Before using this process as a tool to prove the estimates we seek, we first state some properties of $\{\obX_t\}_{0\leq t\leq T}$, which are encoded in the following lemma.

\begin{lem}[Properties of $\{ \obX_t \}_{0 \leq t \leq T}$] \label{l:oSDE:props}
Let the stochastic process $\{ \obX_t \}_{0 \leq t \leq T}$ be as defined in \eqref{oSDE}. Then:
\begin{enumerate}[label=(\roman*),ref=\roman*] 
  \item The stochastic processes $\oX_i$ for $i = 1,\ldots,n$ are independent; \label{l:oSDE:props:indep}
  
  \item For any $t \in [0,T]$, the law of $\oX_{t,i}$ with $i\in I^\pm$ is given by $\mu_t^\pm$. \label{l:oSDE:props:law}
\end{enumerate}
\end{lem}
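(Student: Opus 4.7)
The key observation driving the proof is that the system \eqref{oSDE} is \emph{decoupled}: the drift $F(\oX_i, b_i; \rho(t))$ depends on the configuration only through the single coordinate $\oX_i$, since $\rho(t)$ is prescribed deterministically as the solution of $\MF$ and the Burgers vector $b_i$ is fixed in time. Each $\oX_i$ is therefore the strong solution of an autonomous one-particle SDE on $\T^2$ with a time-dependent, globally Lipschitz drift (by Assumption~\ref{ass:Vd} and the regularity of $\rho$), driven by the single Brownian motion $B_i$ and initialised at $X_i^\circ$.

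For (\ref{l:oSDE:props:indep}), the independence is an immediate consequence of this decoupled structure: since $\oX_i$ is a measurable functional of the pair $(X_i^\circ, B_i)$ alone, and the family $\{(X_i^\circ, B_i)\}_{i=1}^n$ is mutually independent by hypothesis (the $X_i^\circ$ independent by assumption and the $B_i$ independent as components of the Brownian motion $\bB$), the processes $\{\oX_i\}_{i=1}^n$ are mutually independent as random elements of $C([0,T]; \T^2)$.

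For (\ref{l:oSDE:props:law}), fix $i \in I^+$; the case $i \in I^-$ is analogous. Let $\sigma_t := \law(\oX_{t,i}) \in \mathcal P(\T^2)$. Applying It\^o's formula to test functions $\varphi \in C^2(\T^2)$ and taking expectations, $\sigma_t$ is a weak solution of the linear Fokker--Planck equation
\begin{equation*}
  \partial_t \sigma = -\div\bigl(\sigma\, F(\cdot, +1; \rho(t))\bigr) + \beta^{-1}\Delta\sigma, \qquad \sigma_0 = \frac{\rho^{\circ,+}}{\|\rho^{\circ,+}\|_{\mathrm{TV}}}.
\end{equation*}
On the other hand, integrating the first equation of $\MF$ over $\T^2$ and using the divergence form of the right-hand side together with the absence of boundary shows that $t\mapsto\|\rho^+(t)\|_{\mathrm{TV}}$ is constant. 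Dividing that equation through by this constant then shows that $\mu_t^+$ from \eqref{mu:rho} satisfies exactly the same PDE with matching initial datum. By uniqueness of classical solutions to this linear parabolic equation on $\T^2$ with smooth coefficients (the drift $F(\cdot,+1;\rho(t))$ being smooth in space and continuous in time), we conclude $\sigma_t = \mu_t^+$ for all $t \in [0,T]$.

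The result is essentially true by construction of \eqref{oSDE}, so no genuine obstacle arises; the only technical points requiring care are the Lipschitz bound on the drift (immediate from Assumption~\ref{ass:Vd}) and the mass conservation property of $\MF$ needed to pass between $\rho^\pm$ and its normalisation $\mu^\pm$, both of which are standard.
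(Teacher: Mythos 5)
Your proof is correct and follows essentially the same route as the paper: part (i) is the decoupling argument (each $\oX_i$ is a measurable functional of $(X_i^\circ, B_i)$ alone, and these pairs are independent), and part (ii) derives the linear Fokker--Planck equation for $\law(\oX_{t,i})$ and invokes uniqueness for the linear parabolic PDE, noting that $\mu_t^\pm$ solves the same problem. The only cosmetic difference is that the paper invokes the Feynman--Kac formula where you apply It\^o's formula directly, and you make explicit the mass-conservation step justifying the passage from $\rho^\pm$ to its normalisation $\mu^\pm$, which the paper leaves implicit.
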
\medskip

\noindent
We now proceed to prove the estimate in Theorem \ref{t:SDEtoMF}. Defining the empirical measure
\begin{equation*}
  \overline \rho_n := \frac1n \sum_{i=1}^n \delta_{(\oX_i, b_i)}
\end{equation*}
and decomposing into $\orho_n^\pm$ as in \eqref{rhon:pm}, we use the triangle inequality to estimate
\begin{equation} \label{est:PoC}
 \E \| \rho_n^\pm(t) - \rho^\pm(t) \|_{1,\infty}^* 
 \leq \E \| \rho_n^\pm(t) - \orho_n^\pm(t) \|_{1,\infty}^* 
      + \E \| \orho_n^\pm(t) - \rho^\pm(t) \|_{1,\infty}^*.
\end{equation}
The two terms on the right hands side are now estimated in turn. To estimate the first of these, we rely on the following propagation of chaos result.

\begin{lem}[Propagation of chaos] \label{l:PoC}
For all $t \in [0,T]$,
\begin{equation*} 
\frac1n \E \sum_{i=1}^n | X_i - \oX_i |(t)
\leq \frac{ 2 \mathsf C_V  \,t }{\delta} \Big(\frac 1{\sqrt n} + \kappa\Big) \, \exp \Big( 2\mathsf C_V \frac t{\delta^2} \Big),
\end{equation*}
where the constant $\mathsf C_V$ is defined in Assumption \ref{ass:Vd}, and $\kappa$ is the mass discrepancy~\eqref{def:gamma-mass-disc}.
\end{lem}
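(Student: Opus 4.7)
My plan is to exploit the fact that $\bX$ and $\obX$ are driven by the \emph{same} Brownian motions and have the same initial datum, so the stochastic integrals cancel when we take the difference; this reduces the problem to a deterministic Gronwall argument on $u(t):=\frac1n\sum_i \E|X_i-\oX_i|(t)$. Specifically, subtracting \eqref{SDE} from \eqref{oSDE} and taking absolute values pointwise gives
\[
|X_i(t)-\oX_i(t)|\leq \int_0^t\bigl|F_i(\bX_s,\bb)-F(\oX_{s,i},b_i;\rho(s))\bigr|\,ds,
\]
because $X^\circ_i=\oX^\circ_i$.

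The next step is the classical Sznitman split of the integrand by adding and subtracting $F_i(\obX_s,\bb)$:
\[
F_i(\bX_s,\bb)-F(\oX_{s,i},b_i;\rho(s))=\underbrace{\bigl[F_i(\bX_s,\bb)-F_i(\obX_s,\bb)\bigr]}_{T_A}+\underbrace{\bigl[F_i(\obX_s,\bb)-F(\oX_{s,i},b_i;\rho(s))\bigr]}_{T_B}.
\]
For $T_A$ I would use that $\nabla V_\delta$ is Lipschitz with constant at most $\mathsf C_V\delta^{-2}$ by Assumption~\ref{ass:Vd}, so from the explicit form \eqref{Fi}
\[
|T_A|\leq \frac{\mathsf C_V}{n\delta^2}\sum_{j\neq i}\bigl(|X_i-\oX_i|+|X_j-\oX_j|\bigr),
\]
which after averaging over $i$ contributes at most $2\mathsf C_V\delta^{-2}u(s)$ to $u'(s)$.

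The main work is estimating $\E|T_B|$, and this is where I expect the main obstacle to lie because it is the step where the mass discrepancy $\kappa$ enters and where one must separately account for the two signs. Conditionally on $\oX_{s,i}$, by Lemma~\ref{l:oSDE:props} the variables $\{\oX_{s,j}\}_{j\neq i,\,j\in I^\sigma}$ are i.i.d.~with common law $\mu_s^\sigma$, and denoting $g_\sigma(x)=\int \nabla V_\delta(x-y)\,d\mu_s^\sigma(y)$ I would rewrite
\[
F_i(\obX_s,\bb)-F(\oX_{s,i},b_i;\rho(s))=-b_i\!\sum_{\sigma=\pm}\!\sigma\,\biggl[\frac1n\!\!\sum_{\substack{j\in I^\sigma\\ j\neq i}}\!\!\bigl(\nabla V_\delta(\oX_{s,i}-\oX_{s,j})-g_\sigma(\oX_{s,i})\bigr)+\Bigl(\tfrac{N_{\sigma,i}}{n}-\|\rho^\sigma(s)\|_{\mathrm{TV}}\Bigr)g_\sigma(\oX_{s,i})\biggr],
\]
where $N_{\sigma,i}=n^\sigma-\mathbf 1_{i\in I^\sigma}$. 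The first piece is the empirical average of centered bounded i.i.d.~variables with variance at most $\mathsf C_V^2\delta^{-2}$, so Cauchy--Schwarz yields an expected absolute value $\leq\mathsf C_V\delta^{-1}n^{-1/2}$; for the second piece, conservation of mass under $\MF$ gives $\|\rho^\sigma(s)\|_{\mathrm{TV}}=\|\rho^{\circ,\sigma}\|_{\mathrm{TV}}$, which combined with the definition \eqref{def:gamma-mass-disc} of $\kappa$ and $|g_\sigma|\leq\mathsf C_V\delta^{-1}$ produces the factor $\kappa+O(n^{-1})$. Summing the two signs, $\E|T_B|\lesssim \mathsf C_V\delta^{-1}(n^{-1/2}+\kappa)$, uniformly in $i$.

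Finally, combining the two bounds and averaging over $i$,
\[
u(t)\leq \int_0^t\Bigl[2\mathsf C_V\delta^{-2}u(s)+2\mathsf C_V\delta^{-1}\bigl(n^{-1/2}+\kappa\bigr)\Bigr]ds,\qquad u(0)=0,
\]
and Gronwall's inequality gives exactly the asserted estimate. The technically delicate point is the independence structure used to get the $n^{-1/2}$ rate in $T_B$: it relies crucially on Lemma~\ref{l:oSDE:props}\ref{l:oSDE:props:indep}--\ref{l:oSDE:props:law}, and on carefully separating the ``wrong mass'' contribution (giving $\kappa$) from the ``wrong distribution'' contribution (giving $n^{-1/2}$), which is where the two-species setting differs from the classical single-sign Sznitman argument.
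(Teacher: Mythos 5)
Your proof is correct and takes essentially the same route as the paper: the same Sznitman-type split (adding and subtracting $F_i(\obX_s,\bb)$), the same Lipschitz estimate for $T_A$, and the same use of the independence and centering of the $\oX_j$ to extract the $n^{-1/2}$ rate, together with the mass-discrepancy $\kappa$ coming from comparing $\|\rho^\sigma(s)\|_{\mathrm{TV}}$ with $n^\sigma/n$. The only cosmetic difference is that the paper packages the centered quantity into a single function $g(x,y,b)$ and expands $\E|\sum_j g|^2$ (with vanishing off-diagonal terms), whereas you condition on $\oX_i$, split by sign $\sigma$, and invoke a variance bound — these are the same argument in different clothing.
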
\medskip

\noindent
Applying Lemma~\ref{l:PoC} to the first term in the right-hand side yields:
\begin{align*}
  \E \| \rho_n^\pm(t) - \orho_n^\pm(t) \|_{1,\infty}^* 
  &= \E \sup_{\|\varphi\|_{1,\infty}\leq 1} 
     \int_{\T^2} \varphi (\rho_n^\pm(t) - \orho_n^\pm(t))\\
  &= \E \sup_{\|\varphi\|_{1,\infty}\leq 1} 
     \frac1n \sum_{i \in I^\pm} \bigl[ \varphi(X_i(t)) - \varphi(\overline X_i(t)) \bigr] \\
  &\leq \frac1n \E \sum_{i \in I^\pm} | X_i(t) - \oX_i(t) | \\
  &\leq \frac{ 2 \mathsf C_V  \,t }{\delta} \Big(\frac 1{\sqrt n} + \kappa \Big) \, \exp \Big( 2\mathsf C_V \frac t{\delta^2} \Big).
\end{align*}

The second term in \eqref{est:PoC} is now estimated via the result of the following lemma.

\begin{lem}[Quantitative Glivenko--Cantelli estimate]
  \label{l:GlivenkoCantelli}
We have
\begin{equation*}\E \| \orho_n^\pm(t) - \rho^\pm(t) \|_{1,\infty}^*
\leq
\kappa + 
C \frac{\log n}{\sqrt n}\, \|\rho^\pm(t)\|_{\mathrm {TV}},
\end{equation*}
for some universal constant $C > 0$.
\end{lem}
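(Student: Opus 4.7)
\emph{Plan.} The plan is to exploit the i.i.d.\ structure of $\{\oX_i(t)\}_{i\in I^\pm}$ guaranteed by Lemma~\ref{l:oSDE:props}, which tells us that these variables are independent and share the common law $\mu_t^\pm$ defined in~\eqref{mu:rho}. Mass is conserved under $\MF$ (integrate both equations of~\eqref{MF} over the torus), so $p:=\|\rho^\pm(t)\|_{\mathrm{TV}}=\|\rho^{\circ,\pm}\|_{\mathrm{TV}}$ is constant in time. Introducing the normalised empirical probability measure
\begin{equation*}
  \tilde\rho_n^\pm(t):=\frac{1}{n^\pm}\sum_{i\in I^\pm}\delta_{\oX_i(t)}\in\cP(\T^2),
\end{equation*}
so that $\orho_n^\pm(t)=(n^\pm/n)\,\tilde\rho_n^\pm(t)$ and $\rho^\pm(t)=p\,\mu_t^\pm$, the first step is the decomposition
\begin{equation*}
  \orho_n^\pm(t)-\rho^\pm(t)=\frac{n^\pm}{n}\bigl(\tilde\rho_n^\pm(t)-\mu_t^\pm\bigr)+\Bigl(\frac{n^\pm}{n}-p\Bigr)\mu_t^\pm,
\end{equation*}
which splits the discrepancy into a genuine sampling error and a mass-discrepancy error.

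The mass-discrepancy term is immediate: by the definition~\eqref{def:gamma-mass-disc} we have $|n^\pm/n-p|=\kappa$ (for the $-$ case one uses $n^++n^-=n$ together with $\int\rho^{\circ,+}+\int\rho^{\circ,-}=1$), while $\|\mu_t^\pm\|_{1,\infty}^*\leq\|\mu_t^\pm\|_{\mathrm{TV}}=1$ since any $\varphi$ with $\|\varphi\|_{1,\infty}\leq 1$ satisfies $\|\varphi\|_\infty\leq 1$. Hence this term contributes at most $\kappa$ to $\E\|\orho_n^\pm(t)-\rho^\pm(t)\|_{1,\infty}^*$.

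For the sampling term, I would first observe the elementary domination $\|\cdot\|_{1,\infty}^*\leq W_1$ between probability measures (the admissible set of test functions for the former norm is contained in that for the latter, since $\|\varphi\|_\infty+\|d\varphi\|_\infty\leq 1$ implies $\|d\varphi\|_\infty\leq 1$), and then invoke the quantitative Glivenko-Cantelli estimate of Fournier and Guillin for i.i.d.\ samples on the compact domain $\T^2$, which in dimension two gives
\begin{equation*}
  \E W_1\bigl(\tilde\rho_n^\pm(t),\mu_t^\pm\bigr)\leq C\,\frac{\log(1+n^\pm)}{\sqrt{n^\pm}},
\end{equation*}
where $C$ depends only on a moment of $\mu_t^\pm$ and is bounded uniformly in $t$ by compactness of $\T^2$. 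Multiplying by $n^\pm/n$ and using $\sqrt{n^\pm}\leq\sqrt{n}$ converts this into a contribution of order $\log n/\sqrt{n}$; the factor $\|\rho^\pm(t)\|_{\mathrm{TV}}\leq 1$ appearing in the statement is then a (harmless) upper bound that can be absorbed into~$C$. Summing the two contributions yields the claimed estimate.

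The main obstacle is the sharp $\log n/\sqrt{n}$ rate in the Glivenko-Cantelli estimate, which is dimension-specific and technically nontrivial; rather than reproduce that proof I would simply cite Fournier-Guillin. Everything else reduces to the triangle inequality, the independence structure of $\oSDE$ supplied by Lemma~\ref{l:oSDE:props}, and mass conservation for the PDE~\eqref{MF}.
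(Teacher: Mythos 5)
Your proof is essentially the same as the paper's: both split the error into a mass-discrepancy term bounded by $\kappa$ and a sampling term bounded via a quantitative Glivenko--Cantelli estimate in $W_1$, using that $\|\cdot\|_{1,\infty}^*\leq W_1$ on pairs of equal-mass non-negative measures, and both rely on the i.i.d.\ structure of $\{\oX_i(t)\}_{i\in I^\pm}$ supplied by Lemma~\ref{l:oSDE:props} together with mass conservation for $\MF$. The decomposition differs only cosmetically: the paper rescales the empirical measure (via $\alpha\orho_n^+ = \|\rho^+(t)\|_{\mathrm{TV}}\,\tilde\rho_n^+$) so that the intermediate measure has mass $\|\rho^+(t)\|_{\mathrm{TV}}$, while you rescale the reference measure so that the intermediate $(n^\pm/n)\mu_t^\pm$ has mass $n^\pm/n$. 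The reference is also slightly different (Fournier--Guillin rather than Theorem~1 of Fournier--Jourdain~\cite{FJ17}, which the paper invokes), but the rates are equivalent.

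One detail to tighten: your sampling term gives $\frac{n^\pm}{n}\cdot C\frac{\log n^\pm}{\sqrt{n^\pm}} \lesssim \frac{\log n}{\sqrt n}$, which is the weaker bound $\kappa + C\frac{\log n}{\sqrt n}$, \emph{without} the factor $\|\rho^\pm(t)\|_{\mathrm{TV}}\leq 1$ present in the lemma. Since that factor only makes the stated bound sharper, it cannot be ``absorbed into $C$'' from your weaker estimate — the implication runs the other way. This has no effect on the rest of the argument because Theorem~\ref{t:SDEtoMF} uses the bound with $\|\rho^\pm(t)\|_{\mathrm{TV}}$ replaced by~$1$ anyway, so your version suffices; but to prove the lemma exactly as stated, you should instead compare $\orho_n^\pm$ with $\|\rho^\pm(t)\|_{\mathrm{TV}}\,\tilde\rho_n^\pm$, as the paper does, so that the Glivenko--Cantelli contribution scales by $\|\rho^\pm(t)\|_{\mathrm{TV}}$ rather than by $n^\pm/n$.
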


\noindent
Applying this result and the estimate of the first term in \eqref{est:PoC} already established now  concludes the proof of Theorem~\ref{t:SDEtoMF}.

\subsection{Proofs of auxiliary results}
This section is devoted to the proofs of the three auxiliary results used above to prove Theorem~\ref{t:RWtoSDE}.

\subsubsection{Proof of Lemma~\ref{l:oSDE:props}}
Lemma~\ref{l:oSDE:props} establishes two properties of the process $\{\obX_t\}_{0\leq t\leq T}$. To establish assertion \eqref{l:oSDE:props:indep}, we observe that since $X_i^\circ$ are independent random variables and $B_i$ are independent processes for all $i = 1,\ldots,n$, the fact that $\oX_i$ solves \eqref{oSDE} directly entails that $\oX_i$ are also independent processes. 
  
To show assertion \eqref{l:oSDE:props:law}, note that via an application of the {Feynman--Kac} formula we find that the Fokker--Planck equation for the law $\omu_t^\pm$ of $\oX_i(t)$ for $i \in I^\pm$ is
  \begin{equation} \label{oFP}
  \left\{ \begin{aligned}
    \partial_t \omu^\pm 
    & = - \div \big( \omu^\pm F ( \cdot , \pm1; \rho) \big) + \beta^{-1} \Delta \omu^\pm
    && \text{on } \R^2 \times (0, T), \\
    \omu_0^\pm
    &= \mu^\pm(0).
  \end{aligned} \right.
  \end{equation}
It is clear that \eqref{oFP} has a unique classical solution $\omu^\pm$, since this is a linear uniformly parabolic equation with smooth coefficients; see for example \cite{LSU1968}. From \eqref{MF} we observe that $\mu^\pm$ solves \eqref{oFP}; hence $\omu = \mu$ as stated.

\subsubsection{Proof of Lemma~\ref{l:PoC}}
To prove Lemma~\ref{l:PoC}, consider a realization $\{\bX(t)\}_{t}$ and $\{\obX(t)\}_t$ of the two processes. For each $i$, the curves $t\mapsto X_i(t)$ and $t\mapsto \oX_i(t)$ are almost surely continuous, and we can temporarily consider them as $\R^2$-valued curves. Since $X_i(0)$ and $\oX_i(0)$ are the same point in $\T^2$, we can translate the curve $\oX_i$ by a vector in $\Z^2$ such that $X_i(0)=\oX_i(0)$ as elements of $\R^2$. Since the function $F(\cdot,b;\rho(t))$ is $\Z^2$-periodic, this entails no loss of generality.

By definition of the SDEs \eqref{SDE} and \eqref{oSDE} we have for all $i = 1,\ldots,n$,
\begin{align*}
X_i(t) - \oX_i(t)
&= X_i(0) - \oX_i(0) + \int_0^t \big[ F_i(\bX(s), \bb) - F(\oX_i(s), b_i; \rho(s)) \big] \, ds \\
&= b_i \int_0^t \Big[ \big(\nabla V_\delta * (\rho_s^+ - \rho_s^-) \big)(\oX_i(s)) - \frac1n \sum_{j=1}^n b_j \nabla V_\delta(X_i(s) - X_j(s)) \Big] \, ds.
\end{align*}
Now, since 
\begin{align}
  \big| \nabla V_\delta(\oX_i - \oX_j) - \nabla V_\delta(X_i - X_j) \big|
  &\leq \| d^2 V_\delta \|_\infty \big| (\oX_i - \oX_j) - (X_i - X_j) \big| \notag\\
  &\leq\frac{\mathsf C_V}{ \delta^2} \big( |\oX_i - X_i| + |\oX_j - X_j| \big),
\label{est:PoC:pf0}
\end{align}
by adding and subtracting the same term and applying the triangle inequality, we obtain
\begin{align} 
&\sum_{i=1}^n | X_i(t) - \oX_i(t) | \notag\\
&\quad\leq \sum_{i=1}^n \int_0^t \Big| \big(\nabla V_\delta * (\rho_s^+ - \rho_s^-) \big)(\oX_i(s)) 
  - \frac1n \sum_{j=1}^n b_j \nabla V_\delta(\oX_i(s) - \oX_j(s)) \Big| \, ds \notag\\
&\quad\qquad{}+\sum_{i=1}^n \int_0^t \Big| \frac1n \sum_{j=1}^n b_j \nabla V_\delta(\oX_i(s) - \oX_j(s))
   -\frac1n \sum_{j=1}^n b_j \nabla V_\delta(X_i(s) - X_j(s)) \Big|\, ds \notag\\
&\quad\leq \sum_{i=1}^n \int_0^t \Big| \big(\nabla V_\delta * (\rho_s^+ - \rho_s^-) \big)(\oX_i(s)) - \frac1n \sum_{j=1}^n b_j \nabla V_\delta(\oX_i(s) - \oX_j(s)) \Big| \, ds \notag\\
&\quad\qquad{}+ \frac{\mathsf C_V}{\delta^2 n} \sum_{i,j=1}^n \bigg( \int_0^t |\oX_i - X_i|(s) \,ds + \int_0^t |\oX_j - X_j|(s) \,ds \bigg).
  \label{est:PoC:pf1}
\end{align}
To simplify the integrand of the first integral in the final upper bound, we write
\begin{align}
\nabla V_\delta *(\rho_s^+-\rho_s^-) (\oX_i)
&= \frac{n^+}n \nabla V_\delta * \mu_s^+ (\oX_i) - \frac{n^-}n \nabla V_\delta*\mu_s^- (\oX_i)
\notag\\
&\qquad{} + 
  \nabla V_\delta * \Big( \rho_s^+ - \frac{n^+}n \mu_s^+\Big) (\oX_i) 
   - \nabla V_\delta * \Big( \rho_s^- - \frac{n^-}n \mu_s^-\Big) (\oX_i).
\end{align}\label{Vd:rho:to:mu}
To treat the first two terms on the right--hand side of \eqref{Vd:rho:to:mu}, we will compare them to
$\frac{n^\pm}{n}\nabla V_\delta(\oX_i-y)$, which are of a similar form to those terms which appear in the second term forming the first integrand on the right--hand side of the bound given in \eqref{est:PoC:pf1}. To make this comparison, we set
\begin{equation}
\label{def:g-poc}
  g : (\R^2)^2 \times \{\pm1\} \to \R^2, \quad g (x,y,b) := b \big[ \big(\nabla V_\delta * \mu_s(\cdot, b) \big)(x) - \nabla V_\delta(x - y)\big]
\end{equation}
and note that
\begin{equation} \label{gpm:props}
  \| g \|_\infty 
  \leq 2 \| \nabla V_\delta \|_\infty
  \leq \frac{2 \mathsf C_V}\delta
  \quad \text{and} \quad
  \forall \, x \in \R^2, \, b \in \{\pm1\} : \int_{\T^2} g (x, y, b) \, \mu_s(dy, b) = 0.
\end{equation}

To treat the latter terms on the right--hand side of \eqref{Vd:rho:to:mu}, we use the definition of $\mu^\pm_s$ given in \eqref{mu:rho} and estimate
\begin{equation}
\Big|\nabla V_\delta * \Big( \rho_s^\pm - \frac{n^\pm}n \mu_s^\pm\Big) (\oX_i) \Big|
\leq \Big| \|\rho^\pm_s\|_{\mathrm{TV} } -\frac{n^\pm}n \Big| \ |\nabla V_\delta * \mu_s^\pm|
\leq \kappa \|\nabla V_\delta\|_\infty \leq \frac{\mathsf C_V}{\delta} \kappa, 
\label{est:PoC1-gamma}
\end{equation}
where $\kappa$ is the mass discrepancy defined in~\eqref{def:gamma-mass-disc}.

Using these estimates, we find that the upper bound in \eqref{est:PoC:pf1} can be further estimated above as
\begin{align}
\sum_{i=1}^n | X_i - \oX_i |(t)
&\leq \frac{2n\mathsf C_V\kappa} \delta \, t + \frac1{n} \sum_{i=1}^n \int_0^t \Big| \sum_{j=1}^n g(\oX_i(s), \oX_j(s), b_j) \Big| \, ds \notag\\
&\qquad{}+ \frac{2\mathsf C_V}{\delta^2} \int_0^t \Big( \sum_{i=1}^n |\oX_i - X_i|_{\T^2}(s) \Big) \,ds.
  \label{est:PoC:pf2}
\end{align}

Next we prepare to take the expectation of \eqref{est:PoC:pf2}; we focus on the first integral. Fixing $s$ in the integrand and removing it from the notation since we may exchange taking expectations and integrating in time, we find
\begin{equation*}
  \E \Big| \sum_{j=1}^n g(\oX_i, \oX_j, b_j) \Big|^2
  = \sum_{j=1}^n \E \big[ g(\oX_i, \oX_j, b_j)^2 \big]
    + \sum_{j \neq k} \E \big[ g(\oX_i, \oX_j, b_j) g(\oX_i, \oX_k, b_k) \big].
\end{equation*}
Using the  bound in \eqref{gpm:props}, we estimate the diagonal part as
\begin{equation*}
  \sum_{j=1}^n \E \big[ g(\oX_i, \oX_j, b_j)^2 \big] 
  \leq \frac{4 n \mathsf C_V^2}{\delta^2}.
\end{equation*}
All the off-diagonal terms turn out to be zero. To see this, we first treat the case $j \neq i \neq k$. Then, since $\oX_i$, $ \oX_j$ and $ \oX_k$ are independent, we obtain from \eqref{gpm:props} that
\begin{multline*}
  \E \big[ g(\oX_i, \oX_j, b_j) g(\oX_i, \oX_k, b_k) \big]
  = \iiint_{(T^2)^3} g(x,y, b_j) g(x,z, b_k) \, \mu(dz, b_k) \mu(dy, b_j) \mu(dx, b_i) \\
  = \int_{T^2} \bigg[ \int_{T^2} g(x,y, b_j) \mu(dy, b_j) \bigg] \bigg[ \int_{T^2} g(x,z, b_k) \mu(dz, b_k) \bigg] \mu(dx, b_i)
  = 0.
\end{multline*}
Similarly, when $k = i$, we obtain
\begin{equation*}
  \E \big[ g(\oX_i, \oX_j, b_j) g(\oX_i, \oX_i, b_i) \big]
  = \int_{T^2} \bigg[ \int_{T^2} g(x,y, b_j) \mu(dy, b_j) \bigg] g(x,x, b_i) \mu(dx, b_i)
  = 0.
\end{equation*}
The case $j = i$ can be treated analogously.

In conclusion, by applying the Cauchy--Schwarz inequality, we obtain
\begin{equation*}
  \E \Big| \sum_{j=1}^n g(\oX_i, \oX_j, b_j) \Big|
  \leq \bigg( \E \Big| \sum_{j=1}^n g(\oX_i, \oX_j, b_j) \Big|^2 \bigg)^{\tfrac12}
  \leq \frac{2 \mathsf C_V}\delta \sqrt{ n }.
\end{equation*}

Finally, taking the expectation of \eqref{est:PoC:pf2}, we get
  \begin{equation*} 
\E \sum_{i=1}^n | X_i - \oX_i |(t)
\leq \frac{2 \mathsf C_V}{\delta} (\sqrt n + \kappa n)\, t  + \frac{2\mathsf C_V}{\delta^2} \int_0^t \E \sum_{i=1}^n | X_i - \oX_i |(s) \,ds.
\end{equation*}
By applying Gronwall's Lemma we find the assertion of Lemma \ref{l:PoC}.

\subsubsection{Proof of Lemma~\ref{l:GlivenkoCantelli}}
Finally, to establish Lemma~\ref{l:GlivenkoCantelli}, we apply the quantitative  Glivenko--Cantelli estimate  of Fournier and Jourdain~\cite{FJ17} in the Wasserstein metric $W_1$. For non-negative measures $\eta_1$ and $\eta_2$ on $\T^2$ with equal mass, we have the straightforward estimate
\begin{align*}
\|\eta_1-\eta_2\|_{1,\infty}^*&=
\sup_{\|\varphi\|_{1,\infty}\leq 1} \int_{\T^2} \varphi(\eta_1-\eta_2)\\
&\leq\sup_{\|\nabla \varphi\|_{\infty}\leq 1} \int_{\T^2} \varphi(\eta_1-\eta_2)
=:  W_1(\eta_1,\eta_2).
\end{align*}
We set $\alpha := n\|\rho^+(t)\|_{\mathrm{TV}}/n^+$, so that $\alpha \orho_n^+(t)$ and $\rho^+(t)$ have equal mass. We then estimate 
\begin{align*}
\E \| \orho_n^+(t) - \rho^+(t) \|_{1,\infty}^*
&\leq 
\E \| \orho_n^+(t) - \alpha\orho_n^+(t) \|_{1,\infty}^*
+\E \|  \alpha\orho_n^+(t)- \rho^+(t) \|_{1,\infty}^*\\
&\leq \E \| (1-\alpha) \orho_n^+(t) \|_{1,\infty}^*
+\E W_1( \alpha\orho_n^+(t), \rho^+(t)) .
\end{align*}
The first term is equal to 
\[
|1-\alpha|\,\|\orho_n^+(t)\|_{\mathrm{TV}} = |1-\alpha|\frac{n^+}n = \left| \frac{n^+}n - \|\rho^+(t)\|_{\mathrm {TV}}\right| = \kappa,
\]
where as before, $\kappa$ is the discrepancy defined in \eqref{def:gamma-mass-disc},
and for the second we apply Theorem~1 in \cite{FJ17} to find
\[
\E W_1( \alpha\orho_n^+(t), \rho^+(t))
\leq 
C \frac{\log n}{\sqrt n}\, \|\rho^+(t)\|_{\mathrm {TV}}
\]
for some universal constant $C > 0$. The assertion of the lemma follows for the case $b = 1$, and the argument for $b=-1$ is identical, so the proof of Lemma~\ref{l:GlivenkoCantelli} is concluded.

\section{Proof of Theorem~\ref{t:RWtoMFe}}
\label{sec:RWtoMFe}
This section concerns itself with the proof of Theorem~\ref{t:RWtoMFe}. For convenience, we restate the result here in full.

\thmRWMFe*

\noindent
The proof we give of this result follows the same philosophy as the proof of Theorem~\ref{t:SDEtoMF} in the previous section. The main difference is that the state space is the discrete torus $\Lambda_\varepsilon$ instead of the continuum torus $\T^2$, and correspondingly the stochastic process is a random walk instead of a diffusion. 

In the previous section the main ingredient of the proof is a propagation--of--chaos statement. This statement estimates the divergence of two processes; one is the original process, and the second is a vector of i.i.d.\ processes constructed to have the same distribution as the solution of the mean-field problem.  The crucial point is that the estimate is obtained by subtracting the two SDEs   for the same realization of the noise. This reduces the impact of the randomness significantly and allows an estimate of the divergence of the solutions by using independence and Gronwall's Lemma. In the current setup on the discrete lattice such simple `subtraction of equations' is not possible; instead, in order to achieve a similarly strong coupling between the two processes, we write the processes in the `random time-change'
formulation. This formulation was  developed by Volkonskii~\cite{Volkonskii58}, Helms~\cite{Helms74}, and Kurtz~\cite{Kurtz80}, and an overview can be found in~\cite[Ch.~6]{EthierKurtz}.

\subsection{Main argument}
We first fix some notation. As in the proof of Theorem \ref{t:SDEtoMF}, we assume  that $n^+, n^- \geq 1$, with the cases where $n^+=0$ or $n^-=0$ being proved completely analogously.

We fix a probability space $(\Omega, \Sigma, \Prob)$. Each jump that a process on $\Lambda_\e^n$ can make is characterized by a particle number $i \in \{1,\ldots, n\}$ and a direction $h \in \Nhd_\e$.
We collect these into a single direction object $\bh\in \Nhd_\e^n$ as in \eqref{eq:Nhdn}, i.e.,
\begin{equation*}
  \bh = \big(\underbrace{0,\dots,0}_{i-1\text{ times}},h,\underbrace{0,\dots,0}_{n-i\text{ times}}\big)^T.
\end{equation*}
Recall that Theorem~\ref{t:RWtoMFe} assumes a given choice of vector $\bb$ (fixing $I^\pm$ as defined in \eqref{Ipm}) and a given initial distribution $\rho_\e^\circ\in \mathcal P(\T^2_\pm)$. Let the components of the initial vector $\bX^\circ_\e$ be chosen independently, with the law of $X^\circ_{\e,i}$ proportional to $\rho^\circ_+$ for $i\in I^+$ and $\rho^\circ_-$ for $i\in I^-$.

We construct a solution $\bX_\e$ of the random walk~$\RW$ as follows. 
For each of the $4n$ possible values of $\bh\in \Nhd_\e^n$, let $N^\bh$ be an independent standard Poisson process. 
These Poisson processes $N^\bh$ are the counterpart of the Brownian Motion processes in \eqref{SDE}, and we  couple the two processes by using the same realizations of $N^\bh$ in both processes. The stochastic process $\{ \bX_\e(t) \}_{0 \leq t \leq T}$ then is defined by the set of equations
\begin{subequations}
\label{RW:inte}
\begin{align} 
\tau^\bh(t) &= \int_0^t \cR^\e_{n,\bh}(\bX_\e(s), \bb)\, ds &\qquad &\bh\in \Nhd_\e^n, \ t\geq 0\\
\bX_\e(t) &= \bX_\e^\circ + \sum_{\bh\in\Nhd_\e^n} \bh N^\bh(\tau^\bh(t)) &&t\geq 0,
\end{align}
\end{subequations}
where we recall that the rates $\cR^\e_{n,\bh}$ are given in~\eqref{RW:Rates}.
In Lemma~\ref{lem:props-coupled-discrete} below we show that for each realization of $N^\bh$ these equations admit a solution $((\tau^\bh)_\bh, \bX_\e)$, and that $\bX_\e$ is a solution to~$\RW$. 

Next we construct the random walk counterpart of $\oSDE$ defined in \eqref{oSDE}. 
Let $\rho_\e(t)$  be as asserted in Theorem~\ref{t:RWtoMFe} (existence and uniqueness are proven in Section \ref{sec:MFetoMF}). 
For the same $\bX_\e^\circ$ and $N^\bh$ as given above, we construct the auxiliary process $\{ \obX_\e(t) \}_{0 \leq t \leq T}$ defined by
\begin{subequations}
\label{oRW:inte}
\begin{align} 
\otau^\bh(t) &= \int_0^t \cR_h^\e(\oX_{\e,i}(s), b_i; \rho_\e(s)) \, ds&\qquad &\bh\in \Nhd_\e^n, \ t\geq 0\\
\obX_\e(t) &= \bX_\e^\circ + \sum_{\bh\in\Nhd_\e^n} \bh N^\bh(\otau^\bh(t))&&t\geq 0,
\end{align}
\end{subequations}
where in this case, we recall that the rates $\cR_h^\e$ are given in~\eqref{RW-MF:Rates}.

As in the proof of Theorem~\ref{t:SDEtoMF} given in the previous section, we define normalized versions of the mean-field solution components $\rho_\e^+(t)$ and $\rho_\e^-(t)$:
\[
\mu_\e^\pm (t) := \frac{\rho_\e^\pm(t)}{\|\rho_\e^\pm(t)\|_{\mathrm {TV}}}.
\]
The following lemma now provides some preliminary properties of the processes we consider here.

\begin{lem}[Properties of $\{ \bX_\e(t) \}_{0 \leq t \leq T}$ and $\{ \obX_\e(t) \}_{0 \leq t \leq T}$] \label{lem:props-coupled-discrete} 
\begin{enumerate}
\item (Existence) \label{i:lem:props-coupled-discrete:existence}
For $\Prob$-\\
almost-every realization $(N^\bh)_{\bh\in\Nhd_\e^n}$ there exist unique functions $t\mapsto \bX_\e(t), (\tau^\bh(t))_\bh$ satisfying \eqref{RW:inte} and unique  functions $t\mapsto \obX_\e(t), (\otau^\bh(t))_\bh$ satisfying \eqref{oRW:inte}.
\item (Solutions) \label{l:props-coupled-discrete:RW} For each $t\geq0$ and each $\bh\in \Nhd_\e^n$, $\bX_\e(t)$, $\obX_\e(t)$, $\tau^\bh(t)$, and $\otau^\bh(t)$ are $\Sigma$-measurable; $\bX_\e$ is a solution of the interacting jump problem~$\RW$, and $\obX_\e$ is a list $\{ \oX_{\e,i} \}_{i=1}^n$ of independent processes, whose law for any $t \in [0,T]$ and any $i \in I^\pm$ is given by $\mu_\e^\pm(t)$.
\item (Expectation identity) \label{l:props-coupled-discrete:Ex} For each $\bh\in \Nhd_\e^n$, the functions $N^\bh\circ \tau^\bh$ and $N^\bh\circ \otau^\bh$ are $\Sigma$-measurable, and we have for each $t\geq0$
\begin{equation}
\label{eq:Y-tau}
\E|N^\bh(\tau^\bh(t))-N^\bh(\otau^\bh(t))| = \E|\tau^\bh(t)-\otau^\bh(t)|.
\end{equation}
\end{enumerate}
\end{lem}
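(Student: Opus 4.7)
\textbf{Part~(\ref{i:lem:props-coupled-discrete:existence}): Existence.} The plan is the standard explicit construction of the trajectories by induction on successive jump times. Fix a realisation of the independent Poisson processes $(N^\bh)_{\bh\in\Nhd_\e^n}$, whose jump times almost surely never coincide. Set $t_0=0$, $\bX_\e(0)=\bX_\e^\circ$ and $\tau^\bh(0)=0$. Given the state at the $k$-th jump time $t_k$, the rate $r_\bh^{(k)}:=\cR^\e_{n,\bh}(\bX_\e(t_k),\bb)$ is constant on $[t_k,t_{k+1})$, so $\tau^\bh(t)=\tau^\bh(t_k)+r_\bh^{(k)}(t-t_k)$ there. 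Letting $T_\bh^{(k+1)}$ denote the first jump time of $N^\bh$ strictly after $\tau^\bh(t_k)$, define $t_{k+1}$ as the smallest $t>t_k$ for which $\tau^\bh(t)=T_\bh^{(k+1)}$ for some $\bh$, and update $\bX_\e$ by the corresponding increment. Under Assumptions~\ref{ass:Vd}--\ref{ass:beta-e-delta-bound} the rates are uniformly bounded by some $R_{\max}<\infty$, so the waiting times $t_{k+1}-t_k$ dominate the minimum of $|\Nhd_\e^n|$ independent exponentials of parameter $R_{\max}$, ruling out explosion. The construction for $(\obX_\e,(\otau^\bh))$ is analogous and, crucially, decouples across the index $i$, since the rate $\cR_h^\e(\oX_{\e,i},b_i;\rho_\e(s))$ depends only on $\oX_{\e,i}$ and on the deterministic field $\rho_\e$.

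\textbf{Part~(\ref{l:props-coupled-discrete:RW}): Solutions.} Measurability of $\bX_\e(t)$, $\obX_\e(t)$, $\tau^\bh(t)$ and $\otau^\bh(t)$ is immediate from the iterative construction at each stage. That $\bX_\e$ solves $\RW$ is then a direct application of the random time-change representation of continuous-time Markov jump processes (see \cite[Ch.~6]{EthierKurtz}): by construction the jump of type $\bh$ fires with stochastic intensity $\cR^\e_{n,\bh}(\bX_\e(t),\bb)$, matching the generator encoded in \eqref{RW:Rates}. Independence of the components of $\obX_\e$ follows because each $\oX_{\e,i}$ is driven solely by the Poisson processes with particle index $i$, and these subsets of $(N^\bh)_\bh$ are mutually independent. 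To identify the marginal law $\lambda_i(t)$ of $\oX_{\e,i}$ for $i\in I^\pm$, I would write down its forward Kolmogorov equation,
\begin{equation*}
  \partial_t \lambda_i = \e\sum_{h\in\Nhd_\e}D_{-h}\bigl(\cR_h^\e(\,\cdot\,,b_i;\rho_\e(t))\,\lambda_i\bigr),
\end{equation*}
and compare it with $\MFe$; mass conservation of $\MFe$ (obtained by summing \eqref{RW:FP-MF} over $\ell\in\L$) gives $\|\rho_\e^\pm(t)\|_{TV}=\|\rho_\e^{\circ,\pm}\|_{TV}$, so the normalised $\mu_\e^\pm$ also solves this linear equation with the same initial condition as $\lambda_i$ by the chosen law of $X_{\e,i}^\circ$. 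Uniqueness for this finite linear ODE system on $\L$ then yields $\lambda_i=\mu_\e^{b_i}$.

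\textbf{Part~(\ref{l:props-coupled-discrete:Ex}): Expectation identity.} I expect this to be the main obstacle and the place where the coupling via the shared noise really bites. The plan is to exploit the monotonicity of $N^\bh$: setting $M_\bh(t):=\tau^\bh(t)\vee\otau^\bh(t)$ and $m_\bh(t):=\tau^\bh(t)\wedge\otau^\bh(t)$, one has pathwise
\begin{equation*}
  |N^\bh(\tau^\bh(t))-N^\bh(\otau^\bh(t))|=N^\bh(M_\bh(t))-N^\bh(m_\bh(t)).
\end{equation*}
Both $M_\bh$ and $m_\bh$ are continuous, non-decreasing, and absolutely continuous in $t$ with rates bounded by $R_{\max}$; they are adapted to the natural filtration $\filtF_t$ generated jointly by $\bX_\e^\circ$ and the Poisson processes $(N^\bh)_\bh$. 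Applying the standard martingale property of compensated Poisson processes under absolutely continuous time changes (\cite[Ch.~6]{EthierKurtz}) to both $M_\bh$ and $m_\bh$, $N^\bh(M_\bh(t))-M_\bh(t)$ and $N^\bh(m_\bh(t))-m_\bh(t)$ are $\filtF_t$-martingales; taking expectations and subtracting gives $\E[N^\bh(M_\bh(t))-N^\bh(m_\bh(t))]=\E[M_\bh(t)-m_\bh(t)]=\E|\tau^\bh(t)-\otau^\bh(t)|$, which is \eqref{eq:Y-tau}. The delicate point requiring the most care is the verification that the rates driving $M_\bh$ and $m_\bh$ are predictable with respect to the filtration generated by the underlying Poisson noise, which I expect to handle by expressing their densities as adapted indicator combinations of the two original rates on the complementary events $\{\tau^\bh>\otau^\bh\}$ and $\{\tau^\bh<\otau^\bh\}$.
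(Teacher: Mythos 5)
Your proposal is substantively aligned with the paper's proof. Parts 1 and 2 are essentially identical in substance: for part 1 you give an explicit inductive construction by successive jump times, whereas the paper cites Helms (Section 4 and Theorem 10), casting~\eqref{RW:inte} and~\eqref{oRW:inte} into Helms's random time-change framework by extending the driving Markov processes with the constant process $\bX^\circ_\e$ and the deterministic clock $t\mapsto t$. Both routes give existence and uniqueness, but the paper's citation also delivers, for free, the fact that each $\tau^\bh(t)$ and $\otau^\bh(t)$ is a stopping time with respect to the multiparameter filtration $\filtF_\bu$ — which is exactly the technical ingredient needed in part 3. Part 2 matches the paper almost verbatim: generator identification via the time-change representation, independence from disjoint driving noise, and identification of the marginal law by uniqueness of the linear Kolmogorov forward equation.

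For part 3, both proofs exploit the monotonicity of $N^\bh$ and the martingale structure of the compensated Poisson process, but where you invoke a calendar-time time-change martingale result, the paper applies Doob's Optional Stopping Theorem directly to the internal-time martingale $M^\bh_u = N^\bh(u) - u$ at the stopping times $\tau^\bh(t)\vee\otau^\bh(t)$ and $\tau^\bh(t)\wedge\otau^\bh(t)$. The delicate point you flag — predictability of the rate driving $M_\bh$ and $m_\bh$ in a filtration where $N^\bh$ keeps its martingale property — is precisely where your sketch leaves a gap that the paper fills: it constructs the enlarged filtration $\filtM^\bh_u := \sigma\bigl(\filtF_u^\bh \cup \bigcup_{\bh'\neq\bh}\filtF_\infty^{\bh'}\bigr)$ and notes that $M^\bh$ remains an $\filtM^\bh$-martingale because the $(N^{\bh'})_{\bh'}$ are independent (so adjoining the independent sigma-algebras $\filtF^{\bh'}_\infty$ does not alter the compensator), while simultaneously $\tau^\bh(t)$, $\otau^\bh(t)$, and hence their max and min are genuine $\filtM^\bh$-stopping times by Helms. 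Your adapted-indicator route can plausibly be made to work, but the filtration-enlargement argument is the cleaner way to close the gap, since it converts a predictability verification into a simple independence observation and then reduces the whole problem to a textbook Doob OST.
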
 

\noindent
The proof of this result is postponed to the following section. Now we define the empirical measure
on $\Lpm$ to be
\begin{equation*}
  \orho_{\e,n}:=\frac{1}{n}\sum_{i=1}^n\delta_{(\overline{X}_{\e,i},b_i)},
\end{equation*}
and as in \eqref{est:PoC}, we may bound the quantity estimated in Theorem~\ref{t:RWtoMFe} via triangle inequality, writing
\begin{equation}
  \E \| \rho_{\e,n}^\pm(t) - \rho_\e^\pm(t) \|_{1,\infty}^* 
 \leq \E \| \rho_{\e,n}^\pm(t) - \orho_{\e,n}^\pm(t) \|_{1,\infty}^* 
 + \E \| \orho_{\e,n}^\pm(t) - \rho_\e^\pm(t) \|_{1,\infty}^*.\label{est:PoCe}
\end{equation}
Each of the terms on the right hand side can now be bounded in analogue with the arguments made in Section~\ref{s:SDEn:FP-Mark}. In the case of the first term, an upper bound can be obtained as a direct corollary of the following propagation of chaos result which forms the equivalent of Lemma~\ref{l:PoC} for the random walk; the proof of this result is postponed to the next section.

\begin{lem}[Propagation of chaos on the lattice]
\label{l:PoCe}
There exists universal constants $C$ and $C'$, such that for all $t \in [0,T]$,
\begin{equation*}
  \frac1n \E \sum_{i=1}^n \big| X_{\e,i} -\oX_{\e,i} \big|(t)
  \leq  \frac{C' t}{\delta } \Big( \frac1{\sqrt n} + \kappa \Big) \mathrm{e}^{ C \delta^{-2} t }
\end{equation*} 
where $\kappa$ is the mass discrepancy given in~\eqref{def:gamma-mass-disc-discrete}.
\end{lem}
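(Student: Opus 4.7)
The plan is to exploit the strong coupling built into Lemma~\ref{lem:props-coupled-discrete}: the two processes $\bX_\e$ and $\obX_\e$ are driven by the same Poisson processes $N^\bh$ and share the same initial condition, so subtracting their random time-change representations \eqref{RW:inte}--\eqref{oRW:inte} yields, for each coordinate $i$,
\[
X_{\e,i}(t)-\oX_{\e,i}(t)=\sum_{h\in\Nhd_\e} h\,\Big[N^{\bh_i^h}\!\bigl(\tau^{\bh_i^h}(t)\bigr)-N^{\bh_i^h}\!\bigl(\otau^{\bh_i^h}(t)\bigr)\Big],
\]
where $\bh_i^h$ denotes the element of $\Nhd_\e^n$ carrying $h$ in slot $i$. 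Taking expectations and applying the identity \eqref{eq:Y-tau} in Lemma~\ref{lem:props-coupled-discrete}\eqref{l:props-coupled-discrete:Ex} converts the oscillatory difference of Poisson processes into a tractable difference of time-changes, so that $\E|X_{\e,i}(t)-\oX_{\e,i}(t)|\leq \e\sum_{h}\E|\tau^{\bh_i^h}(t)-\otau^{\bh_i^h}(t)|$.

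Next I would reduce differences of time-changes to differences of forces. Because Assumption~\ref{ass:beta-e-delta-bound} gives $|\tfrac12\beta\,\bh\cdot\bF|\leq C$ and similarly for the mean-field drift, the map $z\mapsto \frac1{\beta\e^2}\exp(\tfrac12\beta z)$ is Lipschitz with constant $O(\e^{-2})$ on the relevant range. Combined with $|\bh|=|h|=\e$, this yields
\[
\big|\cR^\e_{n,\bh_i^h}(\bX_\e,\bb)-\cR^\e_h(\oX_{\e,i},b_i;\rho_\e)\big|\leq \frac{C}{\e}\,\big|F_i(\bX_\e,\bb)-F(\oX_{\e,i},b_i;\rho_\e)\big|,
\]
so that summing over $h$ and $i$ the prefactor $\e$ cancels and I obtain
\[
\sum_{i=1}^n\E\big|X_{\e,i}(t)-\oX_{\e,i}(t)\big|\leq C\int_0^t\sum_{i=1}^n\E\big|F_i(\bX_\e(s),\bb)-F(\oX_{\e,i}(s),b_i;\rho_\e(s))\big|\,ds.
\]

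At this point the analysis runs exactly parallel to the proof of Lemma~\ref{l:PoC}. I would add and subtract $F_i(\obX_\e,\bb)$, splitting the integrand into a Lipschitz piece and a chaos piece. Using $\|d^2 V_\delta\|_\infty\leq \mathsf C_V/\delta^2$, the Lipschitz piece satisfies
\[
\sum_{i=1}^n\bigl|F_i(\bX_\e,\bb)-F_i(\obX_\e,\bb)\bigr|\leq \frac{2\mathsf C_V}{\delta^2}\sum_{i=1}^n |X_{\e,i}-\oX_{\e,i}|,
\]
which feeds the Gronwall loop. For the chaos piece $F_i(\obX_\e,\bb)-F(\oX_{\e,i},b_i;\rho_\e)$, I would split $\rho_\e^\pm=\tfrac{n^\pm}{n}\mu_\e^\pm+\bigl(\rho_\e^\pm-\tfrac{n^\pm}{n}\mu_\e^\pm\bigr)$ as in \eqref{Vd:rho:to:mu}. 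The mass-discrepancy term contributes the $\kappa$ part via \eqref{est:PoC1-gamma}, while the difference with $\mu_\e^\pm$ leads to the centred functional $g$ from \eqref{def:g-poc}; by Lemma~\ref{lem:props-coupled-discrete}\eqref{l:props-coupled-discrete:RW}, the $\oX_{\e,j}$ are independent and $\oX_{\e,j}\sim\mu_\e^{b_j}(s)$, so the variance computation used for \eqref{est:PoC:pf2} in the continuous case applies verbatim and yields an $O(\sqrt n/\delta)$ bound for each~$i$.

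Combining these, with $Z(t):=\sum_i\E|X_{\e,i}(t)-\oX_{\e,i}(t)|$, I obtain
\[
Z(t)\leq \frac{C't}{\delta}\bigl(\sqrt n+n\kappa\bigr)+\frac{C}{\delta^2}\int_0^t Z(s)\,ds,
\]
and a direct application of Gronwall's Lemma followed by division by $n$ yields the stated bound. The main obstacle I anticipate is justifying the exponential-Lipschitz step cleanly: one must verify that the argument of $\exp$ really stays uniformly bounded throughout the evolution, which is where Assumption~\ref{ass:beta-e-delta-bound} enters essentially, and must take care that the $\e$ in $|h|$ exactly cancels the $\e^{-1}$ produced by the Lipschitz bound so that no negative power of $\e$ survives into the final estimate.
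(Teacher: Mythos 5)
Your proposal is correct and takes essentially the same approach as the paper: the same reduction via the random time-change coupling and the expectation identity \eqref{eq:Y-tau}, the same use of Assumption~\ref{ass:beta-e-delta-bound} to control the exponential factor in the rates, the same decomposition into a Lipschitz piece and a chaos piece (you perform it at the level of forces after linearising the exponential, the paper at the level of rates, but the resulting estimates coincide), and the same Gronwall conclusion.
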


\noindent
The second term in \eqref{est:PoCe} can be bounded using the estimate established in Lemma~\ref{l:GlivenkoCantelli}, translated to the random walk setting; since the remainder of the proof of Theorem~\ref{t:RWtoMFe} is therefore completely analogous to the arguments given to prove Theorem~\ref{t:SDEtoMF} in Section~\ref{s:SDEn:FP-Mark}, we omit the details.

\subsection{Proofs of auxiliary results}
This section provides detailed proofs of the two important auxiliary results used above.

\subsubsection{Proof of Lemma~\ref{lem:props-coupled-discrete}}
Lemma~\ref{lem:props-coupled-discrete} establishes existence and two other important properties of the processes considered in the proof of Theorem~\ref{t:RWtoMFe}. 
We refer to  Helms~\cite{Helms74} and Ethier and Kurtz~\cite[Ch.~6]{EthierKurtz} for further background on the concepts that we use in this proof. 

For each $\bh \in \Nhd_\e^n$, let $\filtF^\bh$ be a filtration for the Poisson process $N^\bh$ such that $\{N^\bh\}_\bh$ are independent. For vectors $\bu=(u^\bh)_{\bh\in \Nhd_\e^n} \in [0,\infty]^{4n}$ and $\bv = (v^\bh)_{\bh\in\Nhd_\e^n} \in [0,\infty]^{4n}$  we define the inequality $\bu \leq \bv$ coordinate-wise, i.e. $\bu \leq \bv \Longleftrightarrow \bigl[ u^\bh \leq v^\bh \text{ for all }\bh\bigr]$. 
For given $\bu$, we define the multiparameter filtration 
\[
\filtF_\bu := \sigma\Bigl( \bigcup_{ \bh \in \Nhd_\e^n } \filtF_{u^\bh}^\bh \Bigr).
\]
This filtration satisfies $\filtF_\bu \subset \filtF_\bv \subset \Sigma$ if $\bu\leq \bv$. 
An $\filtF_\bu$ stopping time  $\bT=(T^\bh)_{\bh\in \Nhd_\e^n}$ is defined to be a $[0,\infty]^{4n}$-valued random variable such that for each $\bu\in [0,\infty]^{4n}$ the set $\{\bT \leq \bu \}$ is an element of $\filtF_\bu$. 

The existence and uniqueness of solutions of~\eqref{RW:inte} for almost all realizations $(N^\bh)_\bh$, assertion~\ref{i:lem:props-coupled-discrete:existence} of the Lemma, is shown by Helms in~\cite[Sec.~4]{Helms74}, where in order to fit \eqref{RW:inte} and \eqref{oRW:inte} to \cite[(4.2)]{Helms74}, one needs to extend the collection $\{N^\bh\}_\bh$ of Markov processes by the constant-in-time  process $\bX^\circ_\varepsilon$ and the deterministic Markov process $t \mapsto t$. He also shows that $\bX_\e(t)$ and $\obX_\e(t)$ are $\Sigma$-measurable, and that for each $t\geq0$ the speed functions $\btau(t)$ and $\obtau(t)$ are $\filtF_\bu$ stopping times (and therefore $\Sigma$ measurable). Helms also shows that $\{N^\bh \circ \tau^\bh\}_\bh$ is a Markov process, and notes at the start of Section 5 that it is even a Feller process due to its finite state space. This property will allow us to apply~\cite[Thm.~10]{Helms74} below.

By~\cite[Thm.~10]{Helms74}, the generator of the process $\bX_\e$ is given by 
\[
\Omega_\e f := \e\sum_{\bh\in\Nhd_\e^n} \cR^\e_{n,\bh}(\cdot, \bb) D_{\bh} f
\]
which is consistent with the adjoint $\Omega_\e^*$ defined in~\eqref{FPe}. Since the generator is a bounded linear operator on the finite-dimensional state space $\L^n$, the generator uniquely characterizes the process, which in turn proves assertion \ref{l:props-coupled-discrete:RW} of Lemma~\ref{lem:props-coupled-discrete} concerning $\bX_\e$. 

Similarly, the autonomous process $Z(t) := (\obX_\e(t),t)$ has, setting \\
$\cR_i (\bell, s) := \cR_h^\e (\ell_i, b_i; \rho_\e(s))$, time--change representation 
\begin{align*}
\otau^\bh(t) &= \int_0^t \cR_i(Z(s))\, ds&\qquad &\bh\in \Nhd_\e^n, \ t\geq 0,\\
Z(t) &= Z(0) + \sum_{\bh\in\Nhd_\e^n} (\bh,0) N^\bh(\otau^\bh(t)) + (0,t)&&t\geq 0.
\end{align*}
Again applying \cite[Thm.~10]{Helms74}, we obtain that its generator is given by
\begin{equation*} 
\overline \Omega_\e f(\bell,t) := \e\sum_{\bh\in\Nhd_\e^n}\cR_h^\e(\ell_i, b_i; \rho_\e(t)) D_{\bh} f(\bell,t) 
 + \partial_t f(\bell,t).
\end{equation*}
Note that $\overline \Omega_\e$ is the generator of $i = 1,\ldots,n$ independent random walks on $\L$, each with time--dependent rate $\cR_h^\e(\ell, b_i; \rho_\e(t))$ to jump to the neighbouring lattice site $\ell + h$. Since the rate only depends on $i$ through $b_i$, the rate is the same for all $i \in I^+$ or for all $i \in I^-$. Hence, given $i \in I^\pm$, the law $\frac n{n^\pm} \orho^\pm$ of $\oX_{\e,i}(t)$ satisfies the Fokker--Planck equation
\begin{equation} \label{oMFe} 
\left\{ \begin{aligned}
\ds
\partial_t \orho^\pm_\e 
  &= \e \sum_{h\in \Nhd_\e} D_{-h}\Big(\mathcal{R}^\e_h (\,\cdot\,,\pm 1;\rho_\e)\orho_\e^\pm\Big)
  &&\quad \text{on }  \L\times (0, T),  
  \\
  \orho_\e(0) 
  &= \rho_\e^\circ
  &&\quad \text{on }  \L. 
\end{aligned} \right.
\end{equation}
Since \eqref{oMFe} is a system of linear ODEs with bounded, regular right-hand side, it has a unique solution $\orho_\e$. It is then clear from $\MFe$ that this solution is given by $\rho_\e$, i.e., $\orho_\e = \rho_\e$, and this completes the proof of assertion \ref{l:props-coupled-discrete:RW} of Lemma \ref{lem:props-coupled-discrete}. 

We finally prove the identity~\eqref{eq:Y-tau} by applying Doob's Optimal Stopping Theorem to the martingale $M_t^\bh := N^\bh(t) - t$ and the stopping times $\tau^\bh$ and $\otau^\bh$. To do so, we must show that $M^\bh$, $\tau^\bh$ and $\otau^\bh$ can be adapted to the same filtration. By construction, $M^\bh$ is an $\filtF^\bh$-martingale and $\tau^\bh$ and $\otau^\bh$ are stopping times adapted to
\begin{equation*}
  \filtM^\bh_t
  :=  \sigma\Bigl( \filtF_t^\bh \cup \bigcup_{ \bh' \neq \bh } \filtF_\infty^{\bh'} \Bigr)
  \supset \filtF^\bh_t.
\end{equation*}
Since the events $\{\filtF_\infty^{\bh}\}_\bh$ are independent, $M^\bh$ is also an $\filtM^\bh$-martingale. Then, applying Doob's Optimal Stopping Theorem to $M^\bh$ and the stopping time $\tau^\bh (t) \vee \otau^\bh (t)$, we obtain for any $t \in [0,T]$ and any $\bh \in \Nhd_\e^n$ that
\begin{align*}
0  
= \E[M_0^\bh]
= \E \big[M_{\tau^\bh (t) \vee \otau^\bh (t)}^\bh\big]
= \E \bigl[N^\bh(\tau^\bh(t)\vee\otau^\bh(t))\bigr] - \E \bigl[\tau^\bh(t)\vee\otau^\bh(t)\bigr].
\end{align*}
Analogously, a similar expression for $\tau^\bh\wedge \otau^\bh$ follows. It follows that
\begin{align*}
\E\big|N^\bh(\tau^\bh(t))-N^\bh(\otau^\bh(t))\big|
&= \E\Big[ N^\bh(\tau^\bh(t)\vee\otau^\bh(t)) - N^\bh(\tau^\bh(t)\wedge\otau^\bh(t))\Big]\\
&= \E \Big[\tau^\bh(t)\vee\otau^\bh(t) - \tau^\bh(t)\wedge\otau^\bh(t)\Big]\\
&= \E\big|\tau^\bh(t)-\otau^\bh(t)\big|,
\end{align*}
which proves assertion~\ref{l:props-coupled-discrete:Ex} of Lemma~\ref{lem:props-coupled-discrete}, and therefore concludes the proof.

\subsubsection{Proof of Lemma~\ref{l:PoCe}}
Lemma~\ref{l:PoCe} provides the analogue of Lemma~\ref{l:PoC} in the random walk setting. 
Using the formulae provided by \eqref{RW:inte}, \eqref{oRW:inte} and \eqref{eq:Y-tau}, we estimate
\begin{align}
&\E \sum_{i=1}^n \big| X_{\e,i} -\oX_{\e,i} \big|(t) 
\notag\\
&\quad= \E \sum_{i=1}^n \Big| \sum_{h\in \Nhd_\e} h \big( N^\bh(\tau^\bh(t))-N^\bh(\otau^\bh(t)) \big) \Big|
\notag\\
&\quad\leq \sum_{\bh\in \Nhd_\e^n} \e \E\bigl|N^\bh(\tau^\bh(t))-N^\bh(\otau^\bh(t))\bigr|
\notag\\
&\quad= \sum_{\bh\in \Nhd_\e^n} \e \E \bigl|\tau^\bh(t)-\otau^\bh(t)\bigr| \notag\\
&\quad\leq \int_0^t \E \Big[ \sum_{\bh\in\Nhd_\e^n} \e \left| \cR^\e_{n,\bh}(\bX_\e(s), \bb) - \cR^\e_{n,\bh}(\obX_\e(s), \bb) \right| \Big] \,ds  \notag\\
&\quad\qquad + \int_0^t \E \Big[ \sum_{\bh\in\Nhd_\e^n} \e \left| \cR^\e_{n,\bh}(\obX_\e(s), \bb) - \cR_h^\e(\oX_{\e,i}(s), b_i; \rho_\e(s)) \right| \Big] \,ds,
\label{ineq:MF-discrete-EZ}
\end{align}
where we have used the characterisation of $\bh$ in terms of $i$ and $h$ to write $\sum_{i=1}^n \sum_{h\in \Nhd_\e}$ as $\sum_{\bh\in \Nhd_\e^n}$.
We now treat the two integrals on the right--hand side of the above upper bound separately. For the first integral, we consider the integrand only for a fixed time, thereby allowing us to omit $s$ from our notation. Using the elementary inequality $|\mathrm{e}^a - \mathrm{e}^b| \leq \mathrm{e}^{a \vee b} |a-b|$, the estimate
\begin{align*}
\big| \bh \cdot \bF (\bell, \bb) \big| 
&\leq \frac1n \sum_{j=1}^n \left| h \cdot \nabla V_\delta (\ell_i - \ell_j) \right|
\leq \e \|\nabla V_\delta\|_\infty 
\leq \frac{\e \mathsf C_V }\delta
\quad \text{for all } \bh \in \Nhd_\e^n, \, \bell \in \L^n,
\end{align*}
and \eqref{est:PoC:pf0}, we first estimate
\begin{align} \notag
&\e \left| \cR^\e_{n,\bh}(\bX_\e, \bb)-\cR^\e_{n,\bh}(\obX_\e, \bb) \right|
= \frac1{\e\beta} \Big| \exp\Bigl(\frac\beta2 \bh \cdot \bF (\bX_\e, \bb) \Bigr) - \exp\Bigl(\frac\beta2 \bh \cdot \bF (\obX_\e, \bb) \Bigr) \Big| \\\notag
&\leq \frac1{\e\beta} \exp\Bigl(\frac\beta2 \max_{\bell \in \L^n} \big| \bh \cdot \bF (\bell, \bb) \big| \Bigr) 
\bigg| \frac\beta2 \frac hn \cdot \sum_{j=1}^n b_j \big( \nabla V_\delta (X_{\e,i} - X_{\e,j} ) - \nabla V_\delta (\oX_{\e,i} - \oX_{\e,j} ) \big) \bigg|\\\label{est:oRW:11}
&\leq \frac{\mathsf C_V}{2\delta^2} \mathrm{e}^{ \mathsf C_V \e\beta /2\delta } \Big( |X_{\e,i} - \oX_{\e,i}| + \frac1n \sum_{j=1}^n|X_{\e,j} - \oX_{\e,j} | \Big).
\end{align}
By assumption~\ref{ass:beta-e-delta-bound}, $\e\beta/\delta$ is bounded from above by a constant. Hence, \eqref{est:oRW:11} may be written
\begin{align*} 
\e \left| \cR^\e_{n,\bh}(\bX_\e, \bb)-\cR^\e_{n,\bh}(\obX_\e, \bb) \right|
\lesssim \frac{1}{\delta^2} \Big( |X_{\e,i} - \oX_{\e,i}| + \frac1n \sum_{j=1}^n|X_{\e,j} - \oX_{\e,j} | \Big).
\end{align*}
This estimate entails that the first integral on the right-hand side in \eqref{ineq:MF-discrete-EZ} can be bounded above by
\begin{align}
  &\int_0^t \E \Big( \sum_{\bh\in\Nhd_\e^n} \e \left| \cR^\e_{n,\bh}(\bX_\e(s), \bb) - \cR^\e_{n,\bh}(\obX_\e(s), \bb) \right| \Big) \,ds
  \lesssim \frac{1}{\delta^2} \int_0^t \E \sum_{i=1}^n \big| X_{\e,i} -\oX_{\e,i} \big|(s) \, ds. 
\end{align} \label{est:oRW:2}

For the second integral in \eqref{ineq:MF-discrete-EZ}, we follow a similar procedure to that in Section~\ref{s:SDEn:FP-Mark}. Since
\begin{equation*}
\big| h \cdot F (\ell, b; \rho_\varepsilon) \big| 
\leq \int_{\T^2} \left| h \cdot \nabla V_\delta (\ell - y) \right| \, (\rho_\varepsilon^+ + \rho_\varepsilon^-)(dy)
\leq \e \|\nabla V_\delta\|_\infty 
\leq \frac{\mathsf C_V \e}\delta
\end{equation*}
for all $h \in \Nhd_\e$, $\ell \in \L$, arguing as in \eqref{est:oRW:11}, we obtain that
\begin{align*}
&\e \left| \cR^\e_{n,\bh}(\obX_\e, \bb) - \cR_h^\e(\oX_{\e,i}, b_i; \rho_\e) \right|\\
&\quad= \frac1{\e\beta} \Big| \exp\Bigl(\frac\beta2 \bh \cdot \bF (\obX_\e, \bb) \Bigr) - \exp\Bigl(\frac\beta2 h \cdot F (\oX_{\e,i}, b_i; \rho_\varepsilon) \Bigr) \Big| \\\notag
&\quad\leq \frac12 \mathrm{e}^{ \mathsf C_V\e\beta /2\delta }
\bigg| \big( \nabla V_\delta * (\rho_\varepsilon^+ - \rho_\varepsilon^-) \big)(\oX_{\e,i}) - \frac1n \sum_{j=1}^n b_j \nabla V_\delta (\oX_{\e,i} - \oX_{\e,j} ) \bigg|.
\end{align*}
The factor on the right--hand side inside the modulus sign is similar to the first term on the right-hand side in \eqref{est:PoC:pf1}, and following the same line of argument as in~\eqref{est:PoC1-gamma}, we estimate
\begin{align*}
\bigg| \big( \nabla V_\delta * (\rho_\varepsilon^+ &- \rho_\varepsilon^-) \big)(\oX_{\e,i}) - \frac1n \sum_{j=1}^n b_j \nabla V_\delta (\oX_{\e,i} - \oX_{\e,j} ) \bigg|\leq\\
&\leq {}\bigg| \big( \nabla V_\delta * (\mu_\varepsilon^+ - \mu_\varepsilon^-) \big)(\oX_{\e,i}) - \frac1n \sum_{j=1}^n b_j \nabla V_\delta (\oX_{\e,i} - \oX_{\e,j} ) \bigg| + 2\mathsf C_V \delta^{-1} \kappa.
\end{align*}
Now, using the function $g$ defined in~\eqref{def:g-poc}, we may estimate the second term on the right-hand side in~\eqref{ineq:MF-discrete-EZ} as
\begin{align*}
&\int_0^t \E \Big[ \sum_{\bh\in\Nhd_\e^n} \e \left|\cR^\e_{n,\bh}(\obX_\e(s), \bb) - \cR_h^\e(\oX_{\e,i}(s), b_i; \rho_\e(s)) \right| \Big] \,ds \\
  &\quad\lesssim \frac{n \kappa t }\delta
+ \int_0^t \E\biggl[  \sum_{\bh\in\Nhd_\e^n} 
   \Big| \big( \nabla V_\delta * (\mu_\varepsilon^+(s) - \mu_\varepsilon^-(s)) \big)(\oX_{\e,i}) 
     - \frac1n \sum_{j=1}^n b_j \nabla V_\delta (\oX_{\e,i} - \oX_{\e,j} ) \Big|\bigg] \, dt\\
&\quad= \frac{n \kappa t }\delta 
  + \int_0^t \E\biggl[  \sum_{\bh\in\Nhd_\e^n} 
   \Big| \frac1n \sum_{j=1}^n g(\oX_{\e,i}(s),\oX_{\e,j}(s),b_j) \Big|\bigg] \, dt.
\end{align*}
Following the same argument made in the proof of Lemma~\ref{l:PoC} after
\eqref{est:PoC:pf2}, we find that
\begin{equation} \label{est:oRW:3}
  \int_0^t \E 
  \Big[ \sum_{\bh\in\Nhd_\e^n} \e \left| \cR^\e_{n,\bh}(\obX_\e(s), \bb) - \cR_h^\e(\oX_{\e,i}(s), b_i; \rho_\e(s)) \right| \Big] \,ds
  \lesssim \frac{(n\kappa + \sqrt n) t}\delta.
\end{equation}

Finally, using the upper bounds \eqref{est:oRW:2} and \eqref{est:oRW:3} to estimate the right--hand side of \eqref{ineq:MF-discrete-EZ}, we find that 
\[
E \sum_{i=1}^n \big| X_{\e,i} -\oX_{\e,i} \big|(t)
\leq C' \frac{(n\kappa + \sqrt n)\, t}\delta + \frac{C}{\delta^2} \int_0^t \E \sum_{i=1}^n \big| X_{\e,i} -\oX_{\e,i} \big|(s) \, ds
\]
for some general constants $C$ and $C'$. The result of Lemma~\ref{l:PoCe} then follows by an application of Gronwall's Lemma.

\section{Proof of Corollaries \ref{c:RWtoMFvMFe} and \ref{c:RWtoMFvSDE}}
\label{sec:RWtoMF}
This section concerns itself with the proofs of Corollary~\ref{c:RWtoMFvMFe} and Corollary~\ref{c:RWtoMFvSDE}.

\subsection{Proof of Corollary \ref{c:RWtoMFvMFe}}

The proof is an application of the triangle inequality and the regularity estimates on the solution $\rho$ to $\MF$ in Lemma \ref{l:reg-MF}. We omit the time variable for convenience.
\begin{align*}
  \E \| \rho_{\e,n}^\pm - \rho^\pm \|_{1,\infty}^*
  \leq \E \| \rho_{\e,n}^\pm - \rho_\e^\pm \|_{1,\infty}^* + \| f_\e^\pm \nu_\e - f^\pm \nu \|_{1,\infty}^*.
\end{align*}
We use Theorem \ref{t:RWtoMFe} to estimate the first term by $R_2$. We split the second term as
\begin{align} \label{cpf:1}
  \big\| f_\e^\pm \nu_\e - f^\pm \nu \big\|_{1,\infty}^*
  \leq \big\| (f_\e^\pm - f^\pm) \nu_\e \big\|_{1,\infty}^* + \big\| f^\pm (\nu_\e - \nu) \big\|_{1,\infty}^*.
\end{align}
We employ H\"older's inequality twice and the fact that $\nu_\e$ is a probability measure to estimate the first term by
\begin{align*}
  \| (f_\e^\pm - f^\pm) \nu_\e \|_{1,\infty}^*
  &= \sup_{ \| \varphi \|_{1,\infty} \leq 1 } \int_{\T^2} (f_\e^\pm - f^\pm) \varphi \, d\nu_\e \\
  &\leq \sup_{ \| \varphi \|_{1,\infty} \leq 1 } \| \varphi \|_\infty \| f_\e^\pm - f^\pm \|_{L^1 (\nu_\e)}
  \leq \| f_\e^\pm - f^\pm \|_{L^2 (\nu_\e)} 
  \leq R_1,
\end{align*}
where we have applied Theorem \ref{t:MFetoMF} in the last inequality. Finally, we estimate the second term in \eqref{cpf:1} in an analogous way to the argument which leads up to \eqref{fvp:2infty}; this yields
\begin{align*}
\|f^\pm \nu_\e - \rho^\pm\|_{1,\infty}^*
\leq C \e \|f^\pm\|_{1,\infty}.
\end{align*}
Applying Lemma \ref{l:reg-MF} and using the given polynomial bound on $\|f^\circ\|_{1, \infty}$, we obtain
\begin{equation*}
  \|f^\pm \nu_\e - \rho^\pm\|_{1,\infty}^*
\lesssim \e \Big( \|f^\circ\|_{1, \infty}\mathrm{e}^{\gamma t} + \sqrt \beta \delta^{-3} \|f^\circ\|_\infty \sqrt t\mathrm{e}^{2 \gamma t} \Big)
\lesssim \e\mathrm{e}^{\beta \delta^{-2} T} \mathrm{e}^{2^5 \mathsf C_V \beta \delta^{-2} t},
\end{equation*}
which completes the proof.

\subsection{Proof of Corollaries \ref{c:RWtoMFvSDE}}

The proof of Corollary \ref{c:RWtoMFvSDE} is an application of the triangle inequality and the regularity estimates on the solution $\bmu$ to the Fokker-Planck equation \eqref{FP} of $\SDE$ in Corollary \ref{c:rho:est}. Throughout, we will omit the time variable for convenience. 

Our application of the triangle inequality involves the measure $\beff \bnu_\e$, which need not have mass 1. To extend $W_1$ to a metric $\tilde W_1$ on $\cM (\T^{2n})$, we note that in the definition of $W_1$ in \eqref{W1:def}, the value of $W_1$ does not change if we add a constant to $\bvarphi$. As such, since the maximal Euclidean distance between any two points on $\T^{2n}$ is $\sqrt{n/2}$, we find that
\begin{equation*}
  \tilde W_1 (\bmu, \bmu') := \sup_{ \bvarphi \in Y } \int_{\T^{2n}} \bvarphi \, d(\bmu - \bmu'),
  \quad Y := \{ \bvarphi \in W^{1,\infty} (\T^{2n}) : \| \bvarphi \|_\infty \leq \sqrt n, \ \| d \bvarphi \|_\infty \leq 1 \}
\end{equation*}
equals $W_1$ on $\cP (\T^{2n})$. Note that $\tilde W_1$ is a bounded Lipschitz norm with different constants for the bounds on the test functions.

Finally, we apply the triangle inequality:
\begin{align} \label{cpf:2}
  W_1( \bmu_\e, \brho)
  \leq \tilde W_1 \big( \beff_\e\bnu_\e ,\beff\bnu_\e \big) 
       + \tilde W_1 \big(  \beff \bnu_\e , \beff \bnu \big)
       + W_1 \big(  \bmu , \brho \big).
\end{align} 
The first term can be treated the same way as above, now relying on Theorem \ref{t:RWtoSDE}. Because of the $L^\infty$-bound on the test function by $\sqrt n$, we obtain the bound $\tilde W_1 \big( \beff_\e\bnu_\e , \beff\bnu_\e \big) \leq \sqrt n R_1$.

The second term in \eqref{cpf:2} can also be estimated similar as above, but with several minor changes from \eqref{fvp:2infty}. It goes as follows. Let $\{ \cC_i \}_{i=1}^{\e^{-2n}}$ be the tessellation of $\T^{2n}$ where $\cC_i$ are the $2n$-cubes of size $\e$ with midpoint $\bell_i \in \L^n$. Then
\begin{align} \notag
\tilde W_1 \big(  \beff \bnu_\e , \beff \bnu) \big)
&= \sup_{\bvarphi \in Y} \sum_{i=1}^{\e^{2n}} 
  \int_{\mathcal C_i} \big[ (\bvarphi \beff)(\bell_i) - (\bvarphi \beff)(\bx) \big]\, d \bx\\\notag
&\leq \sum_{i=1}^{\e^{2n}} |\cC_i| 
  \sup_{\bvarphi \in Y} \sup_{\by \in \cC_i } \frac{| (\bvarphi \beff)(\bell_i) - (\bvarphi \beff)(\by) |}{ \sum_{j=1}^n| (\bell_i)_j - y_j| } \, \sup_{\bx \in \cC_i } \Big( \sum_{j=1}^n | (\bell_i)_j - x_j| \Big) \\\notag
&\leq  \Big( \|\beff\|_{\infty} + \sqrt n \| d \beff \|_{\infty} \Big) \e n \\\label{fvp:2fatinfty}
&\lesssim \e n^{3/2} \| \beff^\circ \|_{1, \infty}\mathrm{e}^{48 \mathsf C_V^2 \beta \delta^{-2} n T}
\lesssim \e n\mathrm{e}^{49 \mathsf C_V^2 \beta \delta^{-2} n T},
\end{align}
where in the last steps we have used Corollary \ref{c:rho:est} and the polynomial bound on $\beff^\circ$.

Finally, we use Lemma \ref{l:PoC} to estimate the third term in \eqref{cpf:2}. This lemma gives a bound on
\begin{equation*}
  \frac1n \E \sum_{i=1}^n | X_i - \oX_i |
  = \frac1n \iint_{(\T^{2n})^2} \Big( \sum_{i=1}^n | x_i - \ox_i | \Big) \, d \mathbb P(\bx, \obx),
\end{equation*} 
where $\mathbb P$ is the joint probability distribution of the processes $\bX$ and $\obX$ constructed in Section \ref{s:SDEn:FP-Mark}. Alternatively, we interpret $\mathbb P$ as a coupling between $\bmu$ and $\mu^{\otimes n}$, i.e.,
\begin{equation*}
  \mathbb P 
  \in \Gamma(\bmu, \brho)
  := \left\{ \bgamma \in \cP( (\T^{2n})^2 ) : \forall \, A \subset \T^{2n} : \Big\{ \begin{array}{ll}
    \bgamma(A, \T^{2n}) 
    &= \bmu (A) \\
    \bgamma(\T^{2n}, A)  
    &= \brho (A)
  \end{array} \right\}.
\end{equation*}
The connection with the third term in \eqref{cpf:2} is as follows; using the  Kantorovich duality (see e.g. \cite[(7.1.2)]{AmbrosioGigliSavare08}), we obtain
\begin{multline*}
  W_1 (\bmu, \brho )
  = \inf_{ \bgamma \in \Gamma(\bmu, \mu^{\otimes n}) } \iint_{(\T^{2n})^2} \Big( \sum_{i=1}^n | x_i - \ox_i | \Big) \, d \bgamma (\bx, \obx) \\
  \leq \iint_{(\T^{2n})^2} \Big( \sum_{i=1}^n | x_i - \ox_i | \Big) \, d \mathbb P(\bx, \obx)
  = \E \sum_{i=1}^n |X_i - \overline X_i|,
\end{multline*}
to which Lemma \ref{l:PoC} applies directly.  

\appendix
\section{Norms and function spaces}
\label{app:function-spaces}


\paragraph{Vectors and tensors.}
For the definition of the norms of vectors, matrices, and higher-order tensors, we interpret vectors as linear maps from $\R^{2n}$ to $\R$, matrices as bilinear maps from $\R^{2n}\times \R^{2n}$ to~$\R$, and general $k$-tensors as multilinear maps from $(\R^{2n})^k$ to $\R$. We write $U_k$ for the space of $k$-tensors on $\R^{2n}$ (which is viewed as the tangent space to $\T^{2n}$); $U_0 = \R$ is the space of scalars, $U_1$ the space of vectors, $U_2$ the space of matrices, etc. The norm of a $k$--tensor $\mathsf K\in U_k$ is defined by duality as
\[
|\mathsf K| := |\mathsf K|_{U_k} := \sup\Big\{ \mathsf K[\by_1,\dots,\by_k] : \by_i = (y_{i,1},\ldots,y_{i,n})\in(\R^2)^n,\sum_{j=1}^n |y_{i,j}|_{\R^2} \leq 1, \forall i \Big\}.
\]
When $n=1$, this reduces to the Euclidean norm on $\R^2$, and for the space of matrices $\R^{2 \times 2}$ to the spectral norm, which also is the operator norm as operator on $\R^2$ endowed with the Euclidean norm. For $n>1$, this norm acts as the maximum of such norms over sub-tensors corresponding to fixed indices, e.g., for $\mathsf K=(\mathsf K_i)_{i=1}^n \in U_1$ with $\mathsf K_i \in \R^2$, and $\mathsf L = (\mathsf L_{ij})_{i,j=1}^n\in U_2$ with $\mathsf L_{ij}\in\R^{2\times 2}$, we have
\begin{equation*}
  |\mathsf K| := \max_{i=1,\ldots,n} |\mathsf K_i|_{\R^2},\quad\text{and}\quad |\mathsf L| := \max_{i,j=1,\ldots,n} |\mathsf L_{ij}|_{\R^{2\times 2}}.
\end{equation*}

\paragraph{The norms $\|\cdot\|_{k,\infty}$.} The norm we will generally use for functions taking values in spaces of tensors $g:\T^{2n}\to U_k$ is the supremum norm, which we define to be 
\begin{equation*} 
  \|g\|_\infty  := \sup_{\bx\in \T^{2n}} \big|g(\bx)\big|_{U_k}.  
\end{equation*}
Abstractly, differentiation is viewed as a map from $U_k$--valued functions to $U_{k+1}$--valued functions, indicated with the letter $d$:
\[
dg(\bx)[\by_1,\dots,\by_k,\by_{k+1}] := 
  \lim_{h\to0} \frac{g(\bx+h\by_{k+1})[\by_1,\dots,\by_k]-g(\bx)[\by_1,\dots,\by_k]}h.
\]
We use this to define for a  function $g:\T^{2n} \to U_k$ and for an integer $m\geq0$, 
\[
\big\|d^m g\big\|_\infty := \sup_{\bx\in \T^{2}} \big|d^m g(\bx)\big|_{U_{k+m}} .
\]
With this notation, the chain rule estimate applies with constant one; for instance, for any $g \in C^1(\T^{2n}; U_k)$ we will often use the inequality
\[
|g(\bx)-g(\by)|
\leq \|dg\|_\infty \sum_{i=1}^n d_{\T^2}(x_i , y_i)
\qquad\text{for any sufficiently smooth $U_k$-valued $g$}.
\]
For an integer $k\geq 0$ and for a function $g:\T^{2n}\to\R^d$, set 
\[
\|g\|_{k,\infty} := \sum_{\ell=0}^k \|d^\ell g\|_\infty.
\]
These norms can be used to define the Banach spaces of $k$--times weakly--differentiable functions defined on $\T^{2n}$, whose $k$th--weak derivative is essentially bounded. These spaces are denoted $W^{k,\infty}$; setting $W^{0,\infty} = L^\infty$, we say that $\varphi \in L^\infty$ is in $W^{k,\infty}$ for $k \geq 1$ if $\varphi$ is $k$--times weakly differentiable and
  \begin{equation*}
    \|\varphi\|_{k,\infty}
    := \sum_{m=0}^{k}\|d^m\varphi\|_{\infty}
    < +\infty.
  \end{equation*}
  We note that the spaces $W^{k,\infty}$ may be identified with the H\"older spaces $C^{k-1,1}$ through an application of Rademacher's Theorem.

In analogy with the case above where the domain of functions considered is $\T^{2n}$, we will use various natural generalizations of the norms above to other domains; for instance, for $g:\L^n\to U_k$, 
\[
\|g\|_\infty := \sup_{\bl\in \L^n} |g(\bl)|_{U_k},
\]
and for $g: \T^2_\pm \to U_k$, 
\[
\|dg\|_\infty := \sup_{(x,b)\in \T^2_\pm} |dg(x,b)|_{U_{k+1}}.
\]
For first derivatives of a function $g$ we will often use the traditional notation $\nabla g$, for which by the construction above we have 
\[
\|\nabla g\|_\infty = \|d g\|_\infty.
\]

\paragraph{The dual norms $\|\cdot\|_{k,\infty}^*$.} As noted above $W^{k,\infty}$ is isomorphic to $C^{k-1,1}$. Since $C^{k-1,1}$ is contained in $C^{0,1}$, and the dual space $(C^{0,1})^*$ may be identified with the space of measures $\mathcal{M}_+$, it follows that the norms $\|\cdot\|_{k,\infty}$ naturally induce a topology on $\mathcal{M}_+$ which is dual to that on $W^{k,\infty}$. The resulting metric will be essential for describing convergence in the measure theoretic framework that we use.

For $k \geq 1$ and $\mu\in \mathcal M_+ \subset (W^{k,\infty})^*$, we define the  dual norm in the usual way,
\[
\|\mu\|_{k,\infty}^* 
:= \sup_{\|\varphi\|_{k,\infty} \leq 1} \int_{\T^2} \varphi(x) \mu(dx).
\]
We remark that $\| \cdot \|_{1,\infty}^*$ is often referred to as the dual bounded Lipschitz norm, and the resulting dual space norm metrizes the narrow topology in the space of finite non--negative measures, which is alternatively characterized by convergence against continuous and bounded functions~\cite[Th.~8.3.2]{Bogachev07.II}.

\paragraph{Other $L^p$ spaces.}
The usual Lebesgue spaces with respect to a given measure $\mu\in\mathcal{M}_+(A)$ are
denoted by $L^p(\mu)$, which is a Banach space with norm 
\begin{equation*}
  \| f \|_{L^p(\mu)}^p := \int_A |f|^p \,\dd \mu \qquad \text{for } p = 1,2.
\end{equation*}

\paragraph{Acknowledgments}
TH gratefully acknowledge support from an Early Career Fellowship awarded by the Leverhulme Trust (ECF-2016-526).

PvM gratefully acknowledges support from the International Research Fellowship of the Japanese Society for the Promotion of Science and the associated JSPS KAKENHI grant 15F15019.

MAP gratefully acknowledges support from NWO project 613.001.552, Large Deviations and Gradient Flows: Beyond Equilibrium.

\newcommand{\etalchar}[1]{$^{#1}$}

\end{document}